\documentclass[pdflatex,sn-mathphys-num]{sn-jnl}

\usepackage{graphicx}%
\usepackage{multirow}%
\usepackage{amsmath,amssymb,amsfonts}%
\usepackage{amsthm}%
\usepackage{mathrsfs}%
\usepackage[title]{appendix}%
\usepackage{xcolor}%
\usepackage{textcomp}%
\usepackage{manyfoot}%
\usepackage{booktabs}%
\usepackage{algorithm}%
\usepackage{algorithmicx}%
\usepackage{algpseudocode}%
\usepackage{listings}%

\usepackage{rotating}

\usepackage{tabularx}
\usepackage{array}

\usepackage{algorithm}
\usepackage{algpseudocode}

\algrenewcommand\algorithmicrequire{\textbf{Input:}}
\algrenewcommand\algorithmicensure{\textbf{Output:}}

\theoremstyle{thmstyleone}%
\newtheorem{theorem}{Theorem}
\theoremstyle{thmstyletwo}%
\newtheorem{remark}{Remark}%

\newtheorem{lemma}{Lemma}
\newtheorem{corollary}[lemma]{Corollary}

\DeclareMathOperator{\Exp}{Exp}
\DeclareMathOperator{\Log}{Log}

\theoremstyle{thmstylethree}%
\newtheorem{definition}{Definition}%

\theoremstyle{thmstylethree}
\newtheorem{assumption}{Assumption}[section]

\begin{document}

\title[Centered Weak Discrete Riemannian Gradients: A Unified Framework for Riemannian Optimization]{Centered Weak Discrete Riemannian Gradients: A Unified Framework for Riemannian Optimization}


\author*[1,2]{\fnm{Derun} \sur{Zhou}}\email{zhouderun@nii.ac.jp}

\affil[1]{\orgdiv{National Institute of Informatics}, \orgaddress{\street{Hitotsubashi}, \city{Chiyoda-ku}, \postcode{101-8430}, \state{Tokyo}, \country{Japan}}}


\affil[2]{\orgdiv{The Graduate University for Advanced Studies}, \orgname{SOKENDAI}, \orgaddress{\street{Shonan Village}, \city{Hayama}, \postcode{240-0193}, \state{Kanagawa}, \country{Japan}}}


\abstract{
We introduce the centered weak discrete Riemannian gradient (c-wDRG)
framework for the unified analysis of optimization methods on
Riemannian manifolds. The framework uses a center point to represent
the relevant logarithmic differences in a common tangent space and
covers Riemannian steepest descent, proximal point, proximal gradient,
implicit midpoint, geodesic average-vector-field, Gonzalez, and
Itoh--Abe methods. We derive c-wDRG certificates for these schemes and
establish curvature-aware convergence results using explicit metric
distortion bounds. The framework yields sublinear and linear
convergence for nonaccelerated methods in the geodesically convex and
strongly convex settings, respectively. We further develop accelerated
c-wDRG schemes, obtaining accelerated sublinear convergence in the
convex case and accelerated linear convergence in the strongly convex
case. Our results provide explicit curvature-dependent stepsize
conditions and convergence rates within a common framework for
Riemannian first-order and discrete-gradient methods.
}

\keywords{
Geodesic convexity optimization,
Riemannian accelerated optimization,
Riemannian optimization
}



\maketitle

\section{Introduction}

Optimization on Riemannian manifolds extends classical optimization
methods to problems whose variables possess intrinsic nonlinear
geometry. Even for geodesically convex objectives, however, transferring
Euclidean convergence arguments to manifolds is not straightforward.
The basic difficulty is that displacement vectors based at different
points belong to different tangent spaces, while curvature distorts the
relation between tangent-space quantities and geodesic distances.
Zhang and Sra~\cite{zhang2016first} established global iteration-complexity
bounds for first-order methods on Hadamard manifolds and showed explicitly
that sectional curvature affects their convergence rates. Thus, already
for nonaccelerated first-order optimization, geometry enters not merely
through the formulation of the algorithm but also through its convergence
analysis.

A natural next question is whether the acceleration familiar from
Euclidean convex optimization can survive this geometric distortion. Liu et al.~\cite{liu2017accelerated} made an early step in this direction by generalizing Nesterov's accelerated method to geodesically convex
optimization. Instead of the linear extrapolation used in Euclidean space, they introduced nonlinear operators on the manifold and derived accelerated rates for both geodesically convex and strongly convex
objectives. Their construction, however, requires the exact solution of nontrivial nonlinear equations in the accelerated step, highlighting the additional difficulty of making Riemannian acceleration both
theoretically valid and computationally tractable.
Zhang and Sra~\cite{zhang2018estimate} introduced a Riemannian accelerated
gradient method and obtained accelerated convergence in a neighborhood
of the minimizer by combining a Riemannian estimate sequence with a
bound on nonlinear metric distortion. Kim and Yang~\cite{KimYang2022}
subsequently developed computationally tractable accelerated methods
for geodesically convex and strongly convex optimization, using
metric-distortion inequalities and carefully designed potential
functions to obtain Euclidean-type iteration complexities. More
recently, Feng et al.~\cite{feng2025riemannian} extended this line
of research to composite objectives with a possibly nonsmooth
$\rho$-retraction-convex term. These developments progressively clarify
how acceleration can be achieved on manifolds. At the same time, their
analyses are built around particular algorithmic constructions, so the
convergence mechanism must still be established separately when the
underlying discretization is changed.

A complementary perspective comes from geometric numerical integration.
Discrete gradient methods are designed to reproduce, at the discrete
level, structural identities of continuous differential equations.
Celledoni et al.~\cite{Celledoni2018,Celledoni2020} generalized this
principle intrinsically to Riemannian manifolds through discrete
Riemannian gradients, retractions, and center functions. Their
constructions include Riemannian versions of the average-vector-field
and Itoh--Abe discrete gradients and yield structure-preserving
discretizations of gradient systems. This provides a flexible
discretization framework on manifolds, but its primary objective is
the preservation or dissipation of geometric quantities rather than a
common convergence-rate theory for convex optimization.

In Euclidean convex optimization, Ushiyama et
al.~\cite{Ushiyama2023} addressed precisely this gap between flexible
discretization and unified convergence analysis. They introduced the
weak discrete gradient (wDG), which relaxes the exact discrete chain
rule to an inequality characterized by a small number of parameters.
Once a method is represented by such a certificate, its convergence
can be derived from a common Lyapunov argument rather than from a
method-specific proof. In this way, classical optimization methods and
structure-preserving discretizations can be treated within the same
framework.

This suggests a natural question: can the weak discrete-gradient
principle be extended to Riemannian optimization so that both
structure-preserving discretizations and modern first-order methods
admit a common convergence theory? A direct extension is obstructed by
two geometric issues. First, the Euclidean weak-DG argument repeatedly
combines displacement vectors, whereas logarithmic vectors on a
manifold generally belong to different tangent spaces. Second, even
after these vectors are represented in a common tangent space, their
squared norms do not coincide with the geodesic distances appearing in
Lyapunov functions; the discrepancy is controlled by curvature. Thus, a Riemannian counterpart of the wDG framework must address two geometric difficulties simultaneously: it must place the relevant
displacements in a common tangent space so that the discrete algebra remains valid, and it must control the curvature-induced discrepancy between tangent-space norms and Riemannian distances.

To this end, we introduce the \emph{centered weak discrete Riemannian
gradient} (c-wDRG) framework. For two endpoints of an update, we choose
a center point $c$ and represent all relevant logarithmic differences
in the common tangent space $T_c\mathcal M$. This restores the additive
identity required by weak discrete-gradient arguments, while the
remaining discrepancy between tangent-space norms and Riemannian
distances is treated separately through curvature-dependent comparison
estimates. Consequently, the analysis is divided into two layers:
method-specific information is encoded in a c-wDRG certificate, whereas
the Lyapunov and curvature arguments are handled at the level of the
abstract certificate. Strict discrete Riemannian gradients arise as a
special case, thereby connecting classical Riemannian first-order
methods with the structure-preserving constructions of geometric
integration.

Our main contributions are as follows.
\begin{itemize}

    \item We introduce the c-wDRG framework and show that a broad class
    of Riemannian optimization methods can be represented by explicit
    certificates. These include Riemannian steepest descent, proximal
    point, proximal gradient, implicit midpoint, geodesic
    average-vector-field, Gonzalez, and Itoh--Abe methods.
    The c-wDRG framework is introduced in
    Section~\ref{sec1}, the concrete gradient approximations are
    given in Definition~\ref{def:structure-preserving-methods}, and their c-wDRG
    certificates are established in
    Theorem~\ref{thm:structure-preserving-certificates}.

    \item We develop a unified curvature-aware convergence theory for
    nonaccelerated c-wDRG methods. The key step is a
    center-compatible curvature correction that converts the
    tangent-space Lyapunov relation into one expressed in Riemannian
    distances. This yields an $O(1/k)$ rate for geodesically convex
    objectives and linear convergence for geodesically strongly convex
    objectives. The curvature correction is established in
    Corollary~\ref{cor:center-compatible-curvature-correction}, the general
    Lyapunov contraction in Theorem~\ref{thm:unified-wdrg-estimate},
    and the convex and strongly convex convergence rates in
    Corollaries~\ref{cor:convex-rate}
    and~\ref{cor:strongly-convex-rate}, respectively.
    The optimized method-specific stepsizes and contraction factors are
    summarized in Table~\ref{tab:best-step-rate}.

    \item We further show that the same c-wDRG certificates can be
    accelerated without developing a separate accelerated analysis for
    each underlying discretization.
    Lemma~\ref{thm:endpoint-reduction} reduces a general centered
    c-wDRG update to a common endpoint model, which is then combined
    with metric-distortion estimates and a unified potential argument.
    The resulting accelerated schemes are given in
    Algorithms~\ref{alg:accelerated-cwdrg-convex}
    and~\ref{alg:accelerated-cwdrg-strong}.
    Their convergence guarantees are established in
    Theorems~\ref{thm:cwdrg-accelerated-convex}
    and~\ref{thm:cwdrg-accelerated-strong},
    yielding an $O(1/k^2)$ rate in the geodesically convex setting and
    an accelerated linear rate in the geodesically strongly convex
    setting, respectively.
    The corresponding optimized strongly convex parameters are
    summarized in Table~\ref{tab:optimal-accelerated-strongly-convex-rates}.

\end{itemize}

The remainder of this paper is organized as follows.
Section~\ref{sec:riemannian-preliminaries} reviews the necessary background, and
Section~\ref{sec:assumptions} states the standing assumptions.
Section~\ref{sec1} introduces the c-wDRG framework, and
Section~\ref{sec:examples} derives c-wDRG certificates for the
methods considered in this paper.
Section~\ref{sec:convergence} establishes the nonaccelerated and
accelerated convergence results.
Section~\ref{sec:conclusion} concludes the paper, while technical
proofs and optimized method-specific convergence rates are deferred to
the appendices.

\section{Preliminaries on Riemannian Geometry}
\label{sec:riemannian-preliminaries}

We briefly recall the basic notions of Riemannian geometry and
geodesic convexity used throughout the paper.

Let $(\mathcal M,g)$ be a finite-dimensional Riemannian manifold.
For each $x\in\mathcal M$, let $T_x\mathcal M$ denote the tangent space
at $x$. The Riemannian metric induces the inner product and norm
\[
\langle u,v\rangle_x:=g_x(u,v),
\qquad
\|v\|_x:=\sqrt{\langle v,v\rangle_x},
\qquad
u,v\in T_x\mathcal M,
\]
and the tangent bundle is
$T\mathcal M:=\bigsqcup_{x\in\mathcal M}T_x\mathcal M$.

For a piecewise smooth curve $\gamma:[a,b]\to\mathcal M$, its length,
the Riemannian distance, and the diameter of a nonempty set
$\mathcal X\subseteq\mathcal M$ are defined, respectively, by
\[
\ell(\gamma):=\int_a^b\|\dot\gamma(t)\|_{\gamma(t)}\,dt,
\qquad
d(x,y):=\inf_{\gamma:x\to y}\ell(\gamma),
\qquad
\operatorname{diam}(\mathcal X):=
\sup_{x,y\in\mathcal X}d(x,y).
\]

Let $\nabla$ denote the Levi--Civita connection. A smooth curve
$\gamma$ is a geodesic if $\nabla_{\dot\gamma}\dot\gamma=0$.
For $x\in\mathcal M$ and $v\in T_x\mathcal M$, the exponential map is
defined by
\[
\Exp_x(v):=\gamma_v(1),
\]
where $\gamma_v$ is the geodesic satisfying
$\gamma_v(0)=x$ and $\dot\gamma_v(0)=v$. Whenever $\Exp_x$ is
invertible on the relevant domain, its inverse is denoted by $\Log_x$.
If $x$ and $y$ are joined by a unique minimizing geodesic, we write
\[
\gamma_{x,y}(t)
:=
\Exp_x\bigl(t\Log_x y\bigr),
\qquad
t\in[0,1],
\qquad
d(x,y)=\|\Log_x y\|_x.
\]

For such $x$ and $y$, let
$P_{x\to y}:T_x\mathcal M\to T_y\mathcal M$ denote parallel transport
along the unique minimizing geodesic. It is a linear isometry satisfying
\[
\langle P_{x\to y}u,P_{x\to y}v\rangle_y
=
\langle u,v\rangle_x,
\qquad
\|P_{x\to y}v\|_y=\|v\|_x,
\qquad
P_{y\to x}=P_{x\to y}^{-1}.
\]

For a differentiable function $f:\mathcal M\to\mathbb R$, its
Riemannian gradient $\operatorname{grad}f(x)\in T_x\mathcal M$ is
defined by
\[
Df(x)[v]
=
\langle\operatorname{grad}f(x),v\rangle_x,
\qquad
v\in T_x\mathcal M.
\]
If $f$ is twice differentiable, its Riemannian Hessian is
\[
\operatorname{Hess}f(x)[v]
:=
\nabla_v\operatorname{grad}f,
\]
and, for any smooth curve $\gamma$,
\[
\frac{d}{dt}f(\gamma(t))
=
\left\langle
\operatorname{grad}f(\gamma(t)),\dot\gamma(t)
\right\rangle_{\gamma(t)}.
\]
For linearly independent $u,v\in T_x\mathcal M$,
$\operatorname{Sec}_x(u,v)$ denotes the sectional curvature of the
two-dimensional subspace spanned by $u$ and $v$; when no ambiguity
arises, we simply write $\operatorname{Sec}$.

A set $\mathcal X\subseteq\mathcal M$ is called
\emph{geodesically convex} if every $x,y\in\mathcal X$ can be joined
by a minimizing geodesic contained in $\mathcal X$, and
\emph{geodesically uniquely convex} if this minimizing geodesic is
unique.

A function $f:\mathcal X\to\mathbb R$ is called
\emph{geodesically convex} if
\[
f\bigl(\gamma_{x,y}(t)\bigr)
\leq
(1-t)f(x)+tf(y),
\qquad
x,y\in\mathcal X,\quad t\in[0,1].
\]
For differentiable $f$, this is equivalently characterized by
\begin{equation}
\label{eq:geodesic-convexity}
f(y)
\geq
f(x)
+
\left\langle
\operatorname{grad}f(x),\Log_x y
\right\rangle_x,
\qquad
x,y\in\mathcal X.
\end{equation}

More generally, $f$ is
\emph{$\mu$-strongly geodesically convex}, with $\mu\geq0$, if
\begin{equation}
\label{eq:strong-geodesic-convexity}
f(y)
\geq
f(x)
+
\left\langle
\operatorname{grad}f(x),\Log_x y
\right\rangle_x
+
\frac{\mu}{2}d^2(x,y),
\qquad
x,y\in\mathcal X.
\end{equation}
The case $\mu=0$ reduces to geodesic convexity.

A differentiable function $f:\mathcal X\to\mathbb R$ is called
\emph{$L$-smooth} if its Riemannian gradient is $L$-Lipschitz with
respect to parallel transport:
\begin{equation}
\label{eq:riemannian-L-smoothness}
\left\|
P_{x\to y}\operatorname{grad}f(x)
-
\operatorname{grad}f(y)
\right\|_y
\leq
L\,d(x,y),
\qquad
x,y\in\mathcal X.
\end{equation}
This condition implies the Riemannian descent lemma
\begin{equation}
\label{eq:riemannian-descent-lemma}
f(y)
\leq
f(x)
+
\left\langle
\operatorname{grad}f(x),\Log_x y
\right\rangle_x
+
\frac{L}{2}d^2(x,y),
\qquad
x,y\in\mathcal X.
\end{equation}

\section{Assumptions}
\label{sec:assumptions}

We impose the following assumptions throughout the paper.

\begin{assumption}
\label{ass:geometry}
The domain $\mathcal X$ is an open geodesically uniquely convex subset
of $\mathcal M$. The diameter of the domain is bounded as
\[
\operatorname{diam}(\mathcal X)\leq D<\infty.
\]
The sectional curvature inside $\mathcal X$ is bounded below by
$K_{\min}$ and bounded above by $K_{\max}$, i.e.,
\begin{equation}
\label{eq:sectional-curvature-bound}
K_{\min}
\leq
\operatorname{Sec}
\leq
K_{\max}.
\end{equation}
Associated with the curvature and diameter bounds, define
\begin{equation}
\label{eq:curvature-comparison-constants}
\begin{aligned}
\delta_D
&:=
\begin{cases}
1,
& K_{\max}\leq0,\\[1mm]
\sqrt{K_{\max}}D
\cot\!\left(\sqrt{K_{\max}}D\right),
& K_{\max}>0,
\end{cases}
\\[2mm]
\zeta_D
&:=
\begin{cases}
\sqrt{-K_{\min}}D
\coth\!\left(\sqrt{-K_{\min}}D\right),
& K_{\min}<0,\\[1mm]
1,
& K_{\min}\geq0.
\end{cases}
\end{aligned}
\end{equation}
If $K_{\max}>0$, we further assume that
\begin{equation}
\label{eq:diameter-condition}
D<
\begin{cases}
\dfrac{\pi}{\sqrt{K_{\max}}},
& \theta=0,\\[2mm]
\dfrac{\kappa_\theta}{\sqrt{K_{\max}}},
& 0<\theta\leq1.
\end{cases}
\end{equation}
where $\theta$ specifies the center
$c=\gamma_{z,y}(\theta)$ along the geodesic between the two endpoints,
and $\kappa_\theta\in(0,\pi)$ is the unique solution of
$
\kappa_\theta\cot(\kappa_\theta)
=
1-\frac{1}{2\theta}.
$
For Riemannian gradient descent and the Riemannian Itoh--Abe method,
$\theta=0$, therefore the original condition
$D<\pi/\sqrt{K_{\max}}$ is retained.
For the midpoint and Gonzalez methods, $\theta=1/2$, and hence
$\kappa_{1/2}=\pi/2$, yielding
$D<\pi/(2\sqrt{K_{\max}})$. Moreover, for the proximal-point method, $\theta=1$, so that
$\kappa_1\cot(\kappa_1)=1/2$ and hence
$D<\kappa_1/\sqrt{K_{\max}}$, where
$\kappa_1\approx1.16556$. Furthermore,
Assumption~\ref{ass:geometry} ensures that the exponential map
$\Exp_x$ is a diffeomorphism on the relevant domain for every
$x\in\mathcal X$, so that the logarithm maps and parallel transports
used below are well-defined \cite{KimYang2022}.

\end{assumption}

\begin{assumption}
\label{ass:objective}
The objective function $f:\mathcal X\to\mathbb R$ is continuously
differentiable and geodesically $L$-smooth. Moreover, $f$ is bounded
below and has minimizers, all of which lie in $\mathcal X$.
A global minimizer is denoted by $x^\star$.
\end{assumption}

\begin{assumption}
\label{ass:iterates}
All iterates and auxiliary points generated by the algorithms are
well-defined on the manifold $\mathcal M$ and remain in $\mathcal X$.
\end{assumption}

\section{Centered Weak Discrete Riemannian Gradients}\label{sec1}

We now extend the weak discrete-gradient framework of
Ushiyama et al.~\cite{Ushiyama2023} to Riemannian manifolds by
introducing a center point that provides a common tangent space for the
two endpoints of each update.

A center map assigns a point $c=c(y, z) \in \mathcal{X}$ to every pair $(y, z)$. Define
$$
\Delta_c(x, y):=\log _c x-\log _c y \in T_c \mathcal{M} .
$$
Because all logarithmic vectors lie in the same vector space $T_c \mathcal{M}$,
$$
\Delta_c(y, x)=\Delta_c(y, z)+\Delta_c(z, x) .
$$
In Euclidean space, $\Delta_c(a, b)=a-b$ independently of the center. A map
$$
\mathcal{G}_f(y, z ; c) \in T_c \mathcal{M}
$$
is a centered weak discrete Riemannian gradient (c-wDRG) with parameters $(\alpha, \beta, \gamma)$ if 
$$
\mathcal{G}_f(x, x ; x)=\operatorname{grad} f(x),
$$
and, for every $x, y, z \in \mathcal{X}$,
\begin{equation}
\label{eq:c-wdrg}
f(y)-f(x)
\leq
\left\langle \mathcal{G}_f(y,z;c),\Delta_c(y,x)\right\rangle_c
+\alpha\|\Delta_c(y,z)\|_c^2
-\beta\|\Delta_c(z,x)\|_c^2
-\gamma\|\Delta_c(y,x)\|_c^2.
\end{equation}
Taking $x=z$ in \eqref{eq:c-wdrg} gives
\begin{equation}
\label{eq:c-wdrg-chain-rule}
f(y)-f(z)
\leq
\left\langle \mathcal{G}_f(y,z;c),\Delta_c(y,z)\right\rangle_c
+(\alpha-\gamma)\|\Delta_c(y,z)\|_c^2.
\end{equation}

If instead the exact identity
\[
f(y)-f(z)
=
\left\langle
\mathcal G_f(y,z;c),
\Delta_c(y,z)
\right\rangle_c
\]
holds, then $\mathcal G_f$ is called a
\emph{strict centered discrete Riemannian gradient}.
This notion recovers the discrete Riemannian-gradient framework of
Celledoni et al.~\cite{Celledoni2018,Celledoni2020}, in which the exact
discrete chain rule is the defining structure-preserving property.
In particular, standard discrete Riemannian gradients such as the
geodesic average-vector-field (AVF), Riemannian Gonzalez discrete gradient and Riemannian Itoh--Abe constructions
satisfy this identity.

\section{Examples of Centered Weak Discrete Riemannian Gradients}
\label{sec:examples}
We first introduce several methods that can be embedded into the c-wDRG framework. For convenience of convergence analysis, we specialize the general
discrete Riemannian-gradient construction of
Celledoni et al.~\cite{Celledoni2020}, which allows substantial freedom
in the choice of both the retraction and the center function. Here, we use the exponential map as the retraction and
restrict the center to the geodesic form
$c_\theta=\gamma_{z,y}(\theta)$, with $\theta=0$ for steepest descent
and Itoh--Abe, $\theta=1$ for proximal point, $\theta=1/2$ for
midpoint and Gonzalez, and an admissible $\theta\in[0,1]$ for AVF.
These choices facilitate the derivation of explicit c-wDRG
certificates and convergence rates without changing the 
discrete-gradient structure of the strict methods.

\begin{definition}
\label{def:structure-preserving-methods}
Let $x,y,z\in\mathcal{X}$. We consider the following gradient approximations.

\begin{enumerate}

\item[(i)] \textbf{Riemannian steepest descent.}
Choose $c=z$ and define
\begin{equation}
\mathcal{G}^{\mathrm{SD}}_f(y,z;z)
:=
\operatorname{grad}f(z).
\label{eq:sd-gradient}
\end{equation}

\item[(ii)] \textbf{Riemannian proximal point.}
Choose $c=y$ and define
\begin{equation}
\mathcal{G}^{\mathrm{PP}}_f(y,z;y)
:=
\operatorname{grad}f(y).
\label{eq:pp-gradient}
\end{equation}

\item[(iii)] \textbf{Riemannian proximal gradient.}
Consider the composite objective
\[
F:=f+\psi,
\]
where $f:\mathcal X\to\mathbb R$ is geodesically $L$-smooth and
$\mu$-strongly geodesically convex, and
$\psi:\mathcal X\to\mathbb R$ is locally Lipschitz and
$\rho_\psi$-retraction-convex with respect to the exponential map,
with
\[
0\leq\rho_\psi<\mu.
\]

More precisely, $\psi$ is said to be
$\rho_\psi$-retraction-convex with respect to the exponential map if,
for every $z\in\mathcal X$ and every convex set
$U\subseteq T_z\mathcal M$ satisfying
$\Exp_z(U)\subseteq\mathcal X$, the function
\[
\widetilde\psi_z(\eta)
:=
\psi\!\left(\Exp_z(\eta)\right)
+
\frac{\rho_\psi}{2}\|\eta\|_z^2,
\qquad
\eta\in U,
\]
is convex. Equivalently, for every $\eta,\omega\in U$ and
$g\in\partial\widetilde\psi_z(\omega)$,
\begin{equation}
\label{eq:rho-retraction-convex}
\widetilde\psi_z(\eta)
\geq
\widetilde\psi_z(\omega)
+
\left\langle
g,\eta-\omega
\right\rangle_z.
\end{equation}

Choose $c=z$. For a parameter $h>0$, define the local model
\begin{equation}
\label{eq:pg-local-model}
\ell_{z,h}(\eta)
:=
\left\langle
\operatorname{grad}f(z),\eta
\right\rangle_z
+
\frac{1}{2}
\left(
\rho_\psi+\frac{1}{h}
\right)
\|\eta\|_z^2
+
\psi\!\left(\Exp_z(\eta)\right),
\qquad
\eta\in T_z\mathcal M.
\end{equation}
Equivalently,
\[
\ell_{z,h}(\eta)
=
\left\langle
\operatorname{grad}f(z),\eta
\right\rangle_z
+
\frac{1}{2h}\|\eta\|_z^2
+
\widetilde\psi_z(\eta).
\]

Let $\eta_z\in T_z\mathcal M$ be a stationary point of
$\ell_{z,h}$, i.e.,
\begin{equation}
\label{eq:pg-stationarity}
0
\in
\operatorname{grad}f(z)
+
\frac{1}{h}\eta_z
+
\partial\widetilde\psi_z(\eta_z).
\end{equation}
Set
\begin{equation}
\label{eq:pg-step}
y:=\Exp_z(\eta_z),
\end{equation}
and define
\begin{equation}
\label{eq:pg-gradient}
\mathcal G_{F,h}^{\mathrm{PG}}(y,z;z)
:=
-\frac{1}{h}\eta_z.
\end{equation}

\item[(iv)] \textbf{Riemannian implicit midpoint rule.}
Let
\[
m:=\operatorname{mid}(y,z)
\]
be the midpoint of the selected shortest geodesic from $z$ to $y$.
Choose $c=m$ and define
\begin{equation}
\mathcal{G}^{\mathrm{MP}}_f(y,z;m)
:=
\operatorname{grad}f(m).
\label{eq:mp-gradient}
\end{equation}

    \item[(v)] \textbf{Geodesic AVF discrete gradient.}
    Let $\gamma_{z,y}:[0,1]\to\mathcal{X}$ be the constant-speed shortest
    geodesic from $z$ to $y$, and choose
    \[
    c_\theta:=\gamma_{z,y}(\theta),
    \qquad
    0\leq\theta\leq1.
    \]
    Then
    \[
    \Log_{c_\theta}z
    =
    -\theta\,\Delta_{c_\theta}(y,z),
    \qquad
    \Log_{c_\theta}y
    =
    (1-\theta)\,\Delta_{c_\theta}(y,z).
    \]
    For $t\in[0,1]$, define
    \[
    u_t
    :=
    (1-t)\Log_{c_\theta}z
    +
    t\Log_{c_\theta}y
    =
    (t-\theta)\Delta_{c_\theta}(y,z).
    \]
    Therefore,
    \[
    \Exp_{c_\theta}(u_t)
    =
    \gamma_{z,y}(t).
    \]
    The geodesic AVF discrete gradient is defined by
    \begin{equation}
    \mathcal{G}^{\mathrm{AVF},\theta}_f(y,z;c_\theta)
    :=
    \int_0^1
    \left(
    \mathrm{D}\Exp_{c_\theta}(u_t)
    \right)^{\ast}
    \operatorname{grad}f\!\left(\Exp_{c_\theta}(u_t)\right)\,dt,
    \label{eq:avf-gradient}
    \end{equation}
    where
    \[
    \left(
    \mathrm{D}\Exp_{c_\theta}(u_t)
    \right)^{\ast}
    :
    T_{\gamma_{z,y}(t)}\mathcal{X}
    \to
    T_{c_\theta}\mathcal{X}
    \]
    denotes the adjoint of the differential of the exponential map.

\item[(vi)] \textbf{Riemannian Gonzalez discrete gradient.}
Let
\[
m:=\operatorname{mid}(y,z),
\qquad
w:=\Delta_m(y,z).
\]
For $y\neq z$, define
\begin{equation}
\mathcal{G}^{\mathrm{G}}_f(y,z;m)
:=
\operatorname{grad}f(m)
+
\frac{
f(y)-f(z)
-
\left\langle
\operatorname{grad}f(m),w
\right\rangle_m
}{
\|w\|_m^2
}\,w,
\label{eq:gonzalez-gradient}
\end{equation}
with the continuous extension
\[
\mathcal{G}^{\mathrm{G}}_f(x,x;x)
=
\operatorname{grad}f(x).
\]

\item[(vii)] \textbf{Riemannian Itoh--Abe discrete gradient.}
Choose the center $c=z$ and let
\[
\{E_1,\ldots,E_d\}
\]
be an orthonormal basis of $T_z\mathcal{M}$. Write
\[
\Log_z y
=
\sum_{j=1}^d s_jE_j,
\]
and define
\[
u^{[0]}:=0,
\qquad
u^{[j]}:=\sum_{i=1}^j s_iE_i,
\qquad
w^{[j]}:=\Exp_z\bigl(u^{[j]}\bigr),
\qquad
j=1,\ldots,d.
\]
Thus
\[
w^{[0]}=z,
\qquad
w^{[d]}=y.
\]
For each $j=1,\ldots,d$, define the coordinate quotient
\begin{equation}
a_j(y,z)
:=
\begin{cases}
\dfrac{f(w^{[j]})-f(w^{[j-1]})}{s_j},
& s_j\neq0,\\[2ex]
\left\langle
\operatorname{grad}f(w^{[j-1]}),
\,\mathrm{D}(\Exp_z(u^{[j-1]}))[E_j]
\right\rangle_{w^{[j-1]}},
& s_j=0.
\end{cases}
\label{eq:ia-coordinate-quotient}
\end{equation}
The Riemannian Itoh--Abe discrete gradient is then defined by
\begin{equation}
\mathcal{G}^{\mathrm{IA}}_f(y,z;z)
:=
\sum_{j=1}^d a_j(y,z)E_j
\in T_z\mathcal{M}.
\label{eq:ia-gradient}
\end{equation}
\end{enumerate}
\end{definition}

To derive explicit bounds for the parameters $\alpha$, $\beta$, and
$\gamma$ in the Riemannian Itoh--Abe and geodesic AVF constructions,
we first establish the following lemma.

\begin{lemma}
\label{lem:exp-distortion-bound}
Let
\[
K:=\max\{|K_{\min}|,|K_{\max}|\}.
\]
For $c\in\mathcal{X}$ and $u\in T_c\mathcal{M}$ such that
$q:=\Exp_c(u)\in\mathcal{X}$, define
\begin{equation}
\label{eq:exp-distortion-operator}
A_c(u)
:=
P_{q\to c}\circ\mathrm{D}\Exp_c(u),
\qquad
q:=\Exp_c(u).
\end{equation}
and
\begin{equation}
\label{eq:exp-distortion-constants}
\begin{aligned}
J_D
&:=
\sup_{\substack{c\in\mathcal{X},\ q=\Exp_c(u)\in\mathcal{X}\\
\|u\|_c\leq D}}
\|A_c(u)\|_{\mathrm{op}},
\qquad
H_D
:=
\sup_{\substack{c\in\mathcal{X},\ q=\Exp_c(u)\in\mathcal{X}\\
0<\|u\|_c\leq D}}
\frac{\|A_c(u)-I\|_{\mathrm{op}}}{\|u\|_c},
\\
\Lambda_D
&:=J_D+DH_D.
\end{aligned}
\end{equation}
Here, for a linear operator
$B:T_c\mathcal{M}\to T_c\mathcal{M}$, its operator norm is defined by
\begin{equation}
\label{eq:operator-norm-definition}
\|B\|_{\mathrm{op}}
:=
\sup_{v\neq 0}
\frac{\|Bv\|_c}{\|v\|_c}
=
\sup_{\|v\|_c=1}
\|Bv\|_c.
\end{equation}
Then, for $K>0$,
\[
J_D
\leq
\frac{\sinh(\sqrt{K}D)}{\sqrt{K}D},
\qquad
DH_D
\leq
\frac{\sinh(\sqrt{K}D)}{\sqrt{K}D}-1,
\]
and consequently
\begin{equation}
\label{eq:lambda-D-bound}
\Lambda_D
\leq
2\frac{\sinh(\sqrt{K}D)}{\sqrt{K}D}-1.
\end{equation}
For $K=0$, the right-hand side is understood by continuity, and
\[
A_c(u)=I,
\qquad
J_D=1,
\qquad
H_D=0,
\qquad
\Lambda_D=1.
\]
\end{lemma}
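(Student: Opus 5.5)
The plan is to express $\mathrm{D}\Exp_c(u)$ through Jacobi fields along the geodesic $\sigma(s)=\Exp_c(su)$, $s\in[0,1]$, and then apply Rauch-type comparison with the model space of curvature $-K$.

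\textbf{Jacobi-field representation.} For $v\in T_c\mathcal M$ we have $\mathrm{D}\Exp_c(u)[v]=J(1)$, where $J$ is the Jacobi field along $\sigma$ with $J(0)=0$ and $J'(0)=v$. Write $J(s)=P_{c\to\sigma(s)}\,j(s)$, so that $j$ is a curve in $T_c\mathcal M$. The Jacobi equation then becomes
\[
j''(s)+R_s\,j(s)=0,\qquad j(0)=0,\quad j'(0)=v,
\]
where $R_s:=P_{\sigma(s)\to c}\,R(\cdot,\dot\sigma)\dot\sigma\,P_{c\to\sigma(s)}$ is a self-adjoint operator on $T_c\mathcal M$. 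By the sectional curvature bounds, $\|R_s\|_{\mathrm{op}}\le K\|u\|_c^2$. This operator bound controls the full curvature operator, including off-diagonal terms, at the cost of constants depending on the dimension. To keep the stated constants exact, I will instead use only the two-sided bound on $\langle R_s w,w\rangle$, which is exactly what a norm comparison needs. Since $A_c(u)v=j(1)$, we get $\|A_c(u)\|_{\mathrm{op}}=\sup_{\|v\|=1}\|j(1)\|$, and $\|A_c(u)-I\|_{\mathrm{op}}=\sup_{\|v\|=1}\|j(1)-v\|$.

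\textbf{Bound on $J_D$.} Set $r=\|u\|_c\le D$ and $a:=\sqrt K\,r$. Rauch comparison with curvature bounded below by $-K$, which is valid because Assumption~\ref{ass:geometry} excludes conjugate points within distance $D$, gives
\[
\|J(1)\|\le \frac{\sinh a}{a}\,\|v\|.
\]
For a self-contained argument I would use the integral form $j(s)=sv-\int_0^s(s-\tau)R_\tau j(\tau)\,d\tau$. Setting $\phi(s):=\|j(s)\|$, this yields $\phi(s)\le s+K r^2\int_0^s(s-\tau)\phi(\tau)\,d\tau$. The majorant $\psi(s)=\sinh(as)/a$ solves the corresponding equality, since $\psi''=a^2\psi$, $\psi(0)=0$, $\psi'(0)=1$. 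A Gronwall/Volterra comparison for this positive kernel then gives $\phi\le\psi$. Evaluating at $s=1$ gives $\|A_c(u)\|_{\mathrm{op}}\le \sinh(a)/a\le \sinh(\sqrt K D)/(\sqrt K D)$, because $t\mapsto \sinh(t)/t$ is increasing.

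\textbf{Bound on $DH_D$.} From the integral formula,
\[
j(1)-v=-\int_0^1(1-\tau)R_\tau j(\tau)\,d\tau,
\]
so
\[
\|j(1)-v\|\le K r^2\int_0^1(1-\tau)\psi(\tau)\,d\tau=\psi(1)-1=\frac{\sinh a}{a}-1.
\]
The middle equality holds because $\psi=1\cdot s+a^2\int_0^s(s-\tau)\psi\,d\tau$ evaluated at $s=1$. Next, $(\sinh a/a-1)/r$ is increasing in $r$, since $(\sinh t/t-1)/t$ is increasing in $t$. Hence
\[
H_D\le \frac{1}{D}\Bigl(\frac{\sinh(\sqrt KD)}{\sqrt KD}-1\Bigr),
\]
and multiplying by $D$ gives the stated bound on $DH_D$. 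Adding this to the bound on $J_D$ yields \eqref{eq:lambda-D-bound}. When $K=0$, the operator $R_s$ vanishes, so $j(s)=sv$ and $A_c(u)=I$; this gives $J_D=1$, $H_D=0$ and $\Lambda_D=1$, consistent with the limits of the formulas.

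\textbf{Main obstacle.} The delicate step is passing from the sectional curvature bound to the operator estimate $\|R_\tau j\|\le K r^2\|j\|$. Sectional curvature bounds only control $\langle R w,w\rangle$. The normal component, which is the part relevant to $R_\tau$ acting on $\dot\sigma^{\perp}$, is a self-adjoint operator whose quadratic form lies in $[K_{\min}r^2,K_{\max}r^2]$. Because it is self-adjoint, its operator norm is bounded by $\max(|K_{\min}|,|K_{\max}|)\,r^2=Kr^2$. This is exactly why $K$ is defined as the maximum of the absolute curvature bounds. The tangential component of $j$ is unaffected, since $R(\dot\sigma,\dot\sigma)\dot\sigma=0$. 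So the estimate holds, and I would spell out this self-adjointness argument carefully.
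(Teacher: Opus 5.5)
Your proposal is correct and is essentially the paper's own argument: transport the Jacobi field with $J(0)=0$, $J'(0)=v$ back to $T_c\mathcal M$, bound the self-adjoint Jacobi operator by $K\|u\|_c^2$, compare the resulting Volterra inequality with $\sinh(as)/a$, and get the $H_D$ bound from $j(1)-v=-\int_0^1(1-\tau)R_\tau j(\tau)\,d\tau$ together with monotonicity of $(\sinh t/t-1)/t$. Two small corrections: your aside that $\|R_s\|_{\mathrm{op}}\le K\|u\|_c^2$ comes with dimension-dependent constants is wrong, since self-adjointness gives it exactly, as you argue yourself at the end; and the Rauch/conjugate-point detour is unnecessary, because the Volterra comparison under $|\mathrm{Sec}|\le K$ needs no such hypothesis.
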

\begin{proof}
See Appendix~\ref{app:exp-distortion-bound}.
\end{proof}

\begin{theorem}
\label{thm:structure-preserving-certificates}
Suppose that $f:\mathcal{X}\to\mathbb{R}$ is $L$-smooth and
$\mu$-strongly geodesically convex. Then the methods in
Definition~\ref{def:structure-preserving-methods} admit the following
c-wDRG parameters.

\begin{enumerate}

\item[(i)]
The Riemannian steepest-descent approximation
$\mathcal{G}^{\mathrm{SD}}_f$ is a c-wDRG with $c=z$ and
\[
(\alpha,\beta,\gamma)
=
\left(
\frac{L}{2},
\frac{\mu}{2},
0
\right).
\]

\item[(ii)]
The Riemannian proximal-point approximation
$\mathcal{G}^{\mathrm{PP}}_f$ is a c-wDRG with $c=y$ and
\[
(\alpha,\beta,\gamma)
=
\left(
0,
0,
\frac{\mu}{2}
\right).
\]

\item[(iii)] The Riemannian proximal-gradient approximation
$\mathcal G_F^{\mathrm{PG}}$ is a c-wDRG with
$c=z$ and
\[
(\alpha,\beta,\gamma)
=
\left(
\frac{L-\rho_{\psi}}{2},
\frac{\mu-\rho_{\psi}}{2},
0
\right).
\]

\item[(iv)]
The Riemannian implicit-midpoint approximation
$\mathcal{G}^{\mathrm{MP}}_f$ is a c-wDRG with
$c=m=\operatorname{mid}(y,z)$ and
\[
(\alpha,\beta,\gamma)
=
\left(
\frac{L+\mu}{8},
\frac{\mu}{4},
\frac{\mu}{4}
\right).
\]

\item[(v)]
If $\mu>0$, then, for every $0<\rho<\mu$, the geodesic AVF
discrete gradient $\mathcal{G}^{\mathrm{AVF},\theta}_f$ is a
c-wDRG with $c=c_\theta$ and
\[
\alpha
=
\frac{L+\mu-\rho}{2}\theta(1-\theta)
+
\frac{L^2\bar{\Lambda}_D^2\omega_\theta^2}{2\rho},
\qquad
\beta
=
\frac{\mu-\rho}{2}(1-\theta),
\qquad
\gamma
=
\frac{\mu-\rho}{2}\theta,
\]
where
\[
\bar{\Lambda}_D
:=
\begin{cases}
\displaystyle
2\frac{\sinh(\sqrt{K}D)}{\sqrt{K}D}-1,
& K>0,\\[2mm]
1,
& K=0,
\end{cases}
\qquad
K:=\max\{|K_{\min}|,|K_{\max}|\},
\]
is an explicit curvature-dependent upper bound for the geometric
distortion constant $\Lambda_D$ defined in Lemma~\ref{lem:exp-distortion-bound}, namely,
\[
\Lambda_D\leq\bar{\Lambda}_D.
\]
Moreover,
\begin{equation}
\omega_\theta
:=
\int_0^1|t-\theta|\,dt
=
\frac{\theta^2+(1-\theta)^2}{2}.
\label{eq:omega-theta}
\end{equation}

\item[(vi)]
If $\mu>0$, the Riemannian Gonzalez discrete gradient
$\mathcal{G}^{\mathrm{G}}_f$ is a c-wDRG with
$c=m=\operatorname{mid}(y,z)$ and
\[
(\alpha,\beta,\gamma)
=
\left(
\frac{L+\mu}{8}
+
\frac{(L-\mu)^2}{16\mu},
\frac{\mu}{4},
0
\right).
\]

\item[(vii)]
 If $\mu>0$, then,
for every $0<\rho<\mu$, the Riemannian Itoh--Abe discrete gradient
$\mathcal{G}^{\mathrm{IA}}_f$ is a c-wDRG with $c=z$ and
\[
(\alpha,\beta,\gamma)
=
\left(
\frac{dL^2\bar{\Lambda}_D^2}{2\rho},
\frac{\mu-\rho}{2},
0
\right).
\]
\begin{proof}
See Appendix~\ref{app:proof-structure-preserving-certificates}.
\end{proof}

\end{enumerate}
\end{theorem}

Compared with their Euclidean counterparts in~\cite{Ushiyama2023}, the geodesic AVF and Riemannian
Itoh--Abe certificates acquire an additional geometric factor
$\bar\Lambda_D^2$ in the coefficient $\alpha$. The reason is that $\mathrm D\Exp=I_d$ in Euclidean space, while on a
Riemannian manifold the differential of the exponential map is controlled
only through the curvature-dependent bound
$\|\mathrm D\Exp_c(u)[v]\|
\leq
\bar\Lambda_D\|v\|.$ The parameter $\rho\in(0,\mu)$ is introduced through Young's
inequality to balance the remainder arising from the $L$-smoothness
estimate with the strong geodesic-convexity term, resulting in the
residual coefficient $\mu-\rho$.

\section{Convergence analysis}
\label{sec:convergence}

We now establish convergence guarantees for the c-wDRG methods
introduced in Section~\ref{sec:examples}, considering both their nonaccelerated
and accelerated variants. Our analysis derives convergence rates in the
geodesically convex and geodesically strongly convex settings, with
the effects of the manifold geometry explicitly reflected through the
curvature-dependent constants $\delta_D$ and $\zeta_D$.

A key ingredient in the analysis is the comparison of squared
Riemannian distances when the base point moves along a curve. We begin
by recalling the following Hessian comparison estimate, which provides
the geometric bounds needed throughout the subsequent convergence
arguments.

\begin{lemma}
\label{lem:hessian-comparison-squared-distance}
(Alimisis et al.~\cite{Alimisis2020}, Lemma~2)
Let $\gamma:I\to\mathcal X$ be a smooth curve and let $p\in\mathcal X$.
Then, for every $t\in I$,
\[
\delta_D\|\dot{\gamma}(t)\|^2
\leq
\left\langle
\nabla_{\dot{\gamma}(t)}\Log_{\gamma(t)}p,
-\dot{\gamma}(t)
\right\rangle_{\gamma(t)}
\leq
\zeta_D\|\dot{\gamma}(t)\|^2.
\]
\end{lemma}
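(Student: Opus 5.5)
The approach is to reduce the statement to the Hessian of the squared distance function. Fix $p\in\mathcal X$ and set $\phi(x):=\tfrac12 d^2(x,p)$. On the geodesically uniquely convex domain $\mathcal X$ of Assumption~\ref{ass:geometry}, $\phi$ is smooth and $\operatorname{grad}\phi(x)=-\Log_x p$. Since the Levi--Civita connection defines the Hessian, for any smooth curve we get
\[
\left\langle \nabla_{\dot\gamma(t)}\Log_{\gamma(t)}p,\,-\dot\gamma(t)\right\rangle_{\gamma(t)}
=\left\langle \operatorname{Hess}\phi(\gamma(t))[\dot\gamma(t)],\dot\gamma(t)\right\rangle_{\gamma(t)} .
\]
Here $\nabla_{\dot\gamma}$ acts on the vector field $x\mapsto\Log_x p$ along $\gamma$. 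The claim therefore becomes the two-sided bound $\delta_D\|v\|^2\le\langle\operatorname{Hess}\phi(x)[v],v\rangle\le\zeta_D\|v\|^2$ for every $x\in\mathcal X$ and $v\in T_x\mathcal M$. It depends only on the point $\gamma(t)$ and the vector $\dot\gamma(t)$, so the curve plays no further role.

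To prove the pointwise bound, write $r:=d(x,p)\le D$. Decompose $v=v_\parallel+v_\perp$, where $v_\parallel$ is the component along the radial direction $\Log_x p$ and $v_\perp$ is orthogonal to it. The radial direction gives exactly $\|v_\parallel\|^2$, because $\phi$ restricted to the geodesic through $p$ is $\tfrac12 s^2$. The cross terms vanish by the Gauss lemma. For the orthogonal part, $\langle\operatorname{Hess}\phi[v_\perp],v_\perp\rangle = r\,\langle\operatorname{Hess} d_p[v_\perp],v_\perp\rangle$, which is the index form of the Jacobi field along the minimizing geodesic from $p$ to $x$ that vanishes at $p$.

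The Rauch (equivalently Hessian) comparison theorem with $K_{\min}\le\operatorname{Sec}\le K_{\max}$ then bounds this index form between the model-space values. These values are $r\sqrt{K_{\max}}\cot(\sqrt{K_{\max}}r)\|v_\perp\|^2$ from below and $r\sqrt{-K_{\min}}\coth(\sqrt{-K_{\min}}r)\|v_\perp\|^2$ from above, with the value $1$ in the flat cases. The lower comparison uses $r<\pi/\sqrt{K_{\max}}$, which is guaranteed by \eqref{eq:diameter-condition}. Finally, $s\mapsto s\cot s$ is decreasing on $(0,\pi)$ and $s\mapsto s\coth s$ is increasing with value at least $1$. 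Replacing $r$ by $D$ therefore gives the constants $\delta_D\le1\le\zeta_D$. Since $\delta_D\le 1\le\zeta_D$, the radial contribution $\|v_\parallel\|^2$ also fits inside both bounds, and summing the two pieces yields the claim.

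The main obstacle is the rigorous Hessian comparison for the orthogonal component, in particular the upper bound. That bound requires the absence of conjugate points along the geodesic, which holds because $\Exp$ is a diffeomorphism on the domain by Assumption~\ref{ass:geometry}. Since this is a known result (Alimisis et al.~\cite{Alimisis2020}, Lemma~2), I would cite the standard comparison theorem for this step rather than rederive it. The remaining work is only to verify the identification of $\nabla_{\dot\gamma}\Log_{\gamma}p$ with $-\operatorname{Hess}\phi[\dot\gamma]$ and the monotonicity used to pass from $r$ to $D$.
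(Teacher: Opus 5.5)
Your proposal is correct and matches the standard argument behind this result. The paper gives no proof of its own and simply imports Lemma~2 of Alimisis et al., which rests on the same reduction you use: identify the middle term with $\langle\operatorname{Hess}\tfrac12 d^2(\cdot,p)[\dot\gamma],\dot\gamma\rangle$, apply Hessian comparison, and use the monotonicity of $s\cot s$ and $s\coth s$ to pass from $d(\gamma(t),p)$ to $D$. The one case you leave implicit is $\gamma(t)=p$, where there is no radial direction; there $\operatorname{Hess}\tfrac12 d^2(\cdot,p)$ is the identity, and $\delta_D\le 1\le\zeta_D$ settles it.
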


Applying Lemma~\ref{lem:hessian-comparison-squared-distance} along the minimizing geodesic from $c$ to $q$
yields the following two-sided comparison for the squared distance.

\begin{corollary}
\label{cor:normal-coordinate-comparison}
Let $c,q,p\in\mathcal{X}$. Then
\[
\|\Log_c p\|_c^2
-2\left\langle \Log_c q,\Log_c p\right\rangle_c
+\delta_D\|\Log_c q\|_c^2
\leq
d^2(q,p)
\leq
\|\Log_c p\|_c^2
-2\left\langle \Log_c q,\Log_c p\right\rangle_c
+\zeta_D\|\Log_c q\|_c^2.
\]
Equivalently,
\[
(1-\zeta_D)\|\Log_c q\|_c^2
\leq
\|\Delta_c(q,p)\|_c^2-d^2(q,p)
\leq
(1-\delta_D)\|\Log_c q\|_c^2.
\]
\end{corollary}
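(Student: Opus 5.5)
Set $\phi(t):=\tfrac12 d^2(\gamma(t),p)$ along the minimizing geodesic $\gamma(t)=\Exp_c(t\Log_c q)$, $t\in[0,1]$, from $c$ to $q$. By Assumption~\ref{ass:geometry} this geodesic is unique and lies in $\mathcal X$, so $\phi$ is smooth. We have $\gamma(0)=c$, $\gamma(1)=q$, and $\|\dot\gamma(t)\|_{\gamma(t)}=\|\Log_c q\|_c$ for all $t$.

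\textbf{Derivatives of $\phi$.} Since $\operatorname{grad}\tfrac12 d^2(\cdot,p)(x)=-\Log_x p$, we get $\phi'(t)=\langle -\Log_{\gamma(t)}p,\dot\gamma(t)\rangle$. Because $\nabla_{\dot\gamma}\dot\gamma=0$, differentiating again gives
\[
\phi''(t)=\left\langle \nabla_{\dot\gamma(t)}\Log_{\gamma(t)}p,-\dot\gamma(t)\right\rangle_{\gamma(t)}.
\]
Lemma~\ref{lem:hessian-comparison-squared-distance} then yields
\[
\delta_D\|\Log_c q\|_c^2\le\phi''(t)\le\zeta_D\|\Log_c q\|_c^2 .
\]
At $t=0$ we have $\phi(0)=\tfrac12\|\Log_c p\|_c^2$ and $\phi'(0)=-\langle\Log_c p,\Log_c q\rangle_c$.

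\textbf{Integration.} Taylor's formula with integral remainder gives $\phi(1)=\phi(0)+\phi'(0)+\int_0^1(1-t)\phi''(t)\,dt$. Since $\int_0^1(1-t)\,dt=\tfrac12$, the curvature bounds give
\[
\tfrac12\|\Log_c p\|^2-\langle\Log_c p,\Log_c q\rangle+\tfrac{\delta_D}{2}\|\Log_c q\|^2\le \tfrac12 d^2(q,p)\le \tfrac12\|\Log_c p\|^2-\langle\Log_c p,\Log_c q\rangle+\tfrac{\zeta_D}{2}\|\Log_c q\|^2 .
\]
Multiplying by $2$ gives the first display.

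\textbf{Equivalent form.} Expand $\|\Delta_c(q,p)\|_c^2=\|\Log_c q\|^2-2\langle\Log_c q,\Log_c p\rangle+\|\Log_c p\|^2$. Subtracting this from the first display gives
\[
(\delta_D-1)\|\Log_c q\|^2\le d^2(q,p)-\|\Delta_c(q,p)\|^2\le(\zeta_D-1)\|\Log_c q\|^2 .
\]
Negating this chain gives the stated form.

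\textbf{Main obstacle.} The only delicate point is applying Lemma~\ref{lem:hessian-comparison-squared-distance} for all $t\in[0,1]$. This needs $d(\gamma(t),p)\le D$, which holds because $\gamma(t)$ stays in $\mathcal X$ and $\operatorname{diam}\mathcal X\le D$. It also needs smoothness of $d^2(\cdot,p)$ along the curve, which follows from unique geodesic convexity together with the diameter condition~\eqref{eq:diameter-condition}, keeping $p$ inside the injectivity radius. The degenerate case $q=c$ is trivial.
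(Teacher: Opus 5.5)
Your proposal is correct and follows essentially the same argument as the paper: the radial geodesic $\gamma(t)=\Exp_c(t\Log_c q)$, the function $\phi(t)=\frac12 d^2(\gamma(t),p)$ with the first-variation formula, the bound on $\phi''$ from Lemma~\ref{lem:hessian-comparison-squared-distance}, Taylor's formula with integral remainder, and the expansion of $\|\Delta_c(q,p)\|_c^2$ to obtain the equivalent form. Your closing remark on why the comparison lemma applies along the whole curve makes explicit a point the paper leaves implicit.
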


\begin{proof}
Consider the radial geodesic
\[
\gamma(t)=\Exp_c\bigl(t\Log_c q\bigr),
\qquad t\in[0,1].
\]
Then
\[
\gamma(0)=c,
\qquad
\dot{\gamma}(0)=\Log_c q,
\qquad
\|\dot{\gamma}(t)\|_{\gamma(t)}
=
\|\Log_c q\|_c.
\]
Define
\[
\phi(t):=\frac12 d^2(\gamma(t),p).
\]
By the first variation formula for the squared Riemannian distance
(see, e.g., Lemma~12 in \cite{Alimisis2020}),
\[
\phi'(t)
=
\left\langle
\Log_{\gamma(t)}p,
-\dot{\gamma}(t)
\right\rangle_{\gamma(t)}.
\]
Hence
\[
\phi(0)=\frac12\|\Log_c p\|_c^2,
\qquad
\phi'(0)
=
-\left\langle\Log_c p,\Log_c q\right\rangle_c.
\]
Since $\gamma$ is a geodesic,
$\nabla_{\dot{\gamma}(t)}\dot{\gamma}(t)=0$, and therefore
\[
\phi''(t)
=
\left\langle
\nabla_{\dot{\gamma}(t)}\Log_{\gamma(t)}p,
-\dot{\gamma}(t)
\right\rangle_{\gamma(t)}.
\]
By Lemma~\ref{lem:hessian-comparison-squared-distance},
\[
\delta_D\|\Log_c q\|_c^2
\leq
\phi''(t)
\leq
\zeta_D\|\Log_c q\|_c^2.
\]
Taylor's formula with integral remainder gives
\[
\phi(1)
=
\phi(0)+\phi'(0)
+
\int_0^1(1-t)\phi''(t)\,dt.
\]
Using
\[
\int_0^1(1-t)\,dt=\frac12
\]
and $\phi(1)=\frac12 d^2(q,p)$, we obtain
\[
\|\Log_c p\|_c^2
-2\left\langle \Log_c q,\Log_c p\right\rangle_c
+\delta_D\|\Log_c q\|_c^2
\leq
d^2(q,p)
\leq
\|\Log_c p\|_c^2
-2\left\langle \Log_c q,\Log_c p\right\rangle_c
+\zeta_D\|\Log_c q\|_c^2.
\]

Finally, since
\[
\|\Delta_c(q,p)\|_c^2
=
\|\Log_c p\|_c^2
-2\left\langle\Log_c q,\Log_c p\right\rangle_c
+\|\Log_c q\|_c^2,
\]
subtracting the above two-sided estimate from this identity yields
\[
(1-\zeta_D)\|\Log_c q\|_c^2
\leq
\|\Delta_c(q,p)\|_c^2-d^2(q,p)
\leq
(1-\delta_D)\|\Log_c q\|_c^2.
\]
\end{proof}

\subsection{Convergence of Nonaccelerated c-wDRG Methods}
\label{subsec:nonaccelerated-convergence}

In this subsection, we analyze the convergence of the nonaccelerated c-wDRG methods in both the geodesically convex and geodesically strongly convex settings. The iterates are generated according to the centered
update~\eqref{eq:centered-update}, and the corresponding convergence rates are established in
Corollaries~\ref{cor:convex-rate} and
\ref{cor:strongly-convex-rate}, respectively.

Let $x^\star\in\operatorname*{argmin}_{x\in\mathcal{X}} f(x)$, and let
\[
c_\theta
=
\gamma_{x_k,x_{k+1}}(\theta),
\qquad
0\leq\theta\leq 1,
\qquad
c_0=x_k,\quad c_1=x_{k+1}.
\]
where $\gamma_{x_k,x_{k+1}}$ is the geodesic
from $x_k$ to $x_{k+1}$. Since $c_\theta$ lies on this geodesic,
\[
\Log_{c_\theta}x_k
=
-\theta
\Delta_{c_\theta}(x_{k+1},x_k),
\qquad
\Log_{c_\theta}x_{k+1}
=
(1-\theta)
\Delta_{c_\theta}(x_{k+1},x_k).
\]
Suppose that $\mathcal{G}_f$ is an $(\alpha,\beta,\gamma)$-c-wDRG and that
$x_{k+1}$ satisfies
\begin{equation}
\label{eq:centered-update}
\Delta_{c_\theta}(x_{k+1},x_k)
=
-h\,\mathcal{G}_f(x_{k+1},x_k;c_\theta).
\end{equation}
which defines the update algorithm for the nonaccelerated c-wDRG method.

Moreover, applying Corollary~\ref{cor:normal-coordinate-comparison} with
$(c,q,p)=(c_\theta,x_k,x^\star)$ gives
\begin{equation}
\label{eq:distortion-old-endpoint}
(1-\zeta_D)\theta^2\|\Delta_{c_\theta}(x_{k+1},x_k)\|_{c_\theta}^2
\leq
\|\Delta_{c_\theta}(x_k,x^\star)\|_{c_\theta}^2-d^2(x_k,x^\star)
\leq
(1-\delta_D)\theta^2\|\Delta_{c_\theta}(x_{k+1},x_k)\|_{c_\theta}^2.
\end{equation}

Similarly, applying Corollary~\ref{cor:normal-coordinate-comparison} with
$(c,q,p)=(c_\theta,x_{k+1},x^\star)$ yields
\begin{equation}
\label{eq:distortion-new-endpoint}
(1-\zeta_D)(1-\theta)^2\|\Delta_{c_\theta}(x_{k+1},x_k)\|_{c_\theta}^2
\leq
\|\Delta_{c_\theta}(x_{k+1},x^\star)\|_{c_\theta}^2-d^2(x_{k+1},x^\star)
\leq
(1-\delta_D)(1-\theta)^2\|\Delta_{c_\theta}(x_{k+1},x_k)\|_{c_\theta}^2.
\end{equation}

Therefore, the following corollary follows directly from the preceding estimates.

\begin{corollary}
\label{cor:center-compatible-curvature-correction}
For every admissible step $h$, the curvature distortion satisfies
\[
\begin{aligned}
&\left(\frac{1}{2h}-\beta\right)
\left[
\|\Delta_{c_\theta}(x_k,x^\star)\|_{c_\theta}^2
-d^2(x_k,x^\star)
\right]
-
\left(\frac{1}{2h}+\gamma\right)
\left[
\|\Delta_{c_\theta}(x_{k+1},x^\star)\|_{c_\theta}^2
-d^2(x_{k+1},x^\star)
\right]
\\
&\qquad\leq
\chi_h^{(\theta)}
\|\Delta_{c_\theta}(x_{k+1},x_k)\|_{c_\theta}^2.
\end{aligned}
\]
where
\[
\begin{aligned}
\chi_h^{(\theta)}
={}&
\theta^2
\left[
\left[\frac{1}{2h}-\beta\right]_+
(1-\delta_D)
+
\left[\frac{1}{2h}-\beta\right]_-
(\zeta_D-1)
\right]
\\
&+
(1-\theta)^2
\left[
\left[\frac{1}{2h}+\gamma\right]_+
(\zeta_D-1)
+
\left[\frac{1}{2h}+\gamma\right]_-
(1-\delta_D)
\right],
\end{aligned}
\]
with
\[
[a]_+:=\max\{a,0\},
\qquad
[a]_-:=\max\{-a,0\}.
\]
In particular, if
\[
\frac{1}{2h}-\beta\geq0,
\qquad
\frac{1}{2h}+\gamma\geq0,
\]
then
\[
\chi_h^{(\theta)}
=
\left(\frac{1}{2h}-\beta\right)(1-\delta_D)\theta^2
+
\left(\frac{1}{2h}+\gamma\right)(\zeta_D-1)(1-\theta)^2.
\]
\end{corollary}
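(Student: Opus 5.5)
The plan is to combine the two-sided distortion estimates \eqref{eq:distortion-old-endpoint} and \eqref{eq:distortion-new-endpoint}, both already derived from Corollary~\ref{cor:normal-coordinate-comparison} with $(c,q,p)=(c_\theta,x_k,x^\star)$ and $(c_\theta,x_{k+1},x^\star)$. The whole argument then reduces to a sign-sensitive linear combination of scalar inequalities. Throughout, write
\[
E_k:=\|\Delta_{c_\theta}(x_k,x^\star)\|_{c_\theta}^2-d^2(x_k,x^\star),\qquad E_{k+1}:=\|\Delta_{c_\theta}(x_{k+1},x^\star)\|_{c_\theta}^2-d^2(x_{k+1},x^\star),
\]
and set $N:=\|\Delta_{c_\theta}(x_{k+1},x_k)\|_{c_\theta}^2\geq 0$. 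Since $\zeta_D\geq1$ and $\delta_D\leq1$ under Assumption~\ref{ass:geometry}, the two estimates say
\[
-(\zeta_D-1)\theta^2N\leq E_k\leq(1-\delta_D)\theta^2N,
\qquad
-(\zeta_D-1)(1-\theta)^2N\leq E_{k+1}\leq(1-\delta_D)(1-\theta)^2N .
\]
Both bounds on each side are nonnegative multiples of $N$.

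The first key step bounds the term $a E_k$, where $a:=\frac{1}{2h}-\beta$, by splitting $a=[a]_+-[a]_-$. For the positive part I use the upper bound on $E_k$, which gives $[a]_+E_k\leq[a]_+(1-\delta_D)\theta^2N$. For the negative part I use the lower bound: $-[a]_-E_k\leq[a]_-(\zeta_D-1)\theta^2N$. Adding these yields exactly the first bracket of $\chi_h^{(\theta)}$ times $N$.

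The second key step treats $-bE_{k+1}$ with $b:=\frac{1}{2h}+\gamma$ in the same way. Writing $-b=-[b]_++[b]_-$, the lower bound on $E_{k+1}$ controls $-[b]_+E_{k+1}$, giving $[b]_+(\zeta_D-1)(1-\theta)^2N$. The upper bound controls $[b]_-E_{k+1}$, giving $[b]_-(1-\delta_D)(1-\theta)^2N$. This matches the second bracket of $\chi_h^{(\theta)}$.

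Summing the two steps gives the claimed inequality. The special case follows immediately: when $a\geq0$ and $b\geq0$, the negative parts vanish and the positive parts reduce to $a$ and $b$ themselves.

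I do not expect a genuine obstacle here. The only point needing care is the sign bookkeeping, namely choosing the upper or lower bound according to the sign of each coefficient, including the minus sign in front of the $E_{k+1}$ term. It is also worth checking that $\delta_D\leq1\leq\zeta_D$ (so that $1-\delta_D\geq0$ and $\zeta_D-1\geq0$) and that $c_\theta$ is a point of $\mathcal X$ at which Corollary~\ref{cor:normal-coordinate-comparison} applies. The first holds since $x\cot x\leq1$ and $x\coth x\geq1$. The second holds by the geodesic unique convexity in Assumption~\ref{ass:geometry} together with Assumption~\ref{ass:iterates}.
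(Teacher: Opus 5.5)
Your proposal is correct and is essentially the paper's own argument: combine the two interval bounds \eqref{eq:distortion-old-endpoint} and \eqref{eq:distortion-new-endpoint}, and for each coefficient (including the minus sign in front of the $x_{k+1}$ term) use the upper or lower endpoint according to its sign, which yields $\chi_h^{(\theta)}$ exactly. Your explicit splitting into $[a]_+$ and $[a]_-$ spells out what the paper states in one line, and your check that $\delta_D\le 1\le\zeta_D$ is harmless but not needed for the combination step.
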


\begin{proof}
By \eqref{eq:distortion-old-endpoint},
\[
(1-\zeta_D)\theta^2
\|\Delta_{c_\theta}(x_{k+1},x_k)\|_{c_\theta}^2
\leq
\|\Delta_{c_\theta}(x_k,x^\star)\|_{c_\theta}^2
-d^2(x_k,x^\star)
\leq
(1-\delta_D)\theta^2
\|\Delta_{c_\theta}(x_{k+1},x_k)\|_{c_\theta}^2,
\]
while \eqref{eq:distortion-new-endpoint} gives
\[
(1-\zeta_D)(1-\theta)^2
\|\Delta_{c_\theta}(x_{k+1},x_k)\|_{c_\theta}^2
\leq
\|\Delta_{c_\theta}(x_{k+1},x^\star)\|_{c_\theta}^2
-d^2(x_{k+1},x^\star)
\leq
(1-\delta_D)(1-\theta)^2
\|\Delta_{c_\theta}(x_{k+1},x_k)\|_{c_\theta}^2.
\]
For each coefficient, the appropriate endpoint of the corresponding
interval is selected according to its sign. Separating the positive and
negative parts therefore gives exactly the stated expression for
$\chi_h^{(\theta)}$. The simplified formula follows immediately when
$\frac{1}{2h}-\beta\geq0$ and $\frac{1}{2h}+\gamma\geq0$.
\end{proof}

We next establish a general convergence theorem, from which the iteration complexities of the nonaccelerated c-wDRG methods follow as corollaries.

\begin{theorem}
\label{thm:unified-wdrg-estimate}
Let the iterates be generated by \eqref{eq:centered-update}, and assume
that the c-wDRG inequality \eqref{eq:c-wdrg} and the
curvature-correction estimate in Corollary~\ref{cor:center-compatible-curvature-correction} hold, together with
$
1-(\alpha-\gamma)h>0.
$
Define
\[
\lambda_h
:=
\frac{\left[\alpha h-\frac{1}{2}+h\chi_h^{(\theta)}\right]_+}{1-(\alpha-\gamma)h}.
\]
Then
\begin{equation}
\label{eq:one-step-estimate}
(1+\lambda_h)\bigl(f(x_{k+1})-f(x^\star)\bigr)
-\lambda_h\bigl(f(x_k)-f(x^\star)\bigr)
+\beta d^2(x_k,x^\star)
+\gamma d^2(x_{k+1},x^\star)
\leq
\frac{d^2(x_k,x^\star)-d^2(x_{k+1},x^\star)}{2h}.
\end{equation}
Moreover, define the Lyapunov function
\[
\mathcal{E}_k
:=
(1+\lambda_h)\bigl(f(x_k)-f(x^\star)\bigr)
+
\left(\frac{1}{2h}+\gamma\right)d^2(x_k,x^\star).
\]
Then
\[
\mathcal{E}_{k+1}
\leq
q_h\mathcal{E}_k,
\]
where
\[
q_h
:=
\max\left\{
\frac{\lambda_h}{1+\lambda_h},
\left[
\frac{1-2\beta h}{1+2\gamma h}
\right]_+
\right\}.
\]
\end{theorem}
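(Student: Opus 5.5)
We would follow the Euclidean weak-DG Lyapunov argument of Ushiyama et al., carried out entirely in the common tangent space $T_{c_\theta}\mathcal M$, and only at the end convert the squared tangent norms to squared Riemannian distances using Corollary~\ref{cor:center-compatible-curvature-correction}. Write $v:=\Delta_{c_\theta}(x_{k+1},x_k)$, $a:=\Delta_{c_\theta}(x_k,x^\star)$, $b:=\Delta_{c_\theta}(x_{k+1},x^\star)$, so that $b=v+a$ by the additive identity. The update \eqref{eq:centered-update} gives $\mathcal G_f=-v/h$.

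\textbf{First step.} Apply \eqref{eq:c-wdrg} with $(y,z,x)=(x_{k+1},x_k,x^\star)$:
\[
f(x_{k+1})-f(x^\star)\le -\tfrac1h\langle v,b\rangle+\alpha\|v\|^2-\beta\|a\|^2-\gamma\|b\|^2 .
\]
Use the polarization identity $-\langle v,b\rangle=\tfrac12(\|a\|^2-\|b\|^2-\|v\|^2)$ to rewrite the right-hand side as
\[
\Bigl(\tfrac1{2h}-\beta\Bigr)\|a\|^2-\Bigl(\tfrac1{2h}+\gamma\Bigr)\|b\|^2+\Bigl(\alpha-\tfrac1{2h}\Bigr)\|v\|^2 .
\]
Now replace $\|a\|^2,\|b\|^2$ by $d^2(x_k,x^\star),d^2(x_{k+1},x^\star)$ via Corollary~\ref{cor:center-compatible-curvature-correction}; the error is at most $\chi_h^{(\theta)}\|v\|^2$. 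This yields
\[
f(x_{k+1})-f^\star+\beta d_k^2+\gamma d_{k+1}^2\le \frac{d_k^2-d_{k+1}^2}{2h}+\Bigl(\alpha+\chi_h^{(\theta)}-\tfrac1{2h}\Bigr)\|v\|^2 .
\]

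\textbf{Second step.} Absorb the residual $\|v\|^2$ term, which is where $\lambda_h$ enters. Apply the chain-rule inequality \eqref{eq:c-wdrg-chain-rule} with $\mathcal G_f=-v/h$:
\[
f(x_{k+1})-f(x_k)\le -\Bigl(\tfrac1h-(\alpha-\gamma)\Bigr)\|v\|^2 .
\]
Since $1-(\alpha-\gamma)h>0$, this gives
\[
\|v\|^2\le \frac{h}{1-(\alpha-\gamma)h}\bigl(f(x_k)-f(x_{k+1})\bigr).
\]
Bound the positive part of the residual coefficient, multiplied by $h$, i.e.\ $[\alpha h-\tfrac12+h\chi_h^{(\theta)}]_+/h$, using this inequality; the coefficient becomes exactly $\lambda_h\bigl(f(x_k)-f(x_{k+1})\bigr)$. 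Moving $\lambda_h f(x_{k+1})$ to the left gives \eqref{eq:one-step-estimate}. If the residual coefficient is negative, we simply drop it, which is consistent with $\lambda_h=0$.

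\textbf{Third step.} Rearrange \eqref{eq:one-step-estimate} as
\[
\mathcal E_{k+1}\le \lambda_h\bigl(f(x_k)-f^\star\bigr)+\Bigl(\tfrac1{2h}-\beta\Bigr)d_k^2 .
\]
Write $\lambda_h=\tfrac{\lambda_h}{1+\lambda_h}(1+\lambda_h)$ and $\tfrac1{2h}-\beta=\tfrac{1-2\beta h}{1+2\gamma h}\bigl(\tfrac1{2h}+\gamma\bigr)$. Bounding each factor by $q_h$ then gives $\mathcal E_{k+1}\le q_h\mathcal E_k$. The $[\cdot]_+$ in $q_h$ handles the case $1-2\beta h<0$, where that term is nonpositive and can be dropped.

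\textbf{Expected obstacle.} The main delicacy is the sign bookkeeping in the curvature correction. Corollary~\ref{cor:center-compatible-curvature-correction} bounds exactly the weighted combination appearing after polarization, with the same weights $\tfrac1{2h}-\beta$ and $\tfrac1{2h}+\gamma$, so it applies directly. One must still check that using $[\cdot]_+$ in $\lambda_h$ is legitimate; it is, because $f(x_k)-f(x_{k+1})\ge0$ whenever the coefficient is positive, and a negative coefficient is simply dropped.
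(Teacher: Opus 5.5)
Your proposal is correct and follows the paper's proof essentially step by step. Both arguments apply \eqref{eq:c-wdrg} with $(y,z,x)=(x_{k+1},x_k,x^\star)$ and then use the polarization identity in $T_{c_\theta}\mathcal M$. Both then apply the curvature correction of Corollary~\ref{cor:center-compatible-curvature-correction}, absorb the residual $\|\Delta_{c_\theta}(x_{k+1},x_k)\|^2$ term through the chain-rule descent inequality to produce $\lambda_h$, and finish with a factor-by-factor comparison against $q_h$.

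One point that you and the paper both leave implicit: the final step writes $\frac{1}{2h}-\beta=\frac{1-2\beta h}{1+2\gamma h}\bigl(\frac{1}{2h}+\gamma\bigr)$ and bounds it by $q_h\bigl(\frac{1}{2h}+\gamma\bigr)$, which requires $1+2\gamma h>0$. This condition holds automatically when $\gamma\geq0$, as for every certificate in the paper, but it is not among the theorem's stated hypotheses.
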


\begin{proof}
By the additivity of the centered logarithmic difference,
\[
\Delta_{c_\theta}(x_{k+1},x^\star)
=
\Delta_{c_\theta}(x_{k+1},x_k)
+
\Delta_{c_\theta}(x_k,x^\star).
\]
Hence,
\[
2\left\langle \Delta_{c_\theta}(x_{k+1},x_k),\Delta_{c_\theta}(x_{k+1},x^\star)\right\rangle_{c_\theta}
=
\|\Delta_{c_\theta}(x_{k+1},x^\star)\|_{c_\theta}^2
-
\|\Delta_{c_\theta}(x_k,x^\star)\|_{c_\theta}^2
+
\|\Delta_{c_\theta}(x_{k+1},x_k)\|_{c_\theta}^2.
\]
By \eqref{eq:centered-update},
we obtain
\[
\left\langle \mathcal{G}_f(x_{k+1},x_k;c_\theta),\Delta_{c_\theta}(x_{k+1},x^\star)\right\rangle_{c_\theta}
=
\frac{1}{2h}\left[\|\Delta_{c_\theta}(x_k,x^\star)\|_{c_\theta}^2-\|\Delta_{c_\theta}(x_{k+1},x^\star)\|_{c_\theta}^2-\|\Delta_{c_\theta}(x_{k+1},x_k)\|_{c_\theta}^2\right].
\]
Taking $y=x_{k+1}$, $z=x_k$, and $x=x^\star$ in \eqref{eq:c-wdrg} gives
\[
f(x_{k+1})-f(x^\star)
\leq
\left(\frac{1}{2h}-\beta\right)\|\Delta_{c_\theta}(x_k,x^\star)\|_{c_\theta}^2
-\left(\frac{1}{2h}+\gamma\right)\|\Delta_{c_\theta}(x_{k+1},x^\star)\|_{c_\theta}^2
+\left(\alpha-\frac{1}{2h}\right)\|\Delta_{c_\theta}(x_{k+1},x_k)\|_{c_\theta}^2.
\]
By Corollary~\ref{cor:center-compatible-curvature-correction}, we obtain
\[
f(x_{k+1})-f(x^\star)
+\beta d^2(x_k,x^\star)
+\gamma d^2(x_{k+1},x^\star)
\leq
\frac{d^2(x_k,x^\star)-d^2(x_{k+1},x^\star)}{2h}
+
\left(\alpha-\frac{1}{2h}+\chi_h^{(\theta)}\right)
\|\Delta_{c_\theta}(x_{k+1},x_k)\|_{c_\theta}^2.
\]
By \eqref{eq:c-wdrg-chain-rule} and \eqref{eq:centered-update},
\begin{equation}
\label{eq:c-wdrg-descent}
\begin{aligned}
f(x_{k+1})-f(x_k)
&\leq
\left\langle
\mathcal{G}_f(x_{k+1},x_k;c_\theta),
\Delta_{c_\theta}(x_{k+1},x_k)
\right\rangle_{c_\theta}
+
(\alpha-\gamma)
\|\Delta_{c_\theta}(x_{k+1},x_k)\|_{c_\theta}^2
\\
&=
-\left(\frac{1}{h}-\alpha+\gamma\right)
\|\Delta_{c_\theta}(x_{k+1},x_k)\|_{c_\theta}^2.
\end{aligned}
\end{equation}
Since $1-(\alpha-\gamma)h>0$,
\[
\|\Delta_{c_\theta}(x_{k+1},x_k)\|_{c_\theta}^2
\leq
\frac{h}{1-(\alpha-\gamma)h}
\left[f(x_k)-f(x_{k+1})\right].
\]
Consequently, by the definition of $\lambda_h$,
\[
\left(\alpha-\frac{1}{2h}+\chi_h^{(\theta)}\right)
\|\Delta_{c_\theta}(x_{k+1},x_k)\|_{c_\theta}^2
\leq
\lambda_h\left[f(x_k)-f(x_{k+1})\right].
\]
Substitution gives
\[
(1+\lambda_h)\bigl(f(x_{k+1})-f(x^\star)\bigr)
-\lambda_h\bigl(f(x_k)-f(x^\star)\bigr)
+\beta d^2(x_k,x^\star)
+\gamma d^2(x_{k+1},x^\star)
\leq
\frac{d^2(x_k,x^\star)-d^2(x_{k+1},x^\star)}{2h},
\]
which proves the first claim. Rearranging the above inequality yields
\[
(1+\lambda_h)\bigl(f(x_{k+1})-f(x^\star)\bigr)
+
\left(\frac{1}{2h}+\gamma\right)d^2(x_{k+1},x^\star)
\leq
\lambda_h\bigl(f(x_k)-f(x^\star)\bigr)
+
\left(\frac{1}{2h}-\beta\right)d^2(x_k,x^\star).
\]
The left-hand side is $\mathcal{E}_{k+1}$. Moreover, by the definition of $q_h$,
\[
\lambda_h\leq q_h(1+\lambda_h),
\]
and
by the definition of $q_h$,
\[
\lambda_h\leq q_h(1+\lambda_h),
\qquad
\frac{1}{2h}-\beta
\leq
q_h\left(\frac{1}{2h}+\gamma\right).
\]
Therefore,
\[
\mathcal{E}_{k+1}
\leq
q_h\left[(1+\lambda_h)\bigl(f(x_k)-f(x^\star)\bigr)+\left(\frac{1}{2h}+\gamma\right)d^2(x_k,x^\star)\right]
=
q_h\mathcal{E}_k.
\]
\end{proof}
\begin{corollary}[Convex case]
\label{cor:convex-rate}
Under the assumptions of Theorem~\ref{thm:unified-wdrg-estimate},
assume further that $\beta,\gamma\geq0$. Then, for every $k\geq1$,
\[
f(x_k)-f(x^\star)
\leq
\frac{\dfrac{d^2(x_0,x^\star)}{2h}
+\lambda_h\bigl(f(x_0)-f(x^\star)\bigr)}
{k+\lambda_h}.
\]
\end{corollary}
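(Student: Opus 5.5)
The plan is to telescope the one-step estimate \eqref{eq:one-step-estimate} of Theorem~\ref{thm:unified-wdrg-estimate}, after discarding the nonnegative terms involving $\beta$ and $\gamma$. Write $e_k:=f(x_k)-f(x^\star)\ge0$ and $r_k:=d^2(x_k,x^\star)$. Since $\beta,\gamma\ge0$, the terms $\beta r_j+\gamma r_{j+1}$ on the left of \eqref{eq:one-step-estimate} are nonnegative and can be dropped. This gives, for every $j\ge0$,
\[
(1+\lambda_h)e_{j+1}-\lambda_h e_j\le\frac{r_j-r_{j+1}}{2h}.
\]

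Next, sum this inequality over $j=0,\dots,k-1$. The left-hand side equals $\sum_{j=1}^{k}e_j+\lambda_h(e_k-e_0)$. This holds because $(1+\lambda_h)e_{j+1}-\lambda_he_j=e_{j+1}+\lambda_h(e_{j+1}-e_j)$, and the $\lambda_h$-part telescopes. The right-hand side telescopes to $(r_0-r_k)/(2h)\le r_0/(2h)$. Hence
\[
\sum_{j=1}^{k}e_j+\lambda_h e_k\le\frac{r_0}{2h}+\lambda_h e_0.
\]

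The remaining step, and the only real point to check, is monotonicity $e_j\ge e_k$ for $j\le k$. This follows from the descent estimate \eqref{eq:c-wdrg-descent} inside the proof of Theorem~\ref{thm:unified-wdrg-estimate}. Under the standing hypothesis $1-(\alpha-\gamma)h>0$ we have $\frac1h-\alpha+\gamma>0$, so $f(x_{j+1})\le f(x_j)$. Therefore $\sum_{j=1}^k e_j\ge k e_k$, and so $(k+\lambda_h)e_k\le r_0/(2h)+\lambda_h e_0$. Dividing by $k+\lambda_h>0$, which is valid for $k\ge1$ since $\lambda_h\ge0$ by its definition as a positive part over a positive denominator, gives the claimed bound.

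I expect no genuine obstacle; the points to verify are only the sign conditions. The nonnegativity of $\lambda_h$ justifies the division. The hypothesis $1-(\alpha-\gamma)h>0$ yields monotone decrease of $f(x_k)$. The assumption $\beta,\gamma\ge0$ allows the distance terms to be dropped; if $\beta$ or $\gamma$ were negative, those terms could not be discarded.
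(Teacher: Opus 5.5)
Your proposal is correct and follows essentially the same route as the paper: you drop the nonnegative $\beta,\gamma$ distance terms, telescope \eqref{eq:one-step-estimate} over $j=0,\dots,k-1$, and use the monotonicity of $f(x_j)$ from \eqref{eq:c-wdrg-descent} to get $\sum_{j=1}^{k}e_j\ge k e_k$. You also make explicit the sign facts the paper uses implicitly, namely $\lambda_h\ge0$ and $\tfrac1h-\alpha+\gamma>0$.
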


\begin{proof}
By \eqref{eq:c-wdrg-descent},
\[
f(x_{j+1})-f(x_j)
\leq
-\left(\frac{1}{h}-\alpha+\gamma\right)
\|\Delta_{c_\theta}(x_{j+1},x_j)\|_{c_\theta}^2.
\]
Since $1-(\alpha-\gamma)h>0$, the sequence $\{f(x_j)\}$ is
nonincreasing.
Summing \eqref{eq:one-step-estimate} from $j=0$ to $k-1$ and using
$\beta,\gamma\geq0$, $d^2(x_k,x^\star)\geq0$, and the monotonicity of
$\{f(x_j)\}$, we obtain
\[
k\bigl(f(x_k)-f(x^\star)\bigr)
+\lambda_h\bigl(f(x_k)-f(x_0)\bigr)
\leq
\frac{d^2(x_0,x^\star)}{2h}.
\]
Therefore,
\[
(k+\lambda_h)\bigl(f(x_k)-f(x^\star)\bigr)
\leq
\frac{d^2(x_0,x^\star)}{2h}
+\lambda_h\bigl(f(x_0)-f(x^\star)\bigr),
\]
which proves the claim.
\end{proof}

\begin{corollary}[Strongly convex case]
\label{cor:strongly-convex-rate}
Under the assumptions of Theorem~\ref{thm:unified-wdrg-estimate},
Assume further that
$\beta+\gamma>0,$ and $ 1+2\gamma h>0,$
Then
\[
q_h
=
\max\left\{
\frac{\lambda_h}{1+\lambda_h},
\left[
\frac{1-2\beta h}{1+2\gamma h}
\right]_+
\right\}
<1.
\]
Therefore, for every $k\geq0$,
\[
f(x_k)-f(x^\star)
\leq
\left[
f(x_0)-f(x^\star)
+
\frac{\frac{1}{2h}+\gamma}{1+\lambda_h}
d^2(x_0,x^\star)
\right]q_h^k.
\]
\end{corollary}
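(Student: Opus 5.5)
The argument has two parts: first show $q_h<1$, then iterate the contraction $\mathcal E_{k+1}\le q_h\mathcal E_k$ from Theorem~\ref{thm:unified-wdrg-estimate} and convert the Lyapunov bound into a bound on the optimality gap.

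\textbf{Step 1: $q_h<1$.} Take the two entries of the maximum in turn.
For the first entry, $\lambda_h\ge0$ because it is defined as the positive part of a quantity divided by $1-(\alpha-\gamma)h>0$. It is also finite, so $\lambda_h/(1+\lambda_h)<1$.
For the second entry, $1+2\gamma h>0$ by assumption, so
\[
\frac{1-2\beta h}{1+2\gamma h}<1
\iff
1-2\beta h<1+2\gamma h
\iff
(\beta+\gamma)h>0 .
\]
This holds because $h>0$ and $\beta+\gamma>0$. Taking the positive part keeps the value in $[0,1)$. Hence $q_h\in[0,1)$.

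\textbf{Step 2: unroll the contraction.} Induction on Theorem~\ref{thm:unified-wdrg-estimate} gives $\mathcal E_k\le q_h^k\mathcal E_0$ for every $k\ge0$.
To extract $f(x_k)-f(x^\star)$, discard the distance term in $\mathcal E_k$. Its coefficient $\frac1{2h}+\gamma$ is positive because $1+2\gamma h>0$, so
\[
(1+\lambda_h)\bigl(f(x_k)-f(x^\star)\bigr)\le \mathcal E_k\le q_h^k\,\mathcal E_0 .
\]
Divide by $1+\lambda_h\ge1$ and expand $\mathcal E_0$:
\[
f(x_k)-f(x^\star)\le q_h^k\left[f(x_0)-f(x^\star)+\frac{\frac1{2h}+\gamma}{1+\lambda_h}\,d^2(x_0,x^\star)\right],
\]
which is exactly the claimed bound.

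\textbf{Main obstacle.} The only delicate point is the final inequality of Theorem~\ref{thm:unified-wdrg-estimate}. It uses $\frac1{2h}-\beta\le q_h\bigl(\frac1{2h}+\gamma\bigr)$ to absorb the coefficient of $d^2(x_k,x^\star)$. This needs checking when $1-2\beta h<0$: then $q_h$ may use the truncated value $0$, and the left-hand coefficient is negative. That case is fine, since the term $\bigl(\frac1{2h}-\beta\bigr)d^2(x_k,x^\star)$ is then nonpositive and can simply be dropped. I would add a one-line remark to this effect so that the contraction, and hence the corollary, holds for every admissible $h$.
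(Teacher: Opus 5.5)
Your proof is correct and follows the paper's argument. Like the paper, you iterate $\mathcal E_{k+1}\le q_h\mathcal E_k$, drop the distance term (whose coefficient $\frac{1}{2h}+\gamma$ is positive), and divide by $1+\lambda_h$. Your Step~1 also verifies $q_h<1$ explicitly via $1+2\gamma h>0$ and $(\beta+\gamma)h>0$, which the paper asserts in the statement but never checks in its proof, and your remark on the truncated case correctly confirms the coefficient inequality $\frac{1}{2h}-\beta\le q_h\bigl(\frac{1}{2h}+\gamma\bigr)$ used in the theorem.
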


\begin{proof}
By Theorem~\ref{thm:unified-wdrg-estimate},
\[
\mathcal{E}_{k+1}\leq q_h\mathcal{E}_k.
\]
Iterating this inequality gives
\[
\mathcal{E}_k\leq q_h^k\mathcal{E}_0.
\]
Since
\[
\mathcal{E}_k
=
(1+\lambda_h)\bigl(f(x_k)-f(x^\star)\bigr)
+
\left(\frac{1}{2h}+\gamma\right)d^2(x_k,x^\star)
\geq
(1+\lambda_h)\bigl(f(x_k)-f(x^\star)\bigr),
\]
we obtain
\[
f(x_k)-f(x^\star)
\leq
\frac{\mathcal{E}_k}{1+\lambda_h}
\leq
\frac{q_h^k\mathcal{E}_0}{1+\lambda_h}.
\]
Substituting the definition of $\mathcal{E}_0$ yields
\[
f(x_k)-f(x^\star)
\leq
\left[
f(x_0)-f(x^\star)
+
\frac{\frac{1}{2h}+\gamma}{1+\lambda_h}
d^2(x_0,x^\star)
\right]q_h^k.
\]
\end{proof}

For the methods considered in
Theorem~\ref{thm:structure-preserving-certificates}, the optimal admissible
stepsizes minimizing $q_h$ and the corresponding contraction factors are
given explicitly in Table~\ref{tab:best-step-rate} of
Appendix~\ref{app:optimal-strongly-convex-rates}.




\subsection{Convergence analysis of Accelerated c-wDRG Methods}
\label{sec:accelerated-cwdrg}

In this subsection, we study the convergence properties of the accelerated c-wDRG methods. The algorithms for the geodesically convex and geodesically strongly convex settings are given in Algorithms~\ref{alg:accelerated-cwdrg-convex} and \ref{alg:accelerated-cwdrg-strong}, respectively. We then establish the
corresponding convergence guarantees in
Theorems~\ref{thm:cwdrg-accelerated-convex} and
\ref{thm:cwdrg-accelerated-strong}.

First of all, the convergence analysis of the accelerated c-wDRG methods relies on the following two metric-distortion inequalities.

\subsubsection{Metri distortion lemmas}
\label{subsec:kim-yang-distortion}

\begin{lemma}[(Lemma~5.2 in \cite{KimYang2022})]
\label{lem:kim-yang-distortion-I}
Let $p_A,p_B,x\in\mathcal X$ and $v_A\in T_{p_A}\mathcal M$.
Suppose that there exists $r\in[0,1]$ such that
\[
\Log_{p_A}p_B=r v_A.
\]
Define
\[
v_B
:=
P_{p_A\to p_B}
\bigl(v_A-\Log_{p_A}p_B\bigr)
\in T_{p_B}\mathcal M.
\]
Then
\begin{equation}
\label{eq:kim-yang-distortion-I}
\bigl\|v_B-\Log_{p_B}x\bigr\|_{p_B}^2
+(\zeta_D-1)\|v_B\|_{p_B}^2
\le
\bigl\|v_A-\Log_{p_A}x\bigr\|_{p_A}^2
+(\zeta_D-1)\|v_A\|_{p_A}^2.
\end{equation}
\end{lemma}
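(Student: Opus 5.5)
The idea is to reduce \eqref{eq:kim-yang-distortion-I} to a one-variable convexity statement along the geodesic from $p_A$ to $p_B$, in the spirit of the proof of Corollary~\ref{cor:normal-coordinate-comparison}. Set $\gamma(t):=\Exp_{p_A}(t\,\Log_{p_A}p_B)$ for $t\in[0,1]$, so that $\dot\gamma(t)=P_{p_A\to\gamma(t)}(rv_A)$. Let $V(t):=P_{p_A\to\gamma(t)}\bigl(v_A-t\Log_{p_A}p_B\bigr)=(1-tr)P_{p_A\to\gamma(t)}v_A$. This field is parallel up to a scalar factor, satisfies $V(0)=v_A$ and $V(1)=v_B$, and has $\nabla_{\dot\gamma}V=-\dot\gamma$. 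Define
\[
\Phi(t):=\|V(t)-\Log_{\gamma(t)}x\|^2+(\zeta_D-1)\|V(t)\|^2 .
\]
The claim is $\Phi(1)\le\Phi(0)$, and I will prove it by showing $\Phi'(t)\le0$ on $[0,1]$.

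The first step is to expand $\Phi$. Write $\|V-\Log x\|^2=\|V\|^2-2\langle V,\Log_{\gamma(t)}x\rangle+d^2(\gamma(t),x)$, so that $\Phi=\zeta_D\|V\|^2-2\langle V,\Log_{\gamma(t)}x\rangle+d^2(\gamma(t),x)$. I then differentiate term by term.

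\emph{First term.} Since $\|V\|^2=(1-tr)^2\|v_A\|^2$, its derivative is $-2r(1-tr)\|v_A\|^2$.

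\emph{Third term.} By the first variation formula used in the proof of Corollary~\ref{cor:normal-coordinate-comparison}, $\tfrac{d}{dt}d^2(\gamma(t),x)=-2\langle\Log_{\gamma(t)}x,\dot\gamma\rangle$.

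\emph{Middle term.} Its derivative is $-2\langle\nabla_{\dot\gamma}V,\Log x\rangle-2\langle V,\nabla_{\dot\gamma}\Log_{\gamma(t)}x\rangle$. The first piece equals $+2\langle\dot\gamma,\Log x\rangle$, which cancels exactly against the derivative of $d^2$.

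After these cancellations,
\[
\Phi'(t)=-2\zeta_D r(1-tr)\|v_A\|^2-2\langle V,\nabla_{\dot\gamma}\Log_{\gamma(t)}x\rangle .
\]

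The second step is to bound the remaining Hessian term. The key observation is that $V=\frac{1-tr}{r}\dot\gamma$ when $r>0$. (If $r=0$, then $p_B=p_A$ and the claim is trivial.) Hence
\[
-2\langle V,\nabla_{\dot\gamma}\Log x\rangle=\frac{2(1-tr)}{r}\langle\nabla_{\dot\gamma}\Log x,-\dot\gamma\rangle\le\frac{2(1-tr)}{r}\,\zeta_D\|\dot\gamma\|^2=2\zeta_D r(1-tr)\|v_A\|^2,
\]
where the inequality is the upper bound of Lemma~\ref{lem:hessian-comparison-squared-distance} and uses $1-tr\ge0$ because $r\le1$. Therefore $\Phi'(t)\le0$, and integrating over $[0,1]$ gives \eqref{eq:kim-yang-distortion-I}.

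The main obstacle is justifying the derivative computation. This needs smoothness of $\Log_{\gamma(t)}x$ along $\gamma$ (guaranteed by Assumption~\ref{ass:geometry}) and the identification of $\frac{d}{dt}\langle V,\Log x\rangle$ via metric compatibility. The point to watch is that $V$ is collinear with $\dot\gamma$, which is exactly why the one-sided Hessian comparison applies. A second point is the sign of the coefficient: the factor $(1-tr)/r\ge0$ is what makes only the $\zeta_D$ upper bound necessary, and this is where the hypothesis $r\in[0,1]$ enters.
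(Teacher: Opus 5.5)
Your proof is correct. The paper does not prove this lemma itself; it imports the result from Kim and Yang. Your argument is essentially the standard monotonicity-along-the-geodesic proof of that result, and it uses only tools the paper already states: you differentiate the energy along $\gamma$, observe that the first-variation terms cancel, and bound the remaining term using $V=\frac{1-tr}{r}\dot\gamma$, $1-tr\ge 0$, and the upper bound of Lemma~\ref{lem:hessian-comparison-squared-distance}.
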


\begin{lemma}[(Lemma~5.3 in \cite{KimYang2022})]
\label{lem:kim-yang-distortion-II}
Let $p_A,p_B,x\in\mathcal X$ and $v_A\in T_{p_A}\mathcal M$, and define
\[
v_B
:=
P_{p_A\to p_B}
\bigl(v_A-\Log_{p_A}p_B\bigr)
\in T_{p_B}\mathcal M.
\]
Suppose that there exist $a,b\in T_{p_A}\mathcal M$ and $r\in(0,1)$
such that
\[
v_A=a+b,
\qquad
\Log_{p_A}p_B=rb.
\]
Then, for every $\xi\ge\zeta_D$,
\begin{equation}
\label{eq:kim-yang-distortion-II}
\|v_B-\Log_{p_B}x\|_{p_B}^2
+(\xi-1)\|v_B\|_{p_B}^2
\le
\|v_A-\Log_{p_A}x\|_{p_A}^2
+(\xi-1)\|v_A\|_{p_A}^2
+\frac{\xi-\delta_D}{2}
\left(\frac{1}{1-r}-1\right)
\|a\|_{p_A}^2.
\end{equation}
\end{lemma}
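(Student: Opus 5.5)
The plan is to interpolate continuously from $p_A$ to $p_B$ along the geodesic and to differentiate the potential appearing on both sides of \eqref{eq:kim-yang-distortion-II}. Let $\sigma(t):=\Exp_{p_A}(t\,r b)$ for $t\in[0,1]$, so that $\sigma(0)=p_A$, $\sigma(1)=p_B$ and $\dot\sigma(t)=P_{0\to t}(rb)$, where $P_{0\to t}$ denotes parallel transport along $\sigma$. Define the parallel field
\[
w(t):=P_{0\to t}\bigl(v_A-t\,rb\bigr)=P_{0\to t}\bigl(a+(1-tr)b\bigr),
\]
so that $w(0)=v_A$, $w(1)=v_B$, and $\nabla_{\dot\sigma}w=-\dot\sigma$. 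The potential is
\[
\Phi(t):=\|w(t)-\Log_{\sigma(t)}x\|^2+(\xi-1)\|w(t)\|^2
=\xi\|w\|^2-2\langle w,\Log_{\sigma}x\rangle+d^2(\sigma,x),
\]
and the claim is exactly $\Phi(1)\le\Phi(0)+\frac{\xi-\delta_D}{2}\bigl(\frac{1}{1-r}-1\bigr)\|a\|^2$.

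Next I would compute $\Phi'$. Let $\mathcal H_t:=-\nabla\Log_{\sigma(t)}x$, which is the Riemannian Hessian of $\tfrac12 d^2(\cdot,x)$ at $\sigma(t)$ and hence self-adjoint. The derivative of $d^2$ along the curve is $-2\langle\Log_\sigma x,\dot\sigma\rangle$ (the first variation formula used in Corollary~\ref{cor:normal-coordinate-comparison}). The derivative of $\xi\|w\|^2$ is $-2\xi\langle\dot\sigma,w\rangle$, and the derivative of $-2\langle w,\Log_\sigma x\rangle$ is $2\langle\dot\sigma,\Log_\sigma x\rangle+2\langle w,\mathcal H_t\dot\sigma\rangle$. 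The $\Log_\sigma x$ terms cancel, leaving
\[
\Phi'(t)=-2\langle w,(\xi I-\mathcal H_t)\dot\sigma\rangle
=-2r\langle a,M_tb\rangle-2r(1-tr)\langle b,M_tb\rangle,
\]
where $M_t:=\xi I-\mathcal H_t$ and all vectors are transported back to $T_{p_A}\mathcal M$.

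By Lemma~\ref{lem:hessian-comparison-squared-distance}, whose quadratic-form bounds pass to the spectrum because $\mathcal H_t$ is self-adjoint, we have $\delta_D\le\mathcal H_t\le\zeta_D\le\xi$. Hence $0\le M_t\le(\xi-\delta_D)I$. Applying Young's inequality in the semi-inner product induced by $M_t$,
\[
2r|\langle a,M_tb\rangle|\le \frac{r}{2(1-tr)}\langle a,M_ta\rangle+2r(1-tr)\langle b,M_tb\rangle .
\]
The $b$-terms therefore cancel, and
\[
\Phi'(t)\le \frac{r(\xi-\delta_D)}{2(1-tr)}\|a\|^2 .
\]
Integrating over $[0,1]$ gives $\Phi(1)-\Phi(0)\le\frac{\xi-\delta_D}{2}\bigl(-\ln(1-r)\bigr)\|a\|^2$. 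Finally, $-\ln(1-r)\le\frac{r}{1-r}=\frac{1}{1-r}-1$, which yields \eqref{eq:kim-yang-distortion-II}. The case $a=0$ with $\xi=\zeta_D$ recovers Lemma~\ref{lem:kim-yang-distortion-I}, which is a useful sanity check.

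The main obstacle is the derivative computation. One has to be careful that the uncontrolled terms involving $\Log_\sigma x$ cancel exactly; this is where the choice $\nabla w=-\dot\sigma$ is essential. One must also justify the operator (not just quadratic-form) bounds on the Hessian. Smoothness of $d^2(\cdot,x)$ on $\mathcal X$ follows from Assumption~\ref{ass:geometry}, since $\Exp$ is a diffeomorphism there.
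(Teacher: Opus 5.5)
Your proof is correct. The paper gives no proof of this lemma: it is imported verbatim from \cite{KimYang2022}, so there is no in-paper argument to compare against. What you have written is a self-contained replacement for the citation, resting only on the Hessian comparison of Lemma~\ref{lem:hessian-comparison-squared-distance}.

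The key steps check out:
\begin{itemize}
\item The parallel field $w(t)=P_{0\to t}(v_A-trb)$ satisfies $\nabla_{\dot\sigma}w=-\dot\sigma$. This makes the uncontrolled $\Log_{\sigma}x$ terms cancel exactly, leaving $\Phi'(t)=-2\langle w,(\xi I-\mathcal H_t)\dot\sigma\rangle$.
\item The Hessian of $\tfrac12 d^2(\cdot,x)$ is symmetric, and $\xi\ge\zeta_D$. Together these make $\langle\cdot,M_t\cdot\rangle$ a symmetric positive semidefinite form, so Cauchy--Schwarz and Young in that form are legitimate.
\item With the weight $2r(1-tr)$ the $b$-terms cancel, and the bound $M_t\preceq(\xi-\delta_D)I$ only needs the quadratic-form comparison.
\end{itemize}

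Two small points should be stated explicitly. First, $\sigma([0,1])\subseteq\mathcal X$ by geodesic convexity, since $\Log_{p_A}p_B$ inverts $\Exp$ on the minimizing geodesic; this is what makes the comparison constants $\delta_D,\zeta_D$ valid at every $\sigma(t)$. Second, the final multiplication uses $\xi-\delta_D\ge0$.

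Your route also gives a bit more than the statement. The remainder you obtain is $\frac{\xi-\delta_D}{2}\bigl(-\ln(1-r)\bigr)\|a\|^2$, which is strictly sharper for $r\in(0,1)$. The stated factor $\frac{1}{1-r}-1$ is what the cruder pointwise bound $\frac{r}{1-tr}\le\frac{r}{1-r}$ gives before integrating. Setting $a=0$ recovers Lemma~\ref{lem:kim-yang-distortion-I}, as you note.
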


Lemma~\ref{lem:kim-yang-distortion-I} controls the change of the
momentum energy when the extrapolation point is moved from the current
iterate to the accelerated center. Lemma~\ref{lem:kim-yang-distortion-II}
controls the second change of tangent space when the accelerated center
is moved to the new iterate. The additional term in
\eqref{eq:kim-yang-distortion-II} quantifies the metric distortion
caused by curvature. To derive the convergence rates of the accelerated c-wDRG methods,
we first establish the following lemma.

\begin{lemma}
\label{thm:endpoint-reduction}
Let $\theta\in[0,1]$ satisfy
\[
1-2(1-\delta_D)\theta>0,
\]
and define
\begin{equation}
\label{eq:endpoint-effective-constants}
L_{\mathrm R}
:=
\frac{
2\left[
\alpha+\gamma
-(1-\delta_D)(\beta+\gamma)\theta^2
\right]
}{
1-2(1-\delta_D)\theta
},
\qquad
\mu_{\mathrm R}
:=
2(\beta+\gamma).
\end{equation}
Assume
\[
\beta+\gamma\geq0,
\qquad
L_{\mathrm R}>0.
\]
For any $h>0$ satisfying
\[
h\leq\frac{1}{L_{\mathrm R}},
\qquad
1-2\gamma h>0,
\qquad
h\theta\mu_{\mathrm R}\leq1,
\]
define
\begin{equation}
\label{eq:endpoint-chord-step}
s_h
:=
\frac{h}{1-2\gamma h}.
\end{equation}
Let $y,z\in\mathcal X$, set
\[
c:=\gamma_{z,y}(\theta),
\]
and suppose that
\begin{equation}
\label{eq:endpoint-cwdrg-step}
\Delta_c(y,z)
=
-s_h\mathcal G_f(y,z;c).
\end{equation}
Then
\begin{equation}
\label{eq:endpoint-step-representation}
y
=
\Exp_z\left(
-\frac{h}{1-2\gamma h}
P_{c\to z}\mathcal G_f(y,z;c)
\right),
\end{equation}
and, for every $x\in\mathcal X$,
\begin{equation}
\label{eq:endpoint-oracle-model}
f(y)-f(x)
\leq
-\frac{1}{1-2\gamma h}
\left\langle P_{c\to z}\mathcal G_f(y,z;c),\Log_z x\right\rangle_z
-\frac{h}{2(1-2\gamma h)^2}
\left\|\mathcal G_f(y,z;c)\right\|_c^2
-\frac{\mu_{\mathrm R}}{2}d^2(z,x).
\end{equation}
\end{lemma}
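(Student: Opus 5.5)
The plan is to work entirely on the geodesic $\gamma_{z,y}$ and write $w:=\Delta_c(y,z)$. Because $c=\gamma_{z,y}(\theta)$, we have $\Log_c z=-\theta w$ and $\Log_c y=(1-\theta)w$, so $w$ is the (constant-speed) velocity of $\gamma_{z,y}$ at $c$. Velocities of a geodesic are parallel, so $P_{c\to z}w=\dot\gamma_{z,y}(0)=\Log_z y$. Substituting $w=-s_h\mathcal G_f(y,z;c)$ from \eqref{eq:endpoint-cwdrg-step}, with $s_h=h/(1-2\gamma h)$, gives $\Log_z y=-s_h P_{c\to z}\mathcal G_f(y,z;c)$, which is \eqref{eq:endpoint-step-representation}. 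This step is purely kinematic. It also shows that $\mathcal G_f$ is collinear with $w$, a fact used repeatedly below.

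For \eqref{eq:endpoint-oracle-model}, I would start from \eqref{eq:c-wdrg} with $(y,z,x)$ and center $c$, and split $\Delta_c(y,x)=w+\Delta_c(z,x)$. This yields
\[
f(y)-f(x)\le\langle\mathcal G,w\rangle_c+\langle\mathcal G,\Delta_c(z,x)\rangle_c+\alpha\|w\|_c^2-\beta\|\Delta_c(z,x)\|_c^2-\gamma\|w+\Delta_c(z,x)\|_c^2 .
\]
The $\gamma$ term is expanded as $-\gamma\|w\|^2-2\gamma\langle w,\Delta_c(z,x)\rangle-\gamma\|\Delta_c(z,x)\|^2$. Since $w=-s_h\mathcal G$, the cross term combines with $\langle\mathcal G,\Delta_c(z,x)\rangle$ into $(1+2\gamma s_h)\langle\mathcal G,\Delta_c(z,x)\rangle=\frac{1}{1-2\gamma h}\langle\mathcal G,\Delta_c(z,x)\rangle$. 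This explains the choice of $s_h$ and the factor $1/(1-2\gamma h)$. After this, $\|\Delta_c(z,x)\|^2$ appears with coefficient $-(\beta+\gamma)=-\mu_{\mathrm R}/2$. The remaining $\|w\|^2$ terms are $-s_h\|\mathcal G\|^2\cdot\|w\|^2/\|w\|^2$-type quantities plus $(\alpha-\gamma)\|w\|^2$, all expressed through $s_h^2\|\mathcal G\|_c^2$.

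Next, I would transfer quantities from $c$ back to $z$. Set $\phi(t):=\frac12 d^2(\gamma_{z,y}(t),x)$. Then $\phi'(t)=-\langle\Log_{\gamma(t)}x,\dot\gamma(t)\rangle$, and by Lemma~\ref{lem:hessian-comparison-squared-distance}, $\delta_D\|w\|^2\le\phi''\le\zeta_D\|w\|^2$. Because $\mathcal G$ is a multiple of the velocity, both $\langle\mathcal G,\Log_c x\rangle_c$ and $\langle P_{c\to z}\mathcal G,\Log_z x\rangle_z$ equal $\frac{1}{s_h}\phi'$, evaluated at $t=\theta$ and $t=0$ respectively. Their difference is therefore $\frac1{s_h}\int_0^\theta\phi''$, which is controlled by $\theta\|w\|^2$ times $\delta_D$ or $\zeta_D$. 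The term $\langle\mathcal G,\Log_c z\rangle=\theta s_h\|\mathcal G\|^2$ is explicit. For $\|\Delta_c(z,x)\|_c^2$, Corollary~\ref{cor:normal-coordinate-comparison} with $(c,q,p)=(c,z,x)$ relates it to $d^2(z,x)$ up to $\theta^2\|w\|^2$ times curvature constants. This is the source of the $(1-\delta_D)(\beta+\gamma)\theta^2$ term in $L_{\mathrm R}$.

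Finally, I would collect all multiples of $\|w\|_c^2=s_h^2\|\mathcal G\|_c^2$. The target coefficient is $-\frac{h}{2(1-2\gamma h)^2}\|\mathcal G\|^2$, and the linear-in-$\theta$ curvature loss $2(1-\delta_D)\theta$ is exactly what appears in the denominator of $L_{\mathrm R}$. Then $h\le 1/L_{\mathrm R}$ makes the residual coefficient nonpositive, using $h\theta\mu_{\mathrm R}\le1$ to sign a leftover cross term.

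The main obstacle I expect is the sign bookkeeping in this last step. Whether $\delta_D$ or $\zeta_D$ enters each comparison depends on the signs of $\beta+\gamma$ and of $\phi'$-differences. Obtaining only $\delta_D$, as $L_{\mathrm R}$ suggests, may require applying Corollary~\ref{cor:normal-coordinate-comparison} with $\Log_c z$ expressed as a combination of both the inner-product and squared-distance terms, so that the $\zeta_D$ contributions cancel. I would first try to group $\frac{1}{1-2\gamma h}\langle\mathcal G,-\Log_c x\rangle-\frac{\mu_{\mathrm R}}2\|\Delta_c(z,x)\|^2$ as a single quadratic in $\Log_c x$ before comparing it with its counterpart at $z$.
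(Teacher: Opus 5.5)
Your kinematic step matches the paper: parallel velocities give $\Log_z y=-s_hP_{c\to z}\mathcal G_f(y,z;c)$. Your rewriting of \eqref{eq:c-wdrg} is also exact: with $G:=\mathcal G_f(y,z;c)$,
\[
f(y)-f(x)\le\frac{1}{1-2\gamma h}\langle G,\Delta_c(z,x)\rangle_c-(\beta+\gamma)\|\Delta_c(z,x)\|_c^2-\bigl[s_h-(\alpha-\gamma)s_h^2\bigr]\|G\|_c^2 .
\]
The gap is the transfer from $c$ to $z$, which is the whole content of the lemma. The route you actually specify does not give the stated $L_{\mathrm R}$.

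Since $\|\Delta_c(z,x)\|_c^2$ carries the coefficient $-(\beta+\gamma)\le0$, you need a lower bound on it. Corollary~\ref{cor:normal-coordinate-comparison} with $(c,q,p)=(c,z,x)$ only gives $\|\Delta_c(z,x)\|_c^2\ge d^2(z,x)-(\zeta_D-1)\theta^2s_h^2\|G\|_c^2$. Combined with your inner-product comparison (which correctly uses $\delta_D$), this yields the condition $h\le[1-2(1-\delta_D)\theta]/\bigl(2[\alpha+\gamma+(\zeta_D-1)(\beta+\gamma)\theta^2]\bigr)$. That is strictly more restrictive than $h\le1/L_{\mathrm R}$ whenever $\theta>0$, $\beta+\gamma>0$ and $\zeta_D>\delta_D$, for example midpoint or proximal point on a curved manifold. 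Rewriting $\Log_c z$ does not fix this, because the direction of the $c$-based comparison you need is exactly the one governed by $\zeta_D$. Note also that in your route the hypothesis $h\theta\mu_{\mathrm R}\le1$ has no real role, which signals that the decisive step is missing.

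Your closing instinct is right, but it must be followed by a different comparison. Expand $\|\Delta_c(z,x)\|_c^2=\theta^2s_h^2\|G\|_c^2-2\theta s_h\langle G,\Log_cx\rangle_c+d^2(c,x)$ \emph{before} comparing. The net coefficient of $\langle G,\Log_cx\rangle_c$ becomes $-(1-h\theta\mu_{\mathrm R})/(1-2\gamma h)\le0$, and this is exactly where $h\theta\mu_{\mathrm R}\le1$ is used. Then bound both $\langle G,\Log_cx\rangle_c$ and $d^2(c,x)$ from below by expanding along $\gamma_{z,c}$ based at $z$:
\[
\langle G,\Log_cx\rangle_c\ge\langle P_{c\to z}G,\Log_zx\rangle_z+\delta_D\theta s_h\|G\|_c^2,
\]
\[
d^2(c,x)\ge d^2(z,x)+2\theta s_h\langle P_{c\to z}G,\Log_zx\rangle_z+\delta_D\theta^2s_h^2\|G\|_c^2 .
\]
The second bound is the corollary with base point $z$ and $q=c$, not base point $c$. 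Equivalently, the total weight on $\phi''(t)$ for $t\in[0,\theta]$ is $1/h-\mu_{\mathrm R}t\ge0$, so only the lower Hessian bound $\delta_D$ enters. The inner-product coefficient then collapses to $-1/(1-2\gamma h)$. The coefficient multiplying $-\|G\|_c^2$ exceeds $h/(2(1-2\gamma h)^2)$ by exactly $\frac{h[1-2(1-\delta_D)\theta]}{2(1-2\gamma h)^2}(1-hL_{\mathrm R})\ge0$, which proves \eqref{eq:endpoint-oracle-model}.
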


\begin{proof}
Set
\[
G:=\mathcal G_f(y,z;c).
\]
Since
\[
c=\gamma_{z,y}(\theta),
\]
we have
\[
\Log_c z
=
-\theta\Delta_c(y,z),
\qquad
\Log_c y
=
(1-\theta)\Delta_c(y,z).
\]
By \eqref{eq:endpoint-cwdrg-step},
\[
\Log_c z
=
\theta s_hG,
\qquad
\Log_c y
=
-(1-\theta)s_hG.
\]
Consequently,
\[
\Delta_c(y,x)
=
-(1-\theta)s_hG-\Log_cx,
\]
and
\[
\Delta_c(z,x)
=
\theta s_hG-\Log_cx.
\]

Substituting these identities into \eqref{eq:c-wdrg} gives
\begin{equation}
\label{eq:endpoint-center-estimate}
\begin{aligned}
f(y)-f(x)
\leq{}&
-
\left(
1-2s_h[\beta\theta-\gamma(1-\theta)]
\right)
\left\langle G,\Log_cx\right\rangle_c
\\
&-
s_h
\left[
(1-\theta)
-
s_h\left\{
\alpha-\beta\theta^2-\gamma(1-\theta)^2
\right\}
\right]
\|G\|_c^2
-
(\beta+\gamma)d^2(c,x).
\end{aligned}
\end{equation}

Since
\[
1-2s_h[\beta\theta-\gamma(1-\theta)]
=
\frac{1-h\theta\mu_{\mathrm R}}
{1-2\gamma h}
\geq0,
\]
the coefficient of the first term is nonnegative.

Moreover, since $c$ lies on the geodesic from $z$ to $y$,
parallel transport along this geodesic gives
\[
\Log_z y
=
-s_hP_{c\to z}G.
\]

For $\theta>0$, consider
\[
\sigma(t):=\gamma_{z,c}(t),
\qquad
\varphi(t):=\frac12d^2(\sigma(t),x).
\]
Then
\[
\dot\sigma(0)
=
-\theta s_hP_{c\to z}G,
\qquad
\dot\sigma(1)
=
-\theta s_hG.
\]
Since
\[
\operatorname{grad}_p\frac12d^2(p,x)
=
-\Log_px,
\]
we obtain
\[
\varphi'(0)
=
\theta s_h
\left\langle
P_{c\to z}G,\Log_zx
\right\rangle_z,
\]
and
\[
\varphi'(1)
=
\theta s_h
\left\langle
G,\Log_cx
\right\rangle_c.
\]
By Lemma~\ref{lem:hessian-comparison-squared-distance},
\[
\varphi''(t)
\geq
\delta_D\theta^2s_h^2\|G\|_c^2.
\]
Hence
\begin{equation}
\label{eq:endpoint-inner-product-comparison}
\left\langle
G,\Log_cx
\right\rangle_c
\geq
\left\langle
P_{c\to z}G,\Log_zx
\right\rangle_z
+
\delta_D\theta s_h\|G\|_c^2,
\end{equation}
and
\begin{equation}
\label{eq:endpoint-distance-comparison}
d^2(c,x)
\geq
d^2(z,x)
+
2\theta s_h
\left\langle
P_{c\to z}G,\Log_zx
\right\rangle_z
+
\delta_D\theta^2s_h^2\|G\|_c^2.
\end{equation}
For $\theta=0$, both inequalities hold with equality.

Substituting
\eqref{eq:endpoint-inner-product-comparison}
and
\eqref{eq:endpoint-distance-comparison}
into \eqref{eq:endpoint-center-estimate} yields
\begin{equation}
\label{eq:endpoint-pre-oracle}
\begin{aligned}
f(y)-f(x)
\leq{}&
-(1+2\gamma s_h)
\left\langle
P_{c\to z}G,\Log_z x
\right\rangle_z
-(\beta+\gamma)d^2(z,x)
\\
&-
s_h
\left[
(1-\theta)+\delta_D\theta
-
s_h
\left\{
\alpha-\gamma
+
(1-\delta_D)
\bigl[
\gamma\theta(2-\theta)-\beta\theta^2
\bigr]
\right\}
\right]
\|G\|_c^2.
\end{aligned}
\end{equation}
Using
\[
s_h=\frac{h}{1-2\gamma h},
\]
a direct calculation gives
\begin{equation}
\label{eq:endpoint-quadratic-gap}
\begin{aligned}
&
s_h
\left[
(1-\theta)+\delta_D\theta
-
s_h
\left\{
\alpha-\gamma
+
(1-\delta_D)
\bigl[
\gamma\theta(2-\theta)-\beta\theta^2
\bigr]
\right\}
\right]
-
\frac{h}{2(1-2\gamma h)^2}
\\
&\qquad=
\frac{
h\bigl[1-2(1-\delta_D)\theta\bigr]
}{
2(1-2\gamma h)^2
}
\left(1-hL_{\mathrm R}\right)
\geq 0.
\end{aligned}
\end{equation}
Therefore,
\[
f(y)-f(x)
\leq
-\frac{1}{1-2\gamma h}
\left\langle P_{c\to z}G,\Log_z x\right\rangle_z
-\frac{h}{2(1-2\gamma h)^2}\|G\|_c^2
-\frac{\mu_{\mathrm R}}{2}d^2(z,x).
\]
Since parallel transport is an isometry, the preceding estimate becomes
\[
f(y)-f(x)
\leq
-\frac{1}{1-2\gamma h}
\left\langle P_{c\to z}G,\Log_z x\right\rangle_z
-\frac{h}{2(1-2\gamma h)^2}\|G\|_c^2
-\frac{\mu_{\mathrm R}}{2}d^2(z,x).
\]
which proves \eqref{eq:endpoint-oracle-model}.

Finally,
\[
\Log_zy
=
-s_hP_{c\to z}G
=
-\frac{h}{1-2\gamma h}P_{c\to z}G,
\]
and therefore
\[
y
=
\Exp_z\left(
-\frac{h}{1-2\gamma h}P_{c\to z}G
\right),
\]
which proves \eqref{eq:endpoint-step-representation}.
\end{proof}

\begin{remark}
\label{rem:diameter-endpoint-condition}
For $K_{\max}>0$ and $0<\theta\leq1$, the condition
$1-2(1-\delta_D)\theta>0$
is equivalent to the second case of
\eqref{eq:diameter-condition}.
\end{remark}

\subsubsection{Main results}
\label{subsec:accelerated-algorithms}

\begin{algorithm}[t]
\caption{Accelerated c-wDRG for geodesically convex functions}
\label{alg:accelerated-cwdrg-convex}
\begin{algorithmic}
\Require Initial point $x_0$, certificate
$(\alpha,\beta,\gamma,\theta,\xi,T)$, with $\beta+\gamma=0$, number of iterations $K$
\State \textbf{Initialize}
$\bar v_0=0\in T_{x_0}\mathcal M$
\State Set
\[
L_{\mathrm R}
=
\frac{
2\left(
\alpha+\gamma
\right)
}{
1-2(1-\delta_D)\theta
},
\quad
0<h\leq\frac{1}{L_{\mathrm R}},
\quad
s=\frac{h}{1-2\gamma h},
\quad
\lambda_k=\frac{k+2 \xi+T}{2}, \quad \tau_k=\frac{\xi}{\lambda_k+\xi-1} .
\]
\For{$k=0$ \textbf{to} $K-1$}
    \State
    $\displaystyle
    z_k
    =
    \Exp_{x_k}\left(
     \tau_k  \bar v_k
    \right)$

    \State Compute $x_{k+1}\in\mathcal X$ by solving
\Statex
$\displaystyle\qquad
\Delta_{c_k}(x_{k+1},z_k)
=
-s\,
\mathcal G_f(x_{k+1},z_k;c_k)$
\Statex
$\displaystyle\qquad
\text{where }
c_k
:=
\gamma_{z_k,x_{k+1}}(\theta)$
\State Set
$\displaystyle
G_k
:=
\mathcal G_f(x_{k+1},z_k;c_k)$
    \State
    $\displaystyle
    \bar v_{k+1}
    =
    P_{z_k\to x_{k+1}}
    \left(
    P_{x_k\to z_k}
    \left(
    \bar v_k-\Log_{x_k}z_k
    \right)
    -
    \frac{\lambda_ks}{\xi}
    P_{c_k\to z_k}G_k
    -
    \Log_{z_k}x_{k+1}
    \right)$
\EndFor
\Ensure $x_K$
\end{algorithmic}
\end{algorithm}


\begin{algorithm}[t]
\caption{Accelerated c-wDRG for geodesically strongly convex functions}
\label{alg:accelerated-cwdrg-strong}
\begin{algorithmic}
\Require Initial point $x_0$, certificate
$(\alpha,\beta,\gamma,\theta,\xi)$ with $\mu_{\mathrm R}>0$,
number of iterations $K$

\State \textbf{Initialize}
$\bar v_0=0\in T_{x_0}\mathcal M$

\State 
Define
\[
t_{\star}(\xi):=\frac{2 \xi\left(\delta_D+2+3 \xi-4 \zeta_D\right)}{\delta_D+\xi+2 \xi\left(\xi-\zeta_D\right)+\sqrt{\left[\delta_D+\xi+2 \xi\left(\xi-\zeta_D\right)\right]^2+8 \xi^2\left(\xi-\zeta_D\right)\left(\delta_D+2+3 \xi-4 \zeta_D\right)}} .
\]
Set
\[
L_{\mathrm R}
:=
\frac{
2\left[
\alpha+\gamma
-(1-\delta_D)(\beta+\gamma)\theta^2
\right]
}{
1-2(1-\delta_D)\theta
},
\qquad
\mu_{\mathrm R}
:=
2(\beta+\gamma).
\]
Choose
\[
0<h
\leq
\min\left\{
\frac{1}{L_{\mathrm R}},
\frac{t^2_\star(\xi)}{\xi\mu_{\mathrm R}}
\right\}, \quad 1-2\gamma h>0, \quad h<\frac{1}{\xi \mu_R}
\] 
and set
\[
\vartheta
:=
\sqrt{\xi\mu_{\mathrm R}h},
\qquad
s
:=
\frac{h}{1-2\gamma h}.
\]

\For{$k=0$ \textbf{to} $K-1$}

    \State
    $\displaystyle
    z_k
    =
    \Exp_{x_k}\left(
    \frac{\vartheta}{1+\vartheta}
    \bar v_k
    \right)$

    \State Compute $x_{k+1}\in\mathcal X$ by solving
    \Statex
    $\displaystyle\qquad
    \Delta_{c_k}(x_{k+1},z_k)
    =
    -s\,
    \mathcal G_f(x_{k+1},z_k;c_k)$
    \Statex
    $\displaystyle\qquad
    \text{with }
    c_k:=\gamma_{z_k,x_{k+1}}(\theta)$

    \State Set
    $\displaystyle
    G_k:=\mathcal G_f(x_{k+1},z_k;c_k)$

   \State
$\displaystyle
\bar v_{k+1}
=
P_{z_k\to x_{k+1}}
\left(
\left[1-\frac{\vartheta}{\xi}\right]
P_{x_k\to z_k}\left(\bar v_k-\Log_{x_k}z_k\right)
-
\frac{s}{\vartheta}P_{c_k\to z_k}G_k
-
\Log_{z_k}x_{k+1}
\right)$
\EndFor

\Ensure $x_K$
\end{algorithmic}
\end{algorithm}


\begin{theorem}[Accelerated convergence from c-wDRG:
geodesically convex case]
\label{thm:cwdrg-accelerated-convex}
Suppose that the assumptions of
Lemma~\ref{thm:endpoint-reduction}
hold uniformly along the sequence generated by
Algorithm~\ref{alg:accelerated-cwdrg-convex}.
Let $\xi\geq\zeta_D$, and choose $T>0$ such that, for all $k\geq0$,
\[
\frac{\xi-\delta_D}{2}
\left(
\frac{1}{1-r_k}-1
\right)
\leq
(\xi-\zeta_D)
\left(
\frac{1}{(1-\tau_k)^2}-1
\right),
\]
where
\[
\lambda_k:=\frac{k+2\xi+T}{2},
\qquad
\tau_k:=\frac{\xi}{\lambda_k+\xi-1},
\qquad
r_k:=\frac{\xi}{\lambda_k}.
\]

Define
\begin{equation}
\label{eq:accelerated-convex-potential}
\Phi_k^{\mathrm C}
:=
h\lambda^2_{k-1}
\bigl(f(x_k)-f(x^\star)\bigr)
+
\frac{\xi}{2}
\left(
\left\|\bar v_k-\Log_{x_k}x^\star\right\|_{x_k}^2
+
(\xi-1)\|\bar v_k\|_{x_k}^2
\right).
\end{equation}
Then
\[
\Phi_{k+1}^{\mathrm C}
\leq
\Phi_k^{\mathrm C},
\]
and, for every $k\geq0$,
\begin{equation}
\label{eq:accelerated-convex-rate}
f(x_k)-f(x^\star)
\leq
\frac{
(2\xi+T-1)^2
\bigl(f(x_0)-f(x^\star)\bigr)
+
\dfrac{2\xi}{h}d^2(x_0,x^\star)
}{
(k-1+2\xi+T)^2
}.
\end{equation}
\end{theorem}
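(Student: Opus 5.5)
The plan is to reduce the accelerated analysis to a Kim--Yang type potential argument, using Lemma~\ref{thm:endpoint-reduction} as the only place where the c-wDRG certificate enters. Since $\beta+\gamma=0$, we have $\mu_{\mathrm R}=0$ and $L_{\mathrm R}=2(\alpha+\gamma)/(1-2(1-\delta_D)\theta)$. Setting $g_k:=\frac{1}{1-2\gamma h}P_{c_k\to z_k}G_k\in T_{z_k}\mathcal M$, Lemma~\ref{thm:endpoint-reduction} applied with $(y,z)=(x_{k+1},z_k)$ yields two facts. First, $\Log_{z_k}x_{k+1}=-h g_k$, so the new iterate is an exact Riemannian gradient-type step from $z_k$ with the effective gradient $g_k$. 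Second, for every $x\in\mathcal X$,
\[
f(x_{k+1})-f(x)\le -\langle g_k,\Log_{z_k}x\rangle_{z_k}-\tfrac h2\|g_k\|_{z_k}^2 .
\]
From here on, the proof treats $g_k$ as an abstract oracle satisfying this inequality, which is exactly the inequality used in Kim and Yang's accelerated method. We note that $s P_{c_k\to z_k}G_k = h g_k$, so the momentum update reads $\bar v_{k+1}=P_{z_k\to x_{k+1}}\bigl(v_k^{z}-\tfrac{\lambda_k h}{\xi}g_k-\Log_{z_k}x_{k+1}\bigr)$, where $v_k^{z}:=P_{x_k\to z_k}(\bar v_k-\Log_{x_k}z_k)$.

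The potential decrease will be obtained in three tangent-space moves.

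\textbf{Step 1: transport from $x_k$ to $z_k$.} Since $\Log_{x_k}z_k=\tau_k\bar v_k$ with $\tau_k\in[0,1]$, Lemma~\ref{lem:kim-yang-distortion-I} (or its $\xi$-version, valid since $\xi\ge\zeta_D$) bounds the momentum energy at $z_k$ by that at $x_k$. I expect the sharper form to be needed here, gaining a factor $\frac{1}{(1-\tau_k)^2}-1$ times $(\xi-\zeta_D)\|v_k^z\|^2$, because $v_k^z=(1-\tau_k)P\bar v_k$.

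\textbf{Step 2: the gradient step inside $T_{z_k}\mathcal M$.} Write $w:=v_k^z-\frac{\lambda_k h}{\xi}g_k$ and expand $\|w-\Log_{z_k}x^\star\|^2+(\xi-1)\|w\|^2$ algebraically. This produces cross terms $\langle g_k,\Log_{z_k}x^\star\rangle$ and $\langle g_k,v_k^z\rangle$. Apply the oracle inequality with $x=x^\star$ (weight $\lambda_k$) and with $x=x_k$ (weight $\lambda_k^2-\lambda_k$, using $\lambda_{k-1}^2\le\lambda_k^2-\lambda_k$, which the choice $\lambda_k=(k+2\xi+T)/2$ guarantees). Because $\Log_{z_k}x_k=-v_k^z\tau_k/(1-\tau_k)$, the choice $\tau_k=\xi/(\lambda_k+\xi-1)$ is precisely what cancels the $\langle g_k,v_k^z\rangle$ terms. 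The $\|g_k\|^2$ terms close thanks to the $-\tfrac h2\|g_k\|^2$ in the oracle inequality.

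\textbf{Step 3: transport from $z_k$ to $x_{k+1}$.} Apply Lemma~\ref{lem:kim-yang-distortion-II} with $p_A=z_k$, $p_B=x_{k+1}$, $a=v_k^z$, $b=-\frac{\lambda_k h}{\xi}g_k$, and $\Log_{z_k}x_{k+1}=-hg_k=r_k b$ with $r_k=\xi/\lambda_k$. This incurs the extra term $\frac{\xi-\delta_D}{2}\bigl(\frac1{1-r_k}-1\bigr)\|v_k^z\|^2$. By the hypothesis on $T$, this term is absorbed by the gain $(\xi-\zeta_D)\bigl(\frac{1}{(1-\tau_k)^2}-1\bigr)\|v_k^z\|^2$ banked in Step~1. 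Chaining the three steps gives $\Phi^{\mathrm C}_{k+1}\le\Phi^{\mathrm C}_k$.

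\textbf{Main obstacle and the rate.} The main obstacle is the bookkeeping in Step~2: matching the coefficients (the factors $\xi/2$ and $\lambda_k^2$ versus $\lambda_{k-1}^2$, and the role of $(\xi-1)\|\cdot\|^2$) so that both the linear terms in $g_k$ and the curvature residual cancel exactly. I would first verify this in the flat case $\delta_D=\zeta_D=1$ and then insert the distortion terms. Once monotonicity holds, the rate follows from $h\lambda_{k-1}^2(f(x_k)-f^\star)\le\Phi^{\mathrm C}_k\le\Phi^{\mathrm C}_0$, together with $\bar v_0=0$, which gives $\Phi^{\mathrm C}_0=h\lambda_{-1}^2(f(x_0)-f^\star)+\frac\xi2 d^2(x_0,x^\star)$ and $\lambda_{-1}=(2\xi+T-1)/2$. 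Multiplying by $4/(h(k-1+2\xi+T)^2)$ yields \eqref{eq:accelerated-convex-rate}.
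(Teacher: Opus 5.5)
Your proposal follows the paper's proof essentially step for step. The paper likewise reduces to the oracle $g_k$ via Lemma~\ref{thm:endpoint-reduction}, applies Lemma~\ref{lem:kim-yang-distortion-I} at the $\zeta_D$ level to bank the gain $(\xi-\zeta_D)\bigl(\frac{1}{(1-\tau_k)^2}-1\bigr)\|v_k\|^2$, combines the oracle inequalities with weights $\lambda_k$ and $\lambda_k^2-\lambda_k$ so that the $\langle g_k,v_k\rangle$ and $\|g_k\|^2$ terms cancel, and applies Lemma~\ref{lem:kim-yang-distortion-II} with $r_k=\xi/\lambda_k$, absorbing its extra term via the hypothesis on $T$.

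One small slip: the inequality you need, which the choice of $\lambda_k$ actually gives, is $\lambda_k^2-\lambda_k\le\lambda_{k-1}^2$, since $\lambda_{k-1}^2=\lambda_k^2-\lambda_k+\tfrac14$. It is used together with $f(x_k)-f(x^\star)\ge0$ to replace $h(\lambda_k^2-\lambda_k)$ by $h\lambda_{k-1}^2$; the direction you wrote is false.
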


\begin{proof}
For $p\in\mathcal X$ and $v\in T_p\mathcal M$, define
\[
\mathcal E_p(v)
:=
\left\|v-\Log_p x^\star\right\|_p^2
+
(\xi-1)\|v\|_p^2.
\]
Let
\[
0<h\leq\frac{1}{L_{\mathrm R}},
\qquad
s:=\frac{h}{1-2\gamma h},
\]
and define
\[
G_k
:=
\mathcal G_f(x_{k+1},z_k;c_k),
\qquad
g_k
:=
\frac{1}{1-2\gamma h}
P_{c_k\to z_k}G_k.
\]
By Lemma~\ref{thm:endpoint-reduction},
\begin{equation}
\label{eq:convex-endpoint-step}
x_{k+1}
=
\Exp_{z_k}(-hg_k),
\end{equation}
and, since $\mu_{\mathrm R}=0$ in the geodesically convex case,
\begin{equation}
\label{eq:convex-endpoint-oracle}
f(x_{k+1})-f(x)
\leq
-\left\langle
g_k,\Log_{z_k}x
\right\rangle_{z_k}
-\frac{h}{2}\|g_k\|_{z_k}^2
\qquad
\text{for all }x\in\mathcal X.
\end{equation}

Define
\[
v_k
:=
P_{x_k\to z_k}
\left(
\bar v_k-\Log_{x_k}z_k
\right)
\in T_{z_k}\mathcal M,
\]
and
\[
\widetilde v_{k+1}
:=
v_k
-
\frac{h\lambda_k}{\xi}g_k.
\]
Since
\[
\frac{h}{1-2\gamma h}=s,
\]
this is precisely
\[
\widetilde v_{k+1}
=
v_k
-
\frac{\lambda_ks}{\xi}
P_{c_k\to z_k}G_k,
\]
as in Algorithm~\ref{alg:accelerated-cwdrg-convex}. Moreover,
\[
\bar v_{k+1}
=
P_{z_k\to x_{k+1}}
\left(
\widetilde v_{k+1}
-
\Log_{z_k}x_{k+1}
\right).
\]

Since
\[
z_k=\Exp_{x_k}(\tau_k\bar v_k),
\]
we have
\[
v_k
=
P_{x_k\to z_k}
\bigl((1-\tau_k)\bar v_k\bigr),
\]
and therefore
\[
\Log_{z_k}x_k
=
-\frac{\tau_k}{1-\tau_k}v_k
=
-\frac{\xi}{\lambda_k-1}v_k.
\]
Applying \eqref{eq:convex-endpoint-oracle} with
$x=x^\star$ and $x=x_k$, multiplying the two inequalities by
$\lambda_k$ and $\lambda_k^2-\lambda_k$, respectively, and adding
them gives
\[
\lambda_k^2\bigl(f(x_{k+1})-f(x^\star)\bigr)
-(\lambda_k^2-\lambda_k)\bigl(f(x_k)-f(x^\star)\bigr)
\leq
\lambda_k\left\langle g_k,\xi v_k-\Log_{z_k}x^\star\right\rangle_{z_k}
-\frac{h\lambda_k^2}{2}\|g_k\|_{z_k}^2.
\]
On the other hand, the definition of $\widetilde v_{k+1}$ yields
\[
\frac{\xi}{2}
\left[
\mathcal E_{z_k}(\widetilde v_{k+1})
-
\mathcal E_{z_k}(v_k)
\right]
=
-h\lambda_k
\left\langle
g_k,\xi v_k-\Log_{z_k}x^\star
\right\rangle_{z_k}
+
\frac{h^2\lambda_k^2}{2}\|g_k\|_{z_k}^2.
\]
Since
\[
\lambda_k^2-\lambda_k
\leq
\lambda_{k-1}^2,
\]
we obtain
\begin{equation}
\label{eq:convex-endpoint-space-estimate}
h\lambda_k^2
\bigl(f(x_{k+1})-f(x^\star)\bigr)
+
\frac{\xi}{2}
\mathcal E_{z_k}(\widetilde v_{k+1})
\leq
h\lambda_{k-1}^2
\bigl(f(x_k)-f(x^\star)\bigr)
+
\frac{\xi}{2}
\mathcal E_{z_k}(v_k).
\end{equation}
Furthermore,
\[
\|v_k\|_{z_k}
=
(1-\tau_k)\|\bar v_k\|_{x_k}.
\]
Applying Lemma~\ref{lem:kim-yang-distortion-I} with
$p_A=x_k$, $p_B=z_k$, $v_A=\bar v_k$, $v_B=v_k$, and $x=x^\star$
gives
\[
\left\|v_k-\Log_{z_k}x^\star\right\|_{z_k}^2
+
(\zeta_D-1)\|v_k\|_{z_k}^2
\leq
\left\|\bar v_k-\Log_{x_k}x^\star\right\|_{x_k}^2
+
(\zeta_D-1)\|\bar v_k\|_{x_k}^2.
\]
Since $\xi\geq\zeta_D$, it follows that
\begin{equation}
\label{eq:convex-first-distortion-bound}
\mathcal E_{z_k}(v_k)
\leq
\mathcal E_{x_k}(\bar v_k)
-
(\xi-\zeta_D)
\left(
\frac{1}{(1-\tau_k)^2}-1
\right)
\|v_k\|_{z_k}^2.
\end{equation}
Next, define
\[
b_k
:=
\widetilde v_{k+1}-v_k
=
-\frac{h\lambda_k}{\xi}g_k.
\]
By \eqref{eq:convex-endpoint-step},
\[
\Log_{z_k}x_{k+1}
=
-hg_k
=
\frac{\xi}{\lambda_k}b_k
=
r_kb_k.
\]
Hence Lemma~\ref{lem:kim-yang-distortion-II}, applied with
$p_A=z_k$, $p_B=x_{k+1}$, $a=v_k$, $b=b_k$,
$v_A=\widetilde v_{k+1}$, $v_B=\bar v_{k+1}$,
$r=r_k$, and $x=x^\star$, yields
\begin{equation}
\label{eq:convex-second-distortion-bound}
\mathcal E_{x_{k+1}}(\bar v_{k+1})
\leq
\mathcal E_{z_k}(\widetilde v_{k+1})
+
\frac{\xi-\delta_D}{2}
\left(
\frac{1}{1-r_k}-1
\right)
\|v_k\|_{z_k}^2.
\end{equation}
Combining
\eqref{eq:convex-endpoint-space-estimate},
\eqref{eq:convex-first-distortion-bound}, and
\eqref{eq:convex-second-distortion-bound}, we obtain
\[
\begin{aligned}
h\lambda_k^2
\bigl(f(x_{k+1})-f(x^\star)\bigr)
+
\frac{\xi}{2}\mathcal E_{x_{k+1}}(\bar v_{k+1})
\leq{}&
h\lambda_{k-1}^2
\bigl(f(x_k)-f(x^\star)\bigr)
+
\frac{\xi}{2}\mathcal E_{x_k}(\bar v_k)
\\
&+
\frac{\xi}{2}
\left[
\frac{\xi-\delta_D}{2}
\left(\frac{1}{1-r_k}-1\right)
-
(\xi-\zeta_D)
\left(\frac{1}{(1-\tau_k)^2}-1\right)
\right]
\|v_k\|_{z_k}^2.
\end{aligned}
\]
For the parameters $\tau_k$ and $r_k$ satisfy
\[
\frac{\xi-\delta_D}{2}
\left(
\frac{1}{1-r_k}-1
\right)
\leq
(\xi-\zeta_D)
\left(
\frac{1}{(1-\tau_k)^2}-1
\right),
\]
so the last term is nonpositive. Hence
\[
h\lambda_k^2
\bigl(f(x_{k+1})-f(x^\star)\bigr)
+
\frac{\xi}{2}\mathcal E_{x_{k+1}}(\bar v_{k+1})
\leq
h\lambda_{k-1}^2
\bigl(f(x_k)-f(x^\star)\bigr)
+
\frac{\xi}{2}\mathcal E_{x_k}(\bar v_k).
\]
By \eqref{eq:accelerated-convex-potential}, this is precisely
\[
\Phi_{k+1}^{\mathrm C}
\leq
\Phi_k^{\mathrm C}.
\]

Since $\mathcal E_{x_k}(\bar v_k)\geq0$ and
$\Phi_k^{\mathrm C}\leq\Phi_0^{\mathrm C}$,
\[
f(x_k)-f(x^\star)
\leq
\frac{
4\Phi_0^{\mathrm C}
}{
h(k-1+2\xi+T)^2
}.
\]
Since $\bar v_0=0$,
\[
\Phi_0^{\mathrm C}
=
\frac{h(2\xi+T-1)^2}{4}
\bigl(f(x_0)-f(x^\star)\bigr)
+
\frac{\xi}{2}d^2(x_0,x^\star).
\]
Therefore,
\[
f(x_k)-f(x^\star)
\leq
\frac{
(2\xi+T-1)^2
\bigl(f(x_0)-f(x^\star)\bigr)
+
\dfrac{2\xi}{h}d^2(x_0,x^\star)
}{
(k-1+2\xi+T)^2
},
\]
which proves \eqref{eq:accelerated-convex-rate}.
\end{proof}

\begin{theorem}[Accelerated convergence from c-wDRG:
geodesically strongly convex case]
\label{thm:cwdrg-accelerated-strong}
Let the iterates and auxiliary points be generated by
Algorithm~\ref{alg:accelerated-cwdrg-strong}.
Assume $L_R>0, \mu_{\mathrm R}>0$, 
\[
\xi \geq \zeta_D, \quad \delta_D+2+3 \xi-4 \zeta_D>0 .
\]
and define
\begin{equation}
\label{eq:strong-distortion-threshold}
t_{\star}(\xi):=\frac{2 \xi\left(\delta_D+2+3 \xi-4 \zeta_D\right)}{\delta_D+\xi+2 \xi\left(\xi-\zeta_D\right)+\sqrt{\left[\delta_D+\xi+2 \xi\left(\xi-\zeta_D\right)\right]^2+8 \xi^2\left(\xi-\zeta_D\right)\left(\delta_D+2+3 \xi-4 \zeta_D\right)}} .
\end{equation}
Choose $h>0$ satisfying
\begin{equation}
\label{eq:strong-admissible-step}
0<h
\leq
\min\left\{
\frac{1}{L_{\mathrm R}},
\frac{t^2_\star(\xi)}{\xi\mu_{\mathrm R}}
\right\},\quad 1-2\gamma h>0, \quad h<\frac{1}{\xi \mu_R}
\end{equation}
and define
\[
\vartheta
:=
\sqrt{\xi\mu_{\mathrm R}h}.
\]
Then
\begin{equation}
\label{eq:accelerated-strong-potential}
\Phi_k^{\mathrm{SC}}
:=
\left(
1-\frac{\vartheta}{\xi}
\right)^{-k}
\left[
f(x_k)-f(x^\star)
+
\frac{\mu_{\mathrm R}}{2}
\left(
\left\|\bar v_k-\Log_{x_k}x^\star\right\|_{x_k}^2
+
(\xi-1)\|\bar v_k\|_{x_k}^2
\right)
\right]
\end{equation}
satisfies
\[
\Phi_{k+1}^{\mathrm{SC}}
\leq
\Phi_k^{\mathrm{SC}}.
\]
Consequently, for every $k\geq0$,
\begin{equation}
\label{eq:accelerated-strong-rate}
f(x_k)-f(x^\star)
\leq
\left[
f(x_0)-f(x^\star)
+
\frac{\mu_{\mathrm R}}{2}d^2(x_0,x^\star)
\right]
\left(
1-\sqrt{\frac{\mu_{\mathrm R}h}{\xi}}
\right)^k.
\end{equation}
\end{theorem}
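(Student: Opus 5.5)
We mirror the proof of Theorem~\ref{thm:cwdrg-accelerated-convex}, replacing the polynomial weights by the geometric weight $(1-\vartheta/\xi)^{-k}$. Set $\mathcal E_p(v):=\|v-\Log_px^\star\|_p^2+(\xi-1)\|v\|_p^2$ and $g_k:=(1-2\gamma h)^{-1}P_{c_k\to z_k}G_k$. Lemma~\ref{thm:endpoint-reduction} is applicable: $h\le 1/L_{\mathrm R}$, $1-2\gamma h>0$, and $h\theta\mu_{\mathrm R}\le h\xi\mu_{\mathrm R}<1$ since $\xi\ge\zeta_D\ge1\ge\theta$. It gives $x_{k+1}=\Exp_{z_k}(-hg_k)$ and, for every $x\in\mathcal X$,
\[
f(x_{k+1})-f(x)\le-\langle g_k,\Log_{z_k}x\rangle_{z_k}-\tfrac h2\|g_k\|_{z_k}^2-\tfrac{\mu_{\mathrm R}}2d^2(z_k,x).
\]
Define
\[
v_k:=P_{x_k\to z_k}(\bar v_k-\Log_{x_k}z_k)=(1-\tfrac{\vartheta}{1+\vartheta})P_{x_k\to z_k}\bar v_k,
\]
so that $\Log_{z_k}x_k=-\vartheta v_k$. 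Define also
\[
\widetilde v_{k+1}:=(1-\tfrac{\vartheta}{\xi})v_k-\tfrac{h}{\vartheta}g_k,
\]
which is the bracket in Algorithm~\ref{alg:accelerated-cwdrg-strong} because $s/\vartheta\cdot P_{c_k\to z_k}G_k=(h/\vartheta)g_k$. Then $\bar v_{k+1}=P_{z_k\to x_{k+1}}(\widetilde v_{k+1}-\Log_{z_k}x_{k+1})$.

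\textbf{Step 1 (one-step estimate in $T_{z_k}\mathcal M$).} Take the convex combination of the oracle inequality with weight $\vartheta/\xi$ at $x=x^\star$ and weight $1-\vartheta/\xi$ at $x=x_k$. At $x=x^\star$, keep $-\tfrac{\mu_{\mathrm R}}2d^2(z_k,x^\star)=-\tfrac{\mu_{\mathrm R}}2\|\Log_{z_k}x^\star\|^2$. At $x=x_k$, drop the strong-convexity term or keep $-\tfrac{\mu_{\mathrm R}}2\vartheta^2\|v_k\|^2$.

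Next, expand $\tfrac{\mu_{\mathrm R}}2\mathcal E_{z_k}(\widetilde v_{k+1})$ directly. Using $\mu_{\mathrm R}h=\vartheta^2/\xi$, the cross term $\langle g_k,\cdot\rangle$ cancels the linear terms of the combined oracle inequality. The $\|g_k\|^2$ coefficient is $\tfrac{\mu_{\mathrm R}}2\xi h^2/\vartheta^2=\tfrac h2$, which cancels $-\tfrac h2\|g_k\|^2$. By convexity of $\|\cdot\|^2$ along the combination $(1-\vartheta/\xi)v_k+(\vartheta/\xi)\cdot 0$, the terms without $g_k$ are bounded by
\[
(1-\tfrac{\vartheta}{\xi})\tfrac{\mu_{\mathrm R}}2\mathcal E_{z_k}(v_k)+\tfrac{\vartheta}{\xi}\tfrac{\mu_{\mathrm R}}2\|\Log_{z_k}x^\star\|^2
\]
plus a residual multiple of $\|v_k\|^2$. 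The $\|\Log_{z_k}x^\star\|^2$ piece is absorbed by the kept strong-convexity term.

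The result should be
\[
f(x_{k+1})-f^\star+\tfrac{\mu_{\mathrm R}}2\mathcal E_{z_k}(\widetilde v_{k+1})\le(1-\tfrac{\vartheta}{\xi})\bigl[f(x_k)-f^\star+\tfrac{\mu_{\mathrm R}}2\mathcal E_{z_k}(v_k)\bigr]+C_1(\vartheta)\mu_{\mathrm R}\|v_k\|^2,
\]
with an explicit coefficient $C_1$ that is quadratic in $\vartheta$.

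\textbf{Step 2 (metric distortion).} Lemma~\ref{lem:kim-yang-distortion-I} with $r=\tfrac{\vartheta}{1+\vartheta}$, together with $\xi\ge\zeta_D$, gives
\[
\mathcal E_{z_k}(v_k)\le\mathcal E_{x_k}(\bar v_k)-(\xi-\zeta_D)\bigl((1+\vartheta)^2-1\bigr)\|v_k\|^2.
\]
Lemma~\ref{lem:kim-yang-distortion-II} applies with $a=(1-\vartheta/\xi)v_k$ and $b=-(h/\vartheta)g_k$, since then $\Log_{z_k}x_{k+1}=-hg_k=\vartheta b$, so $r=\vartheta\in(0,1)$. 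It adds $\tfrac{\xi-\delta_D}2\tfrac{\vartheta}{1-\vartheta}(1-\vartheta/\xi)^2\|v_k\|^2$.

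Collecting terms, $\Phi_{k+1}^{\mathrm{SC}}\le\Phi_k^{\mathrm{SC}}$ reduces to nonpositivity of one scalar coefficient of $\|v_k\|^2$ as a function of $t=\vartheta$. The expected main obstacle is to show that this coefficient is nonpositive exactly when $\vartheta\le t_\star(\xi)$. After clearing the denominator $1-\vartheta$ and dividing by $\vartheta$, I expect the condition to be a quadratic $-At^2-Bt+C\le0$ with $A=2\xi(\xi-\zeta_D)$ (up to scaling), $B=\delta_D+\xi+2\xi(\xi-\zeta_D)$, and $C=2\xi(\delta_D+2+3\xi-4\zeta_D)$. Its positive root, written in rationalized form, is $t_\star(\xi)$. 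The hypothesis $\delta_D+2+3\xi-4\zeta_D>0$ guarantees that the root is positive, and $h\le t_\star^2/(\xi\mu_{\mathrm R})$ is exactly $\vartheta\le t_\star$. I would first carry out the bookkeeping symbolically and match coefficients against this formula, adjusting which strong-convexity term is retained in Step 1 if they do not match.

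\textbf{Step 3 (rate).} Iterate $\Phi^{\mathrm{SC}}_k\le\Phi^{\mathrm{SC}}_0$ and use $\bar v_0=0$, so that $\Phi^{\mathrm{SC}}_0=f(x_0)-f^\star+\tfrac{\mu_{\mathrm R}}2d^2(x_0,x^\star)$. Drop the nonnegative $\mathcal E$ term at step $k$; this gives the bound with factor $(1-\vartheta/\xi)^k=(1-\sqrt{\mu_{\mathrm R}h/\xi})^k$.
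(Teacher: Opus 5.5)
Your proposal follows essentially the same route as the paper's proof: the endpoint reduction, the same $v_k$ and $\widetilde v_{k+1}$, the $(\vartheta/\xi,\,1-\vartheta/\xi)$-weighted combination of the oracle inequality at $x^\star$ and $x_k$ with square completion, Lemmas~\ref{lem:kim-yang-distortion-I} and~\ref{lem:kim-yang-distortion-II} with $r=\vartheta/(1+\vartheta)$ and $r=\vartheta$, and a final scalar quadratic condition on $\vartheta$. When you carry out Step~1, use the exact identity $\mathcal E_{z_k}((1-\tfrac{\vartheta}{\xi})v_k)=(1-\tfrac{\vartheta}{\xi})\mathcal E_{z_k}(v_k)+\tfrac{\vartheta}{\xi}\|\Log_{z_k}x^\star\|_{z_k}^2-\vartheta(1-\tfrac{\vartheta}{\xi})\|v_k\|_{z_k}^2$ rather than plain Jensen, because the last term produces the ``$+2$'' in $\delta_D+2+3\xi-4\zeta_D$; dropping the $x_k$ strong-convexity term then gives the paper's residual $-\tfrac{\mu_{\mathrm R}}{2}\vartheta(1-\vartheta/\xi)\|v_k\|_{z_k}^2$ and the condition $2\xi(\xi-\zeta_D)\vartheta^2+[\delta_D+\xi+2\xi(\xi-\zeta_D)]\vartheta-\xi(\delta_D+2+3\xi-4\zeta_D)\le0$, so your guessed form $-At^2-Bt+C\le0$ has the inequality reversed and the constant term doubled.
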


\begin{proof}
For $p\in\mathcal X$ and $v\in T_p\mathcal M$, define
\[
\mathcal E_p(v)
:=
\left\|v-\Log_p x^\star\right\|_p^2
+
(\xi-1)\|v\|_p^2.
\]
Set
\[
\vartheta
:=
\sqrt{\xi\mu_{\mathrm R}h},
\qquad
s
:=
\frac{h}{1-2\gamma h}.
\]
By the stepsize condition,
\[
\vartheta
=
\sqrt{\xi\mu_{\mathrm R}h}
\leq
t_{\star}(\xi).
\]
Moreover, since $0\leq\theta\leq1$, $t_{\star}(\xi)\leq1$, and $\xi\geq1$,
\[
h\theta\mu_{\mathrm R}
\leq
\frac{\theta t_{\star}(\xi)^2}{\xi}
\leq1,
\]
so the stepsize conditions of
Lemma~\ref{thm:endpoint-reduction} are satisfied.

Let
\[
G_k
:=
\mathcal G_f(x_{k+1},z_k;c_k),
\qquad
g_k
:=
\frac{1}{1-2\gamma h}
P_{c_k\to z_k}G_k.
\]
By Lemma~\ref{thm:endpoint-reduction},
\begin{equation}
\label{eq:strong-endpoint-step}
x_{k+1}
=
\Exp_{z_k}(-hg_k),
\end{equation}
and, for every $x\in\mathcal X$,
\begin{equation}
\label{eq:strong-endpoint-oracle}
f(x_{k+1})-f(x)
\leq
-\left\langle g_k,\Log_{z_k}x\right\rangle_{z_k}
-\frac{h}{2}\|g_k\|_{z_k}^2
-\frac{\mu_{\mathrm R}}{2}d^2(z_k,x).
\end{equation}

Define
\[
v_k
:=
P_{x_k\to z_k}
\left(
\bar v_k-\Log_{x_k}z_k
\right),
\]
and
\[
\widetilde v_{k+1}
:=
\left(1-\frac{\vartheta}{\xi}\right)v_k
-
\frac{\vartheta}{\xi\mu_{\mathrm R}}g_k.
\]
Since
\[
g_k
=
\frac{s}{h}P_{c_k\to z_k}G_k,
\qquad
\frac{\vartheta}{\xi\mu_{\mathrm R}h}
=
\frac{1}{\vartheta},
\]
we have
\[
\frac{\vartheta}{\xi\mu_{\mathrm R}}g_k
=
\frac{s}{\vartheta}P_{c_k\to z_k}G_k,
\]
so this is exactly the momentum update used in
Algorithm~\ref{alg:accelerated-cwdrg-strong}. Moreover,
\[
\bar v_{k+1}
=
P_{z_k\to x_{k+1}}
\left(
\widetilde v_{k+1}
-\Log_{z_k}x_{k+1}
\right).
\]

Since
\[
z_k
=
\Exp_{x_k}
\left(
\frac{\vartheta}{1+\vartheta}\bar v_k
\right),
\]
we have
\[
v_k
=
P_{x_k\to z_k}
\left(
\frac{1}{1+\vartheta}\bar v_k
\right),
\qquad
\Log_{z_k}x_k
=
-\vartheta v_k.
\]

Using \eqref{eq:strong-endpoint-oracle} together with the above
momentum identities, the strongly convex square-completion argument
gives
\begin{equation}
\label{eq:strong-endpoint-space-estimate}
\begin{aligned}
f(x_{k+1})-f(x^\star)
+
\frac{\mu_{\mathrm R}}{2}\mathcal E_{z_k}(\widetilde v_{k+1})
\leq{}&
\left(1-\frac{\vartheta}{\xi}\right)
\left[
f(x_k)-f(x^\star)
+
\frac{\mu_{\mathrm R}}{2}\mathcal E_{z_k}(v_k)
\right]
\\
&-
\frac{\mu_{\mathrm R}}{2}
\vartheta
\left(1-\frac{\vartheta}{\xi}\right)
\|v_k\|_{z_k}^2.
\end{aligned}
\end{equation}

For the extrapolation $x_k\to z_k$, applying
Lemma~\ref{lem:kim-yang-distortion-I} with
$p_A=x_k$, $p_B=z_k$, $v_A=\bar v_k$, $v_B=v_k$, and $x=x^\star$
gives
\[
\left\|v_k-\Log_{z_k}x^\star\right\|_{z_k}^2
+
(\zeta_D-1)\|v_k\|_{z_k}^2
\leq
\left\|\bar v_k-\Log_{x_k}x^\star\right\|_{x_k}^2
+
(\zeta_D-1)\|\bar v_k\|_{x_k}^2.
\]
Since
\[
\|v_k\|_{z_k}
=
\frac{1}{1+\vartheta}\|\bar v_k\|_{x_k},
\]
and $\xi\geq\zeta_D$, it follows that
\begin{equation}
\label{eq:strong-first-distortion-bound}
\mathcal E_{z_k}(v_k)
\leq
\mathcal E_{x_k}(\bar v_k)
-
(\xi-\zeta_D)
\left[
(1+\vartheta)^2-1
\right]
\|v_k\|_{z_k}^2.
\end{equation}
Furthermore,
\[
\widetilde v_{k+1}
=
\left(1-\frac{\vartheta}{\xi}\right)v_k
-
\frac{\vartheta}{\xi\mu_{\mathrm R}}g_k,
\]
and, since $\vartheta^2=\xi\mu_{\mathrm R}h$,
\[
\Log_{z_k}x_{k+1}
=
-hg_k
=
\vartheta
\left(
-\frac{\vartheta}{\xi\mu_{\mathrm R}}g_k
\right).
\]
Therefore, applying Lemma~\ref{lem:kim-yang-distortion-II} with
$p_A=z_k$, $p_B=x_{k+1}$,
$a=(1-\vartheta/\xi)v_k$,
$b=-\vartheta g_k/(\xi\mu_{\mathrm R})$,
$v_A=\widetilde v_{k+1}$,
$v_B=\bar v_{k+1}$,
$r=\vartheta$, and $x=x^\star$ yields
\begin{equation}
\label{eq:strong-second-distortion-bound}
\mathcal E_{x_{k+1}}(\bar v_{k+1})
\leq
\mathcal E_{z_k}(\widetilde v_{k+1})
+
\frac{\xi-\delta_D}{2}
\left(
\frac{1}{1-\vartheta}-1
\right)
\left(1-\frac{\vartheta}{\xi}\right)^2
\|v_k\|_{z_k}^2.
\end{equation}

Combining
\eqref{eq:strong-endpoint-space-estimate},
\eqref{eq:strong-first-distortion-bound}, and
\eqref{eq:strong-second-distortion-bound} gives
\[
\begin{aligned}
f(x_{k+1})-f(x^\star)
+\frac{\mu_{\mathrm R}}{2}\mathcal E_{x_{k+1}}(\bar v_{k+1})
\leq{}&
\left(1-\frac{\vartheta}{\xi}\right)
\left[
f(x_k)-f(x^\star)
+\frac{\mu_{\mathrm R}}{2}\mathcal E_{x_k}(\bar v_k)
\right]
\\
&+
\frac{\mu_{\mathrm R}}{2}
\Bigg[
\frac{\xi-\delta_D}{2}
\left(
\frac{1}{1-\vartheta}-1
\right)
\left(1-\frac{\vartheta}{\xi}\right)^2
-
\vartheta
\left(1-\frac{\vartheta}{\xi}\right)
\\
&\hspace{20mm}
-
(\xi-\zeta_D)
\left(1-\frac{\vartheta}{\xi}\right)
\left[
(1+\vartheta)^2-1
\right]
\Bigg]
\|v_k\|_{z_k}^2.
\end{aligned}
\]

Thus, it remains to verify the metric-distortion condition
\begin{equation}
\label{eq:strong-distortion-condition}
\frac{\xi-\delta_D}{2}
\left(
\frac{1}{1-\vartheta}-1
\right)
\left(1-\frac{\vartheta}{\xi}\right)^2
-
\vartheta
\left(1-\frac{\vartheta}{\xi}\right)
\leq
(\xi-\zeta_D)
\left(1-\frac{\vartheta}{\xi}\right)
\left[
(1+\vartheta)^2-1
\right].
\end{equation}
Since $0<\vartheta<1$ and $\xi\geq\zeta_D$, cancelling the
positive factor $1-\vartheta/\xi$ and rearranging terms shows that
\eqref{eq:strong-distortion-condition} is equivalent to
\[
2\xi(\xi-\zeta_D)\vartheta^2
+
\left[
\delta_D+\xi+2\xi(\xi-\zeta_D)
\right]\vartheta
-
\xi
\left(
\delta_D+2+3\xi-4\zeta_D
\right)
\leq 0.
\]
The positive root of the corresponding quadratic equation is
$t_{\star}(\xi)$ defined in \eqref{eq:strong-distortion-threshold}.
Since
\[
\vartheta
=
\sqrt{\xi\mu_{\mathrm R}h}
\leq
t_{\star}(\xi),
\]
the distortion condition holds.

Consequently,
\[
f(x_{k+1})-f(x^\star)
+
\frac{\mu_{\mathrm R}}{2}\mathcal E_{x_{k+1}}(\bar v_{k+1})
\leq
\left(
1-\frac{\vartheta}{\xi}
\right)
\left[
f(x_k)-f(x^\star)
+
\frac{\mu_{\mathrm R}}{2}\mathcal E_{x_k}(\bar v_k)
\right].
\]
By the definition of $\Phi_k^{\mathrm{SC}}$,
\[
\Phi_{k+1}^{\mathrm{SC}}
\leq
\Phi_k^{\mathrm{SC}}.
\]

Iterating the preceding inequality and using $\bar v_0=0$ gives
\[
f(x_k)-f(x^\star)
\leq
\left[
f(x_0)-f(x^\star)
+
\frac{\mu_{\mathrm R}}{2}d^2(x_0,x^\star)
\right]
\left(
1-\frac{\vartheta}{\xi}
\right)^k.
\]
Finally,
\[
\frac{\vartheta}{\xi}
=
\sqrt{\frac{\mu_{\mathrm R}h}{\xi}},
\]
which proves \eqref{eq:accelerated-strong-rate}.
\end{proof}

\begin{remark}
\label{rem:accelerated-strong-optimal-step}
For the accelerated c-wDRG schemes induced by the methods in
Theorem~\ref{thm:structure-preserving-certificates}, consider first
the case $\xi>1$. Since $0<t_\star(\xi)<1$, the strict condition
$h<1/(\xi\mu_{\mathrm R})$ is automatically satisfied whenever
\[
0<h\leq
\min\left\{
\frac{1}{L_{\mathrm R}},
\frac{t_\star(\xi)^2}{\xi\mu_{\mathrm R}}
\right\}.
\]

Moreover, for all methods in
Theorem~\ref{thm:structure-preserving-certificates},
every stepsize satisfying the bound above automatically satisfies$1-2\gamma h>0.$

Hence the largest admissible stepsize is
\[
h_\star(\xi)
=
\min\left\{
\frac{1}{L_{\mathrm R}},
\frac{t_\star(\xi)^2}{\xi\mu_{\mathrm R}}
\right\},
\qquad
s_\star(\xi)
=
\frac{h_\star(\xi)}
{1-2\gamma h_\star(\xi)},
\]
and the corresponding certified contraction factor is
\[
q_\star(\xi)
=
1-\sqrt{
\frac{\mu_{\mathrm R}h_\star(\xi)}{\xi}
}.
\]

Since all methods in
Theorem~\ref{thm:structure-preserving-certificates}
satisfy $\beta\geq0$, we have
$
2\gamma
\leq
\mu_{\mathrm R}
=
2(\beta+\gamma).
$
Hence the condition
$
h<\frac{1}{\mu_{\mathrm R}}
$
 implies
$
1-2\gamma h>0.
$

We next consider the boundary case $\xi=1$.
Since $\xi\geq\zeta_D\geq1$, one necessarily has
\[
\zeta_D=1,
\qquad
t_\star(1)=1.
\]
The admissible stepsizes therefore reduce to
\[
0<h\leq\frac{1}{L_{\mathrm R}},
\qquad
h<\frac{1}{\mu_{\mathrm R}}.
\]
For the methods in
Theorem~\ref{thm:structure-preserving-certificates},
one has $L_{\mathrm R}\geq\mu_{\mathrm R}$.
We distinguish the following two cases.

\medskip
\noindent
\textit{(i) $L_{\mathrm R}>\mu_{\mathrm R}$.}
Then
\[
\frac{1}{L_{\mathrm R}}
<
\frac{1}{\mu_{\mathrm R}},
\]
and hence the largest admissible stepsize is attained:
\[
h_\star(1)
=
\frac{1}{L_{\mathrm R}},
\qquad
s_\star(1)
=
\frac{1}{L_{\mathrm R}-2\gamma}.
\]
The corresponding optimal certified contraction factor is
\[
q_\star(1)
=
1-
\sqrt{
\frac{\mu_{\mathrm R}}{L_{\mathrm R}}
}.
\]

\medskip
\noindent
\textit{(ii) $L_{\mathrm R}=\mu_{\mathrm R}$.}
In this case,
\[
0<h<\frac{1}{\mu_{\mathrm R}},
\]
so the endpoint is not attained. Instead,
\begin{equation}
\label{eq:strong-boundary-optimal-rate}
\sup\left\{
h>0:\ h\text{ is admissible}
\right\}
=
\frac{1}{\mu_{\mathrm R}},
\qquad
\inf_{0<h<1/\mu_{\mathrm R}}
\left(
1-\sqrt{\mu_{\mathrm R}h}
\right)
=
0.
\end{equation}
Moreover,
\[
s(h)
=
\frac{h}{1-2\gamma h},
\]
and therefore
\[
\sup_{0<h<1/\mu_{\mathrm R}} s(h)
=
\begin{cases}
\displaystyle
\frac{1}{\mu_{\mathrm R}-2\gamma}
=
\frac{1}{2\beta},
& \beta>0,
\\[3mm]
+\infty,
& \beta=0.
\end{cases}
\]
Thus, in this boundary case, the optimal contraction factor is
understood in the infimal sense.

Combining the preceding cases, the optimal certified contraction
factor can be written uniformly as
\begin{equation}
\label{eq:strong-optimal-rate}
q_\star(\xi)
=
1-
\min\left\{
\sqrt{
\frac{\mu_{\mathrm R}}
{\xi L_{\mathrm R}}
},
\frac{t_\star(\xi)}{\xi}
\right\},
\end{equation}
where, in the boundary case $L_{\mathrm R}=\mu_{\mathrm R}$,
$q_\star(\xi)$ is understood in the infimal sense, as
\eqref{eq:strong-boundary-optimal-rate} shows that
$h=1/\mu_{\mathrm R}$ is the supremum, but not an admissible
stepsize.

In particular, in the Euclidean case
\[
\delta_D=\zeta_D=1,
\]
one may take $\xi=1$, so that $t_\star(1)=1$ and
\[
L_{\mathrm R}
=
2(\alpha+\gamma),
\qquad
\mu_{\mathrm R}
=
2(\beta+\gamma).
\]
Hence \eqref{eq:strong-optimal-rate} reduces to
\[
q_\star^{\mathrm E}
=
1-
\sqrt{
\frac{\mu_{\mathrm R}}{L_{\mathrm R}}
}
=
1-
\sqrt{
\frac{\beta+\gamma}{\alpha+\gamma}
},
\]
with the equality case $L_{\mathrm R}=\mu_{\mathrm R}$ understood
in the infimal sense. This coincides with the accelerated strongly
convex convergence factor in the Euclidean weak discrete-gradient
framework of~\cite{Ushiyama2023}.

For example, for the Euclidean proximal-point method,
\[
L_{\mathrm R}=\mu_{\mathrm R}=\mu,
\qquad
\beta=0,
\qquad
\gamma=\frac{\mu}{2},
\]
and therefore
\[
\sup h=\frac{1}{\mu},
\qquad
\sup s=+\infty,
\qquad
\inf q=0.
\]
\end{remark}

The explicit method-specific expressions are summarized in
Table~\ref{tab:optimal-accelerated-strongly-convex-rates}
of Appendix~\ref{app:optimal-strongly-convex-rates}.

\section{Conclusion}
\label{sec:conclusion}
In this work, we introduced the centered weak discrete Riemannian
gradient (c-wDRG) framework for the unified analysis of optimization
methods on Riemannian manifolds. By representing different schemes
through method-specific c-wDRG certificates, the framework covers
Riemannian steepest descent, proximal point, proximal gradient,
implicit midpoint, AVF, Gonzalez, and Itoh--Abe methods.

We established curvature-aware convergence results for both
nonaccelerated and accelerated c-wDRG schemes in geodesically convex
and strongly convex settings. The analysis yields explicit
curvature-dependent stepsize conditions and convergence rates, and
clarifies how the choice of center and manifold geometry affect the
resulting algorithms. Future work includes extensions to general
retractions and broader nonsmooth and composite optimization problems.

\begin{appendices}

\section{Proof}\label{secA1}

\subsection{Proof of Lemma~\ref{lem:exp-distortion-bound}}
\label{app:exp-distortion-bound}

\begin{proof}
Fix $c\in\mathcal{X}$ and $u\in T_c\mathcal{M}$ such that
$q=\Exp_c(u)\in\mathcal{X}$, and consider the geodesic
\[
\gamma(t):=\Exp_c(tu),
\qquad
t\in[0,1].
\]
Since $\gamma$ is a geodesic,
\[
\|\dot{\gamma}(t)\|_{\gamma(t)}
=
\|u\|_c.
\]
For each $t\in[0,1]$, the curvature operator
\[
v\mapsto R(v,\dot{\gamma}(t))\dot{\gamma}(t)
\]
is self-adjoint and vanishes in the direction of $\dot{\gamma}(t)$.
Recall that, for two linearly independent tangent vectors $v,w$,
the sectional curvature of the plane spanned by $v$ and $w$ is
defined by
\[
\operatorname{Sec}(v,w)
:=
\frac{
\left\langle R(v,w)w,v\right\rangle
}{
\|v\|^2\|w\|^2-\langle v,w\rangle^2
}.
\]
Therefore, taking $w=\dot{\gamma}(t)$, we have
\[
\left\langle
R(v,\dot{\gamma}(t))\dot{\gamma}(t),v
\right\rangle_{\gamma(t)}
=
\operatorname{Sec}(v,\dot{\gamma}(t))
\left(
\|v\|_{\gamma(t)}^2
\|\dot{\gamma}(t)\|_{\gamma(t)}^2
-
\left\langle
v,\dot{\gamma}(t)
\right\rangle_{\gamma(t)}^2
\right).
\]
In particular, if
$\left\langle v,\dot{\gamma}(t)\right\rangle_{\gamma(t)}=0,$
then,
\begin{equation}
\label{eq:sectional-curvature-orthogonal}
\left\langle
R(v,\dot{\gamma}(t))\dot{\gamma}(t),v
\right\rangle_{\gamma(t)}
=
\operatorname{Sec}(v,\dot{\gamma}(t))
\|v\|_{\gamma(t)}^2
\|\dot{\gamma}(t)\|_{\gamma(t)}^2.
\end{equation}
Since
\[
K_{\min}\leq\operatorname{Sec}\leq K_{\max},
\qquad
K:=\max\{|K_{\min}|,|K_{\max}|\},
\]
we have $|\operatorname{Sec}|\leq K$. Define the curvature operator
\[
\mathcal{R}_{\dot{\gamma}(t)}
:
T_{\gamma(t)}\mathcal{M}
\to
T_{\gamma(t)}\mathcal{M},
\qquad
\mathcal{R}_{\dot{\gamma}(t)}(v)
:=
R(v,\dot{\gamma}(t))\dot{\gamma}(t).
\]
Since $\mathcal{R}_{\dot{\gamma}(t)}$ is self-adjoint and vanishes
in the direction of $\dot{\gamma}(t)$, it follows from
\eqref{eq:sectional-curvature-orthogonal} and
$|\operatorname{Sec}|\leq K$ that
\[
\left\|
\mathcal{R}_{\dot{\gamma}(t)}
\right\|_{\mathrm{op}}
=
\sup_{\|v\|_{\gamma(t)}=1}
\left|
\left\langle
\mathcal{R}_{\dot{\gamma}(t)}(v),v
\right\rangle_{\gamma(t)}
\right|
\leq
K\|\dot{\gamma}(t)\|_{\gamma(t)}^2.
\]
Since
$\|\dot{\gamma}(t)\|_{\gamma(t)}
=
\|u\|_c,$
we obtain
\begin{equation}
\label{eq:app-curvature-operator-bound}
\left\|
\mathcal{R}_{\dot{\gamma}(t)}
\right\|_{\mathrm{op}}
\leq
K\|u\|_c^2.
\end{equation}

Recall that a vector field $J$ along a geodesic
$\gamma:[0,1]\to\mathcal{M}$ is a smooth map satisfying
\[
J(t)\in T_{\gamma(t)}\mathcal{M},
\qquad
t\in[0,1].
\]
Such a vector field $J$ is called a \emph{Jacobi field} along $\gamma$
if it satisfies the Jacobi equation
\begin{equation}
\label{eq:app-jacobi-equation}
\nabla_{\dot{\gamma}(t)}
\left(
\nabla_{\dot{\gamma}(t)}J(t)
\right)
+
R\bigl(J(t),\dot{\gamma}(t)\bigr)\dot{\gamma}(t)
=
0,
\qquad t\in[0,1].
\end{equation}

Now fix $v\in T_c\mathcal{M}$ and consider the smooth variation
\[
F(s,t)
:=
\Exp_c\bigl(t(u+sv)\bigr).
\]
For each fixed $s$, the curve
\[
t\longmapsto F(s,t)
\]
is a geodesic. Hence 
\[
\gamma(t)=F(0,t)=\Exp_c(tu).
\]

Moreover, we define,
\begin{equation}
\label{eq:app-jacobi-dexp}
J_v(t)
=
\mathrm{D}\Exp_c(tu)[tv].
\end{equation}
In particular,
\[
J_v(0)=0,
\qquad
J_v(1)=\mathrm{D}\Exp_c(u)[v].
\]
Moreover, differentiating at $t=0$ gives
\[
\nabla_{\dot{\gamma}(0)}J_v(0)=v.
\]
Since $F(s,t)=\Exp_c(t(u+sv))$ is a variation through geodesics,
its variational vector field $J_v$ is a Jacobi field
\cite[Proposition~10.1]{Lee2018}.
\begin{equation}
\label{eq:app-jacobi-field-equation}
\nabla_{\dot{\gamma}(t)}
\left(
\nabla_{\dot{\gamma}(t)}J_v(t)
\right)
+
R\bigl(J_v(t),\dot{\gamma}(t)\bigr)\dot{\gamma}(t)
=
0,
\qquad t\in[0,1].
\end{equation}
Parallel transport $J_v(t)$ back to $T_c\mathcal{M}$ and define
\[
j_v(t):=P_{\gamma(t)\to c}J_v(t).
\]
Then
\[
j_v(0)=0,
\qquad
\dot{j}_v(0)
=
P_{c\to c}
\nabla_{\dot{\gamma}(0)}J_v(0)
=
v,
\qquad
j_v(1)=A_c(u)v.
\]
Moreover,
\begin{equation}
\label{eq:app-parallel-jacobi-equation}
\ddot{j}_v(t)
+
\widetilde{\mathcal{R}}(t)j_v(t)
=
0.
\end{equation}
where
\[
\widetilde{\mathcal{R}}(t)j_v(t)
:=
P_{\gamma(t)\to c}
\left[
R\left(
P_{c\to\gamma(t)}j_v(t),
\dot{\gamma}(t)
\right)
\dot{\gamma}(t)
\right].
\]
Since parallel transport is an isometry,
\eqref{eq:app-curvature-operator-bound} implies
\[
\|\widetilde{\mathcal{R}}(t)\|_{\mathrm{op}}
\leq
K\|u\|_c^2.
\]
Integrating \eqref{eq:app-parallel-jacobi-equation} twice and using
$j_v(0)=0$ and $\dot{j}_v(0)=v$ yields
\begin{equation}
\label{eq:app-jacobi-integral}
j_v(t)
=
tv
-
\int_0^t
(t-s)\widetilde{\mathcal{R}}(s)j_v(s)\,ds.
\end{equation}
Hence
\[
\|j_v(t)\|_c
\leq
t\|v\|_c
+
K\|u\|_c^2
\int_0^t
(t-s)\|j_v(s)\|_c\,ds.
\]
By the standard comparison principle for Volterra integral
inequalities,
\[
\|j_v(t)\|_c
\leq
\frac{
\sinh\!\left(\sqrt{K}\|u\|_c t\right)
}{
\sqrt{K}\|u\|_c
}
\|v\|_c.
\]
Evaluating at $t=1$ gives
\[
\|A_c(u)v\|_c
\leq
\frac{
\sinh\!\left(\sqrt{K}\|u\|_c\right)
}{
\sqrt{K}\|u\|_c
}
\|v\|_c.
\]
Therefore,
\[
\|A_c(u)\|_{\mathrm{op}}
\leq
\frac{
\sinh\!\left(\sqrt{K}\|u\|_c\right)
}{
\sqrt{K}\|u\|_c
}.
\]
Since $\|u\|_c\leq D$ and
\[
t\mapsto\frac{\sinh(\sqrt{K}t)}{\sqrt{K}t}
\]
is nondecreasing for $t\geq0$, we obtain
\begin{equation}
\label{eq:app-JD-bound}
J_D
\leq
\frac{\sinh(\sqrt{K}D)}{\sqrt{K}D}.
\end{equation}
Next, setting $t=1$ in \eqref{eq:app-jacobi-integral} gives
\[
(A_c(u)-I)v
=
-\int_0^1
(1-s)\widetilde{\mathcal{R}}(s)j_v(s)\,ds.
\]
Thus
\[
\begin{aligned}
\|(A_c(u)-I)v\|_c
&\leq
K\|u\|_c^2
\int_0^1
(1-s)\|j_v(s)\|_c\,ds
\\
&\leq
K\|u\|_c^2
\int_0^1
(1-s)
\frac{
\sinh\!\left(\sqrt{K}\|u\|_c s\right)
}{
\sqrt{K}\|u\|_c
}
\,ds\,
\|v\|_c
\\
&=
\left[
\frac{
\sinh\!\left(\sqrt{K}\|u\|_c\right)
}{
\sqrt{K}\|u\|_c
}
-1
\right]\|v\|_c.
\end{aligned}
\]
Hence
\[
\|A_c(u)-I\|_{\mathrm{op}}
\leq
\frac{
\sinh\!\left(\sqrt{K}\|u\|_c\right)
}{
\sqrt{K}\|u\|_c
}
-1.
\]
Therefore,
\[
\frac{\|A_c(u)-I\|_{\mathrm{op}}}{\|u\|_c}
\leq
\frac{1}{\|u\|_c}
\left[
\frac{
\sinh\!\left(\sqrt{K}\|u\|_c\right)
}{
\sqrt{K}\|u\|_c
}
-1
\right].
\]
The right-hand side is nondecreasing in $\|u\|_c>0$, since
\[
\frac{1}{t}
\left(
\frac{\sinh(\sqrt{K}t)}{\sqrt{K}t}-1
\right)
=
\sum_{m=1}^{\infty}
\frac{K^m t^{2m-1}}{(2m+1)!},
\qquad t>0.
\]
Thus, using $\|u\|_c\leq D$,
\[
H_D
\leq
\frac{1}{D}
\left[
\frac{\sinh(\sqrt{K}D)}{\sqrt{K}D}-1
\right],
\]
and consequently
\begin{equation}
\label{eq:app-HD-bound}
DH_D
\leq
\frac{\sinh(\sqrt{K}D)}{\sqrt{K}D}-1.
\end{equation}
Combining \eqref{eq:app-JD-bound} and
\eqref{eq:app-HD-bound}, we obtain
\[
\begin{aligned}
\Lambda_D
=
J_D+DH_D
&\leq
\frac{\sinh(\sqrt{K}D)}{\sqrt{K}D}
+
\frac{\sinh(\sqrt{K}D)}{\sqrt{K}D}
-1
\\
&=
2\frac{\sinh(\sqrt{K}D)}{\sqrt{K}D}-1,
\end{aligned}
\]
which proves \eqref{eq:lambda-D-bound}.
For $K=0$, the curvature tensor vanishes along the relevant radial
geodesics, so the Jacobi equation reduces to
\[
\ddot{j}_v(t)=0.
\]
Since
\[
j_v(0)=0,
\qquad
\dot{j}_v(0)=v,
\]
we have
\[
j_v(t)=tv.
\]
Therefore,
\[
A_c(u)v=j_v(1)=v,
\]
and hence
\[
A_c(u)=I,
\qquad
J_D=1,
\qquad
H_D=0,
\qquad
\Lambda_D=1.
\]
\end{proof}

\subsection{Proof of Theorem~\ref{thm:structure-preserving-certificates}}
\label{app:proof-structure-preserving-certificates}

\begin{proof}
We verify the c-wDRG inequality \eqref{eq:c-wdrg} for each method.


\medskip
\noindent\textbf{(i) Riemannian steepest descent.}

By the Riemannian descent lemma,
\[
f(y)
\leq
f(z)
+
\left\langle
\operatorname{grad}f(z),\Log_z y
\right\rangle_z
+
\frac{L}{2}d^2(y,z),
\]
whereas $\mu$-strong geodesic convexity gives
\[
f(x)
\geq
f(z)
+
\left\langle
\operatorname{grad}f(z),\Log_z x
\right\rangle_z
+
\frac{\mu}{2}d^2(z,x).
\]
Subtracting the second inequality from the first yields
\[
\begin{aligned}
f(y)-f(x)
\leq{}&
\left\langle
\operatorname{grad}f(z),
\Log_z y-\Log_z x
\right\rangle_z
+
\frac{L}{2}d^2(y,z)
-
\frac{\mu}{2}d^2(z,x).
\end{aligned}
\]
Since
\[
\Delta_z(y,x)=\Log_z y-\Log_z x,
\qquad
\|\Delta_z(y,z)\|_z^2=d^2(y,z),
\qquad
\|\Delta_z(z,x)\|_z^2=d^2(z,x),
\]
we obtain
\[
f(y)-f(x)
\leq
\left\langle
\mathcal{G}^{\mathrm{SD}}_f(y,z;z),
\Delta_z(y,x)
\right\rangle_z
+
\frac{L}{2}\|\Delta_z(y,z)\|_z^2
-
\frac{\mu}{2}\|\Delta_z(z,x)\|_z^2.
\]
Hence
\[
(\alpha,\beta,\gamma)
=
\left(
\frac{L}{2},
\frac{\mu}{2},
0
\right).
\]


\medskip
\noindent\textbf{(ii) Riemannian proximal point.}

Strong geodesic convexity applied from $y$ to $x$ gives
\[
f(x)
\geq
f(y)
+
\left\langle
\operatorname{grad}f(y),\Log_y x
\right\rangle_y
+
\frac{\mu}{2}d^2(y,x).
\]
Since
\[
\Delta_y(y,x)
=
-\Log_y x,
\qquad
\|\Delta_y(y,x)\|_y^2
=
d^2(y,x),
\]
it follows that
\[
f(y)-f(x)
\leq
\left\langle
\mathcal{G}^{\mathrm{PP}}_f(y,z;y),
\Delta_y(y,x)
\right\rangle_y
-
\frac{\mu}{2}\|\Delta_y(y,x)\|_y^2.
\]
Thus
\[
(\alpha,\beta,\gamma)
=
\left(
0,
0,
\frac{\mu}{2}
\right).
\]

\noindent\textbf{(iii) Riemannian proximal gradient.}

Let
\[
u:=\Log_z x,
\qquad
\eta_z:=\Log_z y.
\]
By geodesic $L$-smoothness and $\mu$-strong geodesic convexity,
\[
f(y)-f(x)
\leq
\left\langle
\operatorname{grad}f(z),
\eta_z-u
\right\rangle_z
+
\frac{L}{2}\|\eta_z\|_z^2
-
\frac{\mu}{2}\|u\|_z^2.
\]

By the stationarity condition, there exists
$g_z\in\partial\widetilde\psi_z(\eta_z)$ such that
\[
g_z
=
-\operatorname{grad}f(z)
-
\frac{1}{h}\eta_z.
\]
Since $\widetilde\psi_z$ is convex,
\[
\widetilde\psi_z(u)
\geq
\widetilde\psi_z(\eta_z)
+
\left\langle
g_z,u-\eta_z
\right\rangle_z.
\]
Recalling that
\[
\widetilde\psi_z(\eta)
=
\psi\!\left(\Exp_z(\eta)\right)
+
\frac{\rho_\psi}{2}\|\eta\|_z^2,
\]
we obtain
\[
\begin{aligned}
\psi(y)-\psi(x)
\leq{}&
-\left\langle
\operatorname{grad}f(z),
\eta_z-u
\right\rangle_z
\\
&-
\frac{1}{h}
\left\langle
\eta_z,\eta_z-u
\right\rangle_z
+
\frac{\rho_\psi}{2}
\left(
\|u\|_z^2-\|\eta_z\|_z^2
\right).
\end{aligned}
\]
Adding the preceding two inequalities gives
\[
\begin{aligned}
F(y)-F(x)
\leq{}&
-\frac{1}{h}
\left\langle
\eta_z,\eta_z-u
\right\rangle_z
\\
&+
\frac{L-\rho_\psi}{2}
\|\eta_z\|_z^2
-
\frac{\mu-\rho_\psi}{2}
\|u\|_z^2.
\end{aligned}
\]
Since
\[
\mathcal G_{F,h}^{\mathrm{PG}}(y,z;z)
=
-\frac{1}{h}\eta_z,
\]
and
\[
\Delta_z(y,x)=\eta_z-u,
\qquad
\Delta_z(y,z)=\eta_z,
\qquad
\|\Delta_z(z,x)\|_z^2=\|u\|_z^2,
\]
it follows that
\[
\begin{aligned}
F(y)-F(x)
\leq{}&
\left\langle
\mathcal G_{F,h}^{\mathrm{PG}}(y,z;z),
\Delta_z(y,x)
\right\rangle_z
\\
&+
\frac{L-\rho_\psi}{2}
\|\Delta_z(y,z)\|_z^2
-
\frac{\mu-\rho_\psi}{2}
\|\Delta_z(z,x)\|_z^2.
\end{aligned}
\]
Thus
\[
(\alpha,\beta,\gamma)
=
\left(
\frac{L-\rho_\psi}{2},
\frac{\mu-\rho_\psi}{2},
0
\right).
\]

\medskip
\noindent\textbf{(iv) Riemannian implicit midpoint rule.}

Let
\[
m:=\operatorname{mid}(y,z).
\]
Since $m$ is the midpoint of the selected shortest geodesic from
$z$ to $y$, we have
\[
\Log_m z=-\Log_m y.
\]
By the Riemannian descent lemma,
\[
f(y)
\leq
f(m)
+
\left\langle
\operatorname{grad}f(m),\Log_m y
\right\rangle_m
+
\frac{L}{2}\|\Log_m y\|_m^2,
\]
whereas strong geodesic convexity gives
\[
f(x)
\geq
f(m)
+
\left\langle
\operatorname{grad}f(m),\Log_m x
\right\rangle_m
+
\frac{\mu}{2}\|\Log_m x\|_m^2.
\]
Therefore,
\[
\begin{aligned}
f(y)-f(x)
\leq{}&
\left\langle
\operatorname{grad}f(m),
\Log_m y-\Log_m x
\right\rangle_m
\\
&+
\frac{L}{2}\|\Log_m y\|_m^2
-
\frac{\mu}{2}\|\Log_m x\|_m^2.
\end{aligned}
\]
Moreover,
\[
\Delta_m(y,z)=2\Log_m y,
\]
\[
\Delta_m(z,x)
=
-\Log_m y-\Log_m x,
\]
and
\[
\Delta_m(y,x)
=
\Log_m y-\Log_m x.
\]
Using the parallelogram identity,
\[
\left\|
\Log_m y+\Log_m x
\right\|_m^2
+
\left\|
\Log_m y-\Log_m x
\right\|_m^2
=
2\|\Log_m y\|_m^2
+
2\|\Log_m x\|_m^2,
\]
we obtain
\[
\begin{aligned}
&
\frac{L+\mu}{8}\|\Delta_m(y,z)\|_m^2
-
\frac{\mu}{4}\|\Delta_m(z,x)\|_m^2
-
\frac{\mu}{4}\|\Delta_m(y,x)\|_m^2
\\
&\qquad=
\frac{L+\mu}{2}\|\Log_m y\|_m^2
-
\frac{\mu}{4}
\left(
\|\Log_m y+\Log_m x\|_m^2
+
\|\Log_m y-\Log_m x\|_m^2
\right)
\\
&\qquad=
\frac{L}{2}\|\Log_m y\|_m^2
-
\frac{\mu}{2}\|\Log_m x\|_m^2.
\end{aligned}
\]
Since
\[
\Log_m y-\Log_m x
=
\Delta_m(y,x),
\]
it follows that
\[
\begin{aligned}
f(y)-f(x)
\leq{}&
\left\langle
\mathcal{G}^{\mathrm{MP}}_f(y,z;m),
\Delta_m(y,x)
\right\rangle_m
+
\frac{L+\mu}{8}\|\Delta_m(y,z)\|_m^2
\\
&-
\frac{\mu}{4}\|\Delta_m(z,x)\|_m^2
-
\frac{\mu}{4}\|\Delta_m(y,x)\|_m^2.
\end{aligned}
\]
Therefore,
\[
(\alpha,\beta,\gamma)
=
\left(
\frac{L+\mu}{8},
\frac{\mu}{4},
\frac{\mu}{4}
\right).
\]


\medskip
\noindent\textbf{(v) Geodesic AVF discrete gradient.}

Since $c_\theta$ lies on the selected shortest geodesic from $z$ to
$y$, we have
\[
\Log_{c_\theta} z
=
-\theta\Delta_{c_\theta}(y,z),
\qquad
\Log_{c_\theta} y
=
(1-\theta)\Delta_{c_\theta}(y,z).
\]
For $t\in[0,1]$, let
\[
u_t
:=
(t-\theta)\Delta_{c_\theta}(y,z),
\qquad
q_t
:=
\Exp_{c_\theta}(u_t)
=
\gamma_{z,y}(t).
\]

We first establish the exact centered chain rule. Since
\[
\dot q_t
=
\mathrm{D}\Exp_{c_\theta}(u_t)
\bigl[\dot u_t\bigr]
=
\mathrm{D}\Exp_{c_\theta}(u_t)
\bigl[\Delta_{c_\theta}(y,z)\bigr].
\]
the fundamental theorem of calculus gives
\[
\begin{aligned}
f(y)-f(z)
&=
\int_0^1
\left\langle
\operatorname{grad}f(q_t),
\mathrm{D}\Exp_{c_\theta}(u_t)
\bigl[\Delta_{c_\theta}(y,z)\bigr]
\right\rangle_{q_t}\,dt
\\
&=
\int_0^1
\left\langle
\left(\mathrm{D}\Exp_{c_\theta}(u_t)\right)^\ast
\operatorname{grad}f(q_t),
\Delta_{c_\theta}(y,z)
\right\rangle_{c_\theta}\,dt
\\
&=
\left\langle
\mathcal{G}^{\mathrm{AVF},\theta}_f(y,z;c_\theta),
\Delta_{c_\theta}(y,z)
\right\rangle_{c_\theta}.
\end{aligned}
\]
Thus $\mathcal{G}^{\mathrm{AVF},\theta}_f$ satisfies the exact
centered chain rule.

Next, the Riemannian descent lemma from $c_\theta$ to $y$ and strong
geodesic convexity from $c_\theta$ to $x$ give
\[
\begin{aligned}
f(y)-f(x)
\leq{}&
\left\langle
\operatorname{grad}f(c_\theta),
(1-\theta)\Delta_{c_\theta}(y,z)-\Log_{c_\theta}x
\right\rangle_{c_\theta}
\\
&+
\frac{L}{2}(1-\theta)^2
\|\Delta_{c_\theta}(y,z)\|_{c_\theta}^2
-
\frac{\mu}{2}\|\Log_{c_\theta}x\|_{c_\theta}^2.
\end{aligned}
\]
Adding and subtracting
$\mathcal{G}^{\mathrm{AVF},\theta}_f(y,z;c_\theta)$ yields
\[
\begin{aligned}
f(y)-f(x)
\leq{}&
\left\langle
\mathcal{G}^{\mathrm{AVF},\theta}_f(y,z;c_\theta),
(1-\theta)\Delta_{c_\theta}(y,z)-\Log_{c_\theta}x
\right\rangle_{c_\theta}
\\
&-
(1-\theta)
\left\langle
\mathcal{G}^{\mathrm{AVF},\theta}_f(y,z;c_\theta)
-
\operatorname{grad}f(c_\theta),
\Delta_{c_\theta}(y,z)
\right\rangle_{c_\theta}
\\
&+
\left\langle
\mathcal{G}^{\mathrm{AVF},\theta}_f(y,z;c_\theta)
-
\operatorname{grad}f(c_\theta),
\Log_{c_\theta}x
\right\rangle_{c_\theta}
\\
&+
\frac{L}{2}(1-\theta)^2
\|\Delta_{c_\theta}(y,z)\|_{c_\theta}^2
-
\frac{\mu}{2}\|\Log_{c_\theta}x\|_{c_\theta}^2.
\end{aligned}
\]

On the other hand, using the exact chain rule,
\[
f(y)-f(x)
=
\left\langle
\mathcal{G}^{\mathrm{AVF},\theta}_f(y,z;c_\theta),
\Delta_{c_\theta}(y,z)
\right\rangle_{c_\theta}
+
f(z)-f(x).
\]
The Riemannian descent lemma from $c_\theta$ to $z$ and strong
geodesic convexity from $c_\theta$ to $x$ give
\[
\begin{aligned}
f(z)-f(x)
\leq{}&
\left\langle
\operatorname{grad}f(c_\theta),
-\theta\Delta_{c_\theta}(y,z)-\Log_{c_\theta}x
\right\rangle_{c_\theta}
\\
&+
\frac{L}{2}\theta^2
\|\Delta_{c_\theta}(y,z)\|_{c_\theta}^2
-
\frac{\mu}{2}\|\Log_{c_\theta}x\|_{c_\theta}^2.
\end{aligned}
\]
Therefore,
\[
\begin{aligned}
f(y)-f(x)
\leq{}&
\left\langle
\mathcal{G}^{\mathrm{AVF},\theta}_f(y,z;c_\theta),
(1-\theta)\Delta_{c_\theta}(y,z)-\Log_{c_\theta}x
\right\rangle_{c_\theta}
\\
&+
\theta
\left\langle
\mathcal{G}^{\mathrm{AVF},\theta}_f(y,z;c_\theta)
-
\operatorname{grad}f(c_\theta),
\Delta_{c_\theta}(y,z)
\right\rangle_{c_\theta}
\\
&+
\left\langle
\mathcal{G}^{\mathrm{AVF},\theta}_f(y,z;c_\theta)
-
\operatorname{grad}f(c_\theta),
\Log_{c_\theta}x
\right\rangle_{c_\theta}
\\
&+
\frac{L}{2}\theta^2
\|\Delta_{c_\theta}(y,z)\|_{c_\theta}^2
-
\frac{\mu}{2}\|\Log_{c_\theta}x\|_{c_\theta}^2.
\end{aligned}
\]

Multiplying the first estimate by $\theta$ and the second one by
$1-\theta$, and then adding them, cancels the terms involving
$\Delta_{c_\theta}(y,z)$ in the gradient error and yields
\begin{equation}
\label{eq:avf-pre-error-bound}
\begin{aligned}
f(y)-f(x)
\leq{}&
\left\langle
\mathcal{G}^{\mathrm{AVF},\theta}_f(y,z;c_\theta),
\Delta_{c_\theta}(y,x)
\right\rangle_{c_\theta}
\\
&+
\left\langle
\mathcal{G}^{\mathrm{AVF},\theta}_f(y,z;c_\theta)
-
\operatorname{grad}f(c_\theta),
\Log_{c_\theta}x
\right\rangle_{c_\theta}
\\
&+
\frac{L}{2}\theta(1-\theta)
\|\Delta_{c_\theta}(y,z)\|_{c_\theta}^2
-
\frac{\mu}{2}\|\Log_{c_\theta}x\|_{c_\theta}^2.
\end{aligned}
\end{equation}

We now estimate
\[
\left\|
\mathcal{G}^{\mathrm{AVF},\theta}_f(y,z;c_\theta)
-
\operatorname{grad}f(c_\theta)
\right\|_{c_\theta}.
\]
By \eqref{eq:exp-distortion-operator},
\[
\left(\mathrm{D}\Exp_{c_\theta}(u_t)\right)^\ast
=
A_{c_\theta}(u_t)^\ast P_{q_t\to c_\theta}.
\]
Hence
\[
\begin{aligned}
&
\mathcal{G}^{\mathrm{AVF},\theta}_f(y,z;c_\theta)
-
\operatorname{grad}f(c_\theta)
\\
&\qquad=
\int_0^1
\left[
A_{c_\theta}(u_t)^\ast
P_{q_t\to c_\theta}\operatorname{grad}f(q_t)
-
\operatorname{grad}f(c_\theta)
\right]dt.
\end{aligned}
\]
Adding and subtracting
$A_{c_\theta}(u_t)^\ast\operatorname{grad}f(c_\theta)$ gives
\[
\begin{aligned}
&
\left\|
\mathcal{G}^{\mathrm{AVF},\theta}_f(y,z;c_\theta)
-
\operatorname{grad}f(c_\theta)
\right\|_{c_\theta}
\\
&\leq
\int_0^1
\|A_{c_\theta}(u_t)\|_{\mathrm{op}}
\left\|
P_{q_t\to c_\theta}\operatorname{grad}f(q_t)
-
\operatorname{grad}f(c_\theta)
\right\|_{c_\theta}\,dt
\\
&\quad+
\int_0^1
\|A_{c_\theta}(u_t)-I\|_{\mathrm{op}}
\|\operatorname{grad}f(c_\theta)\|_{c_\theta}\,dt.
\end{aligned}
\]
Since $x^\star$ is an interior minimizer,
\[
\operatorname{grad}f(x^\star)=0.
\]
By $L$-smoothness and $\operatorname{diam}(\mathcal X)\leq D$,
\[
\begin{aligned}
\|\operatorname{grad}f(c_\theta)\|_{c_\theta}
&=
\left\|
\operatorname{grad}f(c_\theta)
-
P_{x^\star\to c_\theta}\operatorname{grad}f(x^\star)
\right\|_{c_\theta}
\\
&\leq
L\,d(c_\theta,x^\star)
\leq
LD.
\end{aligned}
\]
Moreover,
\[
d(q_t,c_\theta)
=
\|u_t\|_{c_\theta}
=
|t-\theta|
\|\Delta_{c_\theta}(y,z)\|_{c_\theta}.
\]
Using \eqref{eq:exp-distortion-constants}, we obtain
\[
\begin{aligned}
\left\|
\mathcal{G}^{\mathrm{AVF},\theta}_f(y,z;c_\theta)
-
\operatorname{grad}f(c_\theta)
\right\|_{c_\theta}
&\leq
L(J_D+DH_D)
\int_0^1 |t-\theta|\,dt\,
\|\Delta_{c_\theta}(y,z)\|_{c_\theta}
\\
&=
L\Lambda_D\omega_\theta
\|\Delta_{c_\theta}(y,z)\|_{c_\theta}
\\
&\leq
L\bar{\Lambda}_D\omega_\theta
\|\Delta_{c_\theta}(y,z)\|_{c_\theta}.
\end{aligned}
\]
where the last inequality follows from
Lemma~\ref{lem:exp-distortion-bound}, which gives
$\Lambda_D\leq\bar{\Lambda}_D$.
Therefore, for every $0<\rho<\mu$, Young's inequality gives
\[
\begin{aligned}
&
\left\langle
\mathcal{G}^{\mathrm{AVF},\theta}_f(y,z;c_\theta)
-
\operatorname{grad}f(c_\theta),
\Log_{c_\theta}x
\right\rangle_{c_\theta}
\\
&\leq
\frac{L^2\bar{\Lambda}_D^2\omega_\theta^2}{2\rho}
\|\Delta_{c_\theta}(y,z)\|_{c_\theta}^2
+
\frac{\rho}{2}\|\Log_{c_\theta}x\|_{c_\theta}^2.
\end{aligned}
\]
Substituting this estimate into
\eqref{eq:avf-pre-error-bound} yields
\[
\begin{aligned}
f(y)-f(x)
\leq{}&
\left\langle
\mathcal{G}^{\mathrm{AVF},\theta}_f(y,z;c_\theta),
\Delta_{c_\theta}(y,x)
\right\rangle_{c_\theta}
+
\left(
\frac{L}{2}\theta(1-\theta)
+
\frac{L^2\bar{\Lambda}_D^2\omega_\theta^2}{2\rho}
\right)
\|\Delta_{c_\theta}(y,z)\|_{c_\theta}^2
\\
&-
\frac{\mu-\rho}{2}
\|\Log_{c_\theta}x\|_{c_\theta}^2.
\end{aligned}
\]
We set
\[
\beta
:=
\frac{\mu-\rho}{2}(1-\theta),
\qquad
\gamma
:=
\frac{\mu-\rho}{2}\theta.
\]
Since
\[
\Delta_{c_\theta}(z,x)
=
-\theta\Delta_{c_\theta}(y,z)-\Log_{c_\theta}x,
\]
and
\[
\Delta_{c_\theta}(y,x)
=
(1-\theta)\Delta_{c_\theta}(y,z)-\Log_{c_\theta}x,
\]
we have
\[
\begin{aligned}
&
\beta\|\Delta_{c_\theta}(z,x)\|_{c_\theta}^2
+
\gamma\|\Delta_{c_\theta}(y,x)\|_{c_\theta}^2
\\
&\qquad=
\frac{\mu-\rho}{2}
\|\Log_{c_\theta}x\|_{c_\theta}^2
+
\frac{\mu-\rho}{2}\theta(1-\theta)
\|\Delta_{c_\theta}(y,z)\|_{c_\theta}^2.
\end{aligned}
\]
Equivalently,
\[
\begin{aligned}
-\frac{\mu-\rho}{2}
\|\Log_{c_\theta}x\|_{c_\theta}^2
={}&
-\beta\|\Delta_{c_\theta}(z,x)\|_{c_\theta}^2
-\gamma\|\Delta_{c_\theta}(y,x)\|_{c_\theta}^2
\\
&+
\frac{\mu-\rho}{2}\theta(1-\theta)
\|\Delta_{c_\theta}(y,z)\|_{c_\theta}^2.
\end{aligned}
\]
Therefore,
\[
\begin{aligned}
f(y)-f(x)
\leq{}&
\left\langle
\mathcal{G}^{\mathrm{AVF},\theta}_f(y,z;c_\theta),
\Delta_{c_\theta}(y,x)
\right\rangle_{c_\theta}
+
\alpha\|\Delta_{c_\theta}(y,z)\|_{c_\theta}^2
\\
&-
\beta\|\Delta_{c_\theta}(z,x)\|_{c_\theta}^2
-
\gamma\|\Delta_{c_\theta}(y,x)\|_{c_\theta}^2,
\end{aligned}
\]
where
\[
\alpha
=
\frac{L+\mu-\rho}{2}\theta(1-\theta)
+
\frac{L^2\bar{\Lambda}_D^2\omega_\theta^2}{2\rho}.
\]
This proves the claimed AVF certificate.

\medskip
\noindent\textbf{(vi) Riemannian Gonzalez discrete gradient.}

Let
\[
m:=\operatorname{mid}(y,z).
\]
By definition,
\[
\mathcal{G}^{\mathrm{G}}_f(y,z;m)
=
\operatorname{grad}f(m)
+
\frac{
f(y)-f(z)
-
\left\langle
\operatorname{grad}f(m),
\Delta_m(y,z)
\right\rangle_m
}{
\|\Delta_m(y,z)\|_m^2
}
\Delta_m(y,z).
\]
Consequently,
\[
\left\langle
\mathcal{G}^{\mathrm{G}}_f(y,z;m),
\Delta_m(y,z)
\right\rangle_m
=
f(y)-f(z),
\]
so the Gonzalez discrete gradient satisfies the exact centered chain
rule.

Since $m$ is the midpoint of the selected shortest geodesic from
$z$ to $y$,
\[
\Log_m z=-\Log_m y,
\qquad
\Delta_m(y,z)=2\Log_m y.
\]
The Riemannian descent lemma from $m$ to $y$ gives
\[
f(y)
\leq
f(m)
+
\left\langle
\operatorname{grad}f(m),\Log_m y
\right\rangle_m
+
\frac{L}{2}\|\Log_m y\|_m^2,
\]
whereas strong geodesic convexity from $m$ to $z$ gives
\[
f(z)
\geq
f(m)
+
\left\langle
\operatorname{grad}f(m),\Log_m z
\right\rangle_m
+
\frac{\mu}{2}\|\Log_m z\|_m^2.
\]
Therefore,
\[
f(y)-f(z)
-
\left\langle
\operatorname{grad}f(m),
\Delta_m(y,z)
\right\rangle_m
\leq
\frac{L-\mu}{8}
\|\Delta_m(y,z)\|_m^2.
\]
Interchanging $y$ and $z$ gives the reverse estimate, and hence
\[
\left|
f(y)-f(z)
-
\left\langle
\operatorname{grad}f(m),
\Delta_m(y,z)
\right\rangle_m
\right|
\leq
\frac{L-\mu}{8}
\|\Delta_m(y,z)\|_m^2.
\]

From part~(iv),
\[
\begin{aligned}
f(y)-f(x)
-
\left\langle
\operatorname{grad}f(m),
\Delta_m(y,x)
\right\rangle_m
\leq{}&
\frac{L+\mu}{8}
\|\Delta_m(y,z)\|_m^2
-
\frac{\mu}{4}
\|\Delta_m(z,x)\|_m^2
\\
&-
\frac{\mu}{4}
\|\Delta_m(y,x)\|_m^2.
\end{aligned}
\]
Moreover,
\[
\begin{aligned}
&
\left\langle
\mathcal{G}^{\mathrm{G}}_f(y,z;m),
\Delta_m(y,x)
\right\rangle_m
\\
&\qquad=
\left\langle
\operatorname{grad}f(m),
\Delta_m(y,x)
\right\rangle_m
\\
&\qquad\quad+
\frac{
f(y)-f(z)
-
\left\langle
\operatorname{grad}f(m),
\Delta_m(y,z)
\right\rangle_m
}{
\|\Delta_m(y,z)\|_m^2
}
\left\langle
\Delta_m(y,z),
\Delta_m(y,x)
\right\rangle_m.
\end{aligned}
\]
Hence, by the preceding residual bound and the Cauchy--Schwarz
inequality,
\[
\begin{aligned}
&
f(y)-f(x)
-
\left\langle
\mathcal{G}^{\mathrm{G}}_f(y,z;m),
\Delta_m(y,x)
\right\rangle_m
\\
&\leq
\frac{L+\mu}{8}
\|\Delta_m(y,z)\|_m^2
-
\frac{\mu}{4}
\|\Delta_m(z,x)\|_m^2
-
\frac{\mu}{4}
\|\Delta_m(y,x)\|_m^2
\\
&\qquad+
\frac{L-\mu}{8}
\|\Delta_m(y,z)\|_m
\|\Delta_m(y,x)\|_m.
\end{aligned}
\]
By Young's inequality,
\[
\frac{L-\mu}{8}
\|\Delta_m(y,z)\|_m
\|\Delta_m(y,x)\|_m
\leq
\frac{(L-\mu)^2}{16\mu}
\|\Delta_m(y,z)\|_m^2
+
\frac{\mu}{16}
\|\Delta_m(y,x)\|_m^2.
\]
Therefore,
\[
\begin{aligned}
f(y)-f(x)
\leq{}&
\left\langle
\mathcal{G}^{\mathrm{G}}_f(y,z;m),
\Delta_m(y,x)
\right\rangle_m
+
\left(
\frac{L+\mu}{8}
+
\frac{(L-\mu)^2}{16\mu}
\right)
\|\Delta_m(y,z)\|_m^2
\\
&-
\frac{\mu}{4}
\|\Delta_m(z,x)\|_m^2
-
\frac{3\mu}{16}
\|\Delta_m(y,x)\|_m^2.
\end{aligned}
\]
Dropping the last nonpositive term gives
\[
\begin{aligned}
f(y)-f(x)
\leq{}&
\left\langle
\mathcal{G}^{\mathrm{G}}_f(y,z;m),
\Delta_m(y,x)
\right\rangle_m
+
\left(
\frac{L+\mu}{8}
+
\frac{(L-\mu)^2}{16\mu}
\right)
\|\Delta_m(y,z)\|_m^2
\\
&-
\frac{\mu}{4}
\|\Delta_m(z,x)\|_m^2.
\end{aligned}
\]
Hence
\[
(\alpha,\beta,\gamma)
=
\left(
\frac{L+\mu}{8}
+
\frac{(L-\mu)^2}{16\mu},
\frac{\mu}{4},
0
\right).
\]
\medskip
\noindent\textbf{(vii) Riemannian Itoh--Abe discrete gradient.}

Write
\[
\Delta_z(y,z)
=
\Log_z y
=
\sum_{j=1}^d s_jE_j.
\]
For each $j$, define
\[
u_j(t)
:=
u^{[j-1]}+t s_jE_j,
\qquad
\eta_j(t)
:=
\Exp_z(u_j(t)),
\qquad
0\leq t\leq1.
\]
By the fundamental theorem of calculus, 
the $a_j(y,z)$ satisfies
\[
a_j(y,z)
=
\int_0^1
\left\langle
\operatorname{grad}f(\eta_j(t)),
\mathrm{D}\Exp_z(u_j(t))[E_j]
\right\rangle_{\eta_j(t)}
\,dt.
\]
This formula also agrees with the continuous definition when $s_j=0$.
By the definition of $A_z(u)$,
\[
A_z(u)
=
P_{\Exp_z(u)\to z}\circ\mathrm{D}\Exp_z(u),
\]
and hence
\[
\begin{aligned}
&a_j(y,z)
-
\left\langle
\operatorname{grad}f(z),E_j
\right\rangle_z
\\
&\qquad=
\int_0^1
\Big\langle
A_z(u_j(t))^\ast
P_{\eta_j(t)\to z}
\operatorname{grad}f(\eta_j(t))
-
\operatorname{grad}f(z),
E_j
\Big\rangle_z
\,dt.
\end{aligned}
\]
Because the basis $\{E_1,\ldots,E_d\}$ is orthonormal,
\[
\|u_j(t)\|_z
\leq
\|\Delta_z(y,z)\|_z.
\]
Moreover, since $x^\star$ is an interior minimizer,
\[
\operatorname{grad}f(x^\star)=0,
\]
and $L$-smoothness gives
\[
\begin{aligned}
\|\operatorname{grad}f(z)\|_z
&=
\left\|
\operatorname{grad}f(z)
-
P_{x^\star\to z}\operatorname{grad}f(x^\star)
\right\|_z
\\
&\leq
L\,d(z,x^\star)
\leq
LD.
\end{aligned}
\]
Using \eqref{eq:exp-distortion-constants}, we therefore obtain
\[
\begin{aligned}
\left|
a_j(y,z)
-
\left\langle
\operatorname{grad}f(z),E_j
\right\rangle_z
\right|
&\leq
LJ_D\|\Delta_z(y,z)\|_z
+
LDH_D\|\Delta_z(y,z)\|_z
\\
&=
L(J_D+DH_D)\|\Delta_z(y,z)\|_z
\\
&=
L\Lambda_D\|\Delta_z(y,z)\|_z
\\
&\leq
L\bar{\Lambda}_D\|\Delta_z(y,z)\|_z.
\end{aligned}
\]
Consequently,
\[
\begin{aligned}
\left\|
\mathcal{G}^{\mathrm{IA}}_f(y,z;z)
-
\operatorname{grad}f(z)
\right\|_z^2
&=
\sum_{j=1}^d
\left|
a_j(y,z)
-
\left\langle
\operatorname{grad}f(z),E_j
\right\rangle_z
\right|^2
\\
&\leq
dL^2\bar{\Lambda}_D^2
\|\Delta_z(y,z)\|_z^2,
\end{aligned}
\]
and hence
\begin{equation}
\label{eq:ia-gradient-error}
\left\|
\mathcal{G}^{\mathrm{IA}}_f(y,z;z)
-
\operatorname{grad}f(z)
\right\|_z
\leq
\sqrt{d}\,L\bar{\Lambda}_D
\|\Delta_z(y,z)\|_z.
\end{equation}
By the definition of the coordinate quotients,
\[
\begin{aligned}
\left\langle
\mathcal{G}^{\mathrm{IA}}_f(y,z;z),
\Log_z y
\right\rangle_z
&=
\sum_{j=1}^d a_j(y,z)s_j
\\
&=
\sum_{j=1}^d
\left[
f(w^{[j]})-f(w^{[j-1]})
\right]
\\
&=
f(y)-f(z).
\end{aligned}
\]
Hence the Riemannian Itoh--Abe discrete gradient satisfies the exact
centered chain rule.
Since
\[
\Delta_z(z,x)
=
-\Log_z x,
\]
we have
\[
\Delta_z(y,x)
=
\Delta_z(y,z)+\Delta_z(z,x).
\]
Strong geodesic convexity applied from $z$ to $x$ gives
\[
f(z)-f(x)
\leq
\left\langle
\operatorname{grad}f(z),
\Delta_z(z,x)
\right\rangle_z
-
\frac{\mu}{2}
\|\Delta_z(z,x)\|_z^2.
\]
Combining this inequality with the exact chain rule yields
\[
\begin{aligned}
f(y)-f(x)
\leq{}&
\left\langle
\mathcal{G}^{\mathrm{IA}}_f(y,z;z),
\Delta_z(y,x)
\right\rangle_z
\\
&+
\left\langle
\operatorname{grad}f(z)
-
\mathcal{G}^{\mathrm{IA}}_f(y,z;z),
\Delta_z(z,x)
\right\rangle_z
-
\frac{\mu}{2}
\|\Delta_z(z,x)\|_z^2.
\end{aligned}
\]
Using \eqref{eq:ia-gradient-error} and Young's inequality, for every
$0<\rho<\mu$,
\[
\begin{aligned}
&
\left\langle
\operatorname{grad}f(z)
-
\mathcal{G}^{\mathrm{IA}}_f(y,z;z),
\Delta_z(z,x)
\right\rangle_z
\\
&\leq
\sqrt{d}\,L\bar{\Lambda}_D
\|\Delta_z(y,z)\|_z
\|\Delta_z(z,x)\|_z
\\
&\leq
\frac{dL^2\bar{\Lambda}_D^2}{2\rho}
\|\Delta_z(y,z)\|_z^2
+
\frac{\rho}{2}
\|\Delta_z(z,x)\|_z^2.
\end{aligned}
\]
Therefore,
\[
\begin{aligned}
f(y)-f(x)
\leq{}&
\left\langle
\mathcal{G}^{\mathrm{IA}}_f(y,z;z),
\Delta_z(y,x)
\right\rangle_z
+
\frac{dL^2\bar{\Lambda}_D^2}{2\rho}
\|\Delta_z(y,z)\|_z^2
\\
&-
\frac{\mu-\rho}{2}
\|\Delta_z(z,x)\|_z^2.
\end{aligned}
\]
Thus
\[
(\alpha,\beta,\gamma)
=
\left(
\frac{dL^2\bar{\Lambda}_D^2}{2\rho},
\frac{\mu-\rho}{2},
0
\right).
\]

This completes the proof.
\end{proof}

\section{Strongly Convex Convergence Rates}
\label{app:optimal-strongly-convex-rates}

\begin{sidewaystable*}[p]
\centering
\caption{
Best admissible step sizes and the corresponding optimal linear
contraction factors for the structure-preserving Riemannian methods.
For the AVF and Itoh--Abe methods, the auxiliary parameter
$\rho\in(0,\mu)$ is also optimized.
}
\label{tab:best-step-rate}


\begin{minipage}{0.96\textheight}
\small
For the geodesic AVF method, for each fixed $\theta\in[0,1]$, define
\[
m_\theta
:=
\frac{
\mu
\sqrt{
L\bar{\Lambda}_D^2
\bigl[\theta^2+(1-\theta)^2\bigr]^2
+
4\mu\theta(1-\theta)
}
}{
\bar{\Lambda}_D\sqrt{L}\,
\bigl[\theta^2+(1-\theta)^2\bigr]
+
\sqrt{
L\bar{\Lambda}_D^2
\bigl[\theta^2+(1-\theta)^2\bigr]^2
+
4\mu\theta(1-\theta)
}
}.
\]
Furthermore, let
\[
\begin{aligned}
D_\theta
:={}&
\frac{L+m_\theta}{2}\theta(1-\theta)
+\frac{L^2\bar{\Lambda}_D^2
[\theta^2+(1-\theta)^2]^2}{8(\mu-m_\theta)}
+\frac{m_\theta}{2}
[\zeta_D(1-\theta)^2-\theta-\delta_D\theta^2]
\\
&+
\Bigg\{
\Bigg[
\frac{L+m_\theta}{2}\theta(1-\theta)
+\frac{L^2\bar{\Lambda}_D^2
[\theta^2+(1-\theta)^2]^2}{8(\mu-m_\theta)}
+\frac{m_\theta}{2}
[\zeta_D(1-\theta)^2-\theta-\delta_D\theta^2]
\Bigg]^2
\\[-1mm]
&\qquad\qquad
+2m_\theta\theta
\Bigg[
(L+m_\theta)\theta(1-\theta)
+\frac{L^2\bar{\Lambda}_D^2
[\theta^2+(1-\theta)^2]^2}{4(\mu-m_\theta)}
+m_\theta(1-\theta)
[\delta_D\theta+\zeta_D(1-\theta)]
\Bigg]
\Bigg\}^{1/2}.
\end{aligned}
\]
\end{minipage}

\vspace{4mm}


\renewcommand{\arraystretch}{2.0}
\setlength{\tabcolsep}{7pt}
\small

\resizebox{\textheight}{!}{%
\begin{tabular}{@{}lcc@{}}
\toprule

\textbf{Method}
&
\textbf{Best step size $h_\star$}
&
\textbf{Best contraction factor $q_\star$}
\\

\midrule

Steepest descent
&
$\displaystyle
\frac{2}{L+\zeta_D\mu}
$
&
$\displaystyle
1-\frac{2\mu}{L+\zeta_D\mu}
$
\\[3mm]

Proximal point
&
$\displaystyle
\infty
$
&
$\displaystyle
0
$
\\[3mm]

Proximal gradient
&
$\displaystyle
\frac{
2
}{
(L-\rho_\psi)+\zeta_D(\mu-\rho_\psi)
}
$
&
$\displaystyle
1-
\frac{
2(\mu-\rho_\psi)
}{
(L-\rho_\psi)+\zeta_D(\mu-\rho_\psi)
}
$
\\[3mm]

Implicit midpoint
&
$\displaystyle
\frac{16}{
L+\mu(\zeta_D-\delta_D-1)
+
\sqrt{
\bigl[L+\mu(\zeta_D-\delta_D-1)\bigr]^2
+
16\mu
\bigl[L+\mu(\zeta_D+\delta_D+1)\bigr]
}
}
$
&
$\displaystyle
1-
\frac{
16\mu
}{
L+\mu(\zeta_D-\delta_D+7)
+
\sqrt{
\bigl[L+\mu(\zeta_D-\delta_D-1)\bigr]^2
+
16\mu
\bigl[L+\mu(\zeta_D+\delta_D+1)\bigr]
}
}
$
\\[5mm]

Geodesic AVF
&
$\displaystyle
\frac{2}{D_\theta}
$
&
$\displaystyle
1-
\frac{
2m_\theta
}{
D_\theta+2m_\theta\theta
}
$
\\[4mm]

Gonzalez
&
$\displaystyle
\frac{
32\mu
}{
L^2+(7+\zeta_D-\delta_D)\mu^2
+
\sqrt{
\bigl[L^2+(7+\zeta_D-\delta_D)\mu^2\bigr]^2
-
32(1-\delta_D)\mu^4
}
}
$
&
$\displaystyle
1-
\frac{
16\mu^2
}{
L^2+(7+\zeta_D-\delta_D)\mu^2
+
\sqrt{
\bigl[L^2+(7+\zeta_D-\delta_D)\mu^2\bigr]^2
-
32(1-\delta_D)\mu^4
}
}
$
\\[5mm]

Itoh--Abe
&
$\displaystyle
\frac{
4\mu
}{
4dL^2\bar{\Lambda}_D^2+\zeta_D\mu^2
}
$
&
$\displaystyle
1-
\frac{
2\mu^2
}{
4dL^2\bar{\Lambda}_D^2+\zeta_D\mu^2
}
$
\\[3mm]

\bottomrule
\end{tabular}%
}

\end{sidewaystable*}

\begin{table*}[t]
\centering
\caption{
Largest certified admissible stepsizes and the corresponding best
accelerated linear contraction factors for the structure-preserving
Riemannian methods in the geodesically strongly convex setting.
For the geodesic AVF and Itoh--Abe methods, the auxiliary parameter
$\rho\in(0,\mu)$ is also optimized.
}
\label{tab:optimal-accelerated-strongly-convex-rates}


\begin{minipage}{0.96\textwidth}
\raggedright
\small
Let $\xi \geq \zeta_D, \quad \delta_D+2+3 \xi-4 \zeta_D>0 $, and let $t_{\star}(\xi)$ denote the distortion
threshold defined in \eqref{eq:strong-distortion-threshold}.

For the geodesic AVF method, for each fixed
$\theta\in[0,1)$ satisfying
$1-2(1-\delta_D)\theta>0$, define
\[
\omega_\theta
:=
\frac{\theta^2+(1-\theta)^2}{2},
\qquad
N_\theta
:=
1-2(1-\delta_D)\theta.
\]
The remaining quantities used below are
\[
m_\theta
:=
\frac{
\mu\sqrt{
L\bar{\Lambda}_D^2\omega_\theta^2
+\mu\theta(1-\theta)
}
}{
\bar{\Lambda}_D\sqrt{L}\,\omega_\theta
+
\sqrt{
L\bar{\Lambda}_D^2\omega_\theta^2
+\mu\theta(1-\theta)
}
},
\]
and
\[
A_\theta
:=
L\theta(1-\theta)
+
m_\theta
\bigl[2\theta-(2-\delta_D)\theta^2\bigr]
+
\frac{
L^2\bar{\Lambda}_D^2\omega_\theta^2
}{
\mu-m_\theta
}.
\]

\end{minipage}

\vspace{3mm}


\renewcommand{\arraystretch}{2.0}
\setlength{\tabcolsep}{5pt}
\small

\begin{tabular*}{\textwidth}{
@{\extracolsep{\fill}}
lcc
}
\toprule

\textbf{Method}
&
\textbf{Best certified step size $h_\star$}
&
\textbf{Best accelerated convergence rate}
\\

\midrule

Steepest descent
&
$\displaystyle
\min\left\{
\frac{1}{L},
\frac{t_{\star}(\xi)^2}{\xi\mu}
\right\}
$
&
$\displaystyle
1-
\min\left\{
\sqrt{\frac{\mu}{\xi L}},
\frac{t_{\star}(\xi)}{\xi}
\right\}
$
\\[3mm]

Proximal point
&
$\displaystyle
\min\left\{
\frac{2\delta_D-1}{\mu\delta_D},
\frac{t_{\star}(\xi)^2}{\xi\mu}
\right\}
$
&
$\displaystyle
1-
\min\left\{
\sqrt{
\frac{2\delta_D-1}{\xi\delta_D}
},
\frac{t_{\star}(\xi)}{\xi}
\right\}
$
\\[3mm]

Proximal gradient
&
$\displaystyle
\min\left\{
\frac{1}{L-\rho_\psi},
\frac{t_{\star}(\xi)^2}
{\xi(\mu-\rho_\psi)}
\right\}
$
&
$\displaystyle
1-
\min\left\{
\sqrt{
\frac{\mu-\rho_\psi}
{\xi(L-\rho_\psi)}
},
\frac{t_{\star}(\xi)}{\xi}
\right\}
$
\\[3mm]

Implicit midpoint
&
$\displaystyle
\min\left\{
\frac{
4\delta_D
}{
L+(2+\delta_D)\mu
},
\frac{t_{\star}(\xi)^2}{\xi\mu}
\right\}
$
&
$\displaystyle
1-
\min\left\{
2\sqrt{
\frac{
\delta_D\mu
}{
\xi\bigl[L+(2+\delta_D)\mu\bigr]
}
},
\frac{t_{\star}(\xi)}{\xi}
\right\}
$
\\[4mm]

Geodesic AVF
&
$\displaystyle
\min\left\{
\frac{N_\theta}{A_\theta},
\frac{t_{\star}(\xi)^2}{\xi m_\theta}
\right\}
$
&
$\displaystyle
1-
\min\left\{
\sqrt{
\frac{
m_\theta N_\theta
}{
\xi A_\theta
}
},
\frac{t_{\star}(\xi)}{\xi}
\right\}
$
\\[4mm]

Gonzalez
&
$\displaystyle
\min\left\{
\frac{
8\delta_D\mu
}{
L^2+(2+\delta_D)\mu^2
},
\frac{
2t_{\star}(\xi)^2
}{
\xi\mu
}
\right\}
$
&
$\displaystyle
1-
\min\left\{
\frac{
2\mu\sqrt{\delta_D}
}{
\sqrt{
\xi\bigl[L^2+(2+\delta_D)\mu^2\bigr]
}
},
\frac{t_{\star}(\xi)}{\xi}
\right\}
$
\\[4mm]

Itoh--Abe
&
$\displaystyle
\min\left\{
\frac{
\mu
}{
2dL^2\bar{\Lambda}_D^2
},
\frac{
2t_{\star}(\xi)^2
}{
\xi\mu
}
\right\}
$
&
$\displaystyle
1-
\min\left\{
\frac{
\mu
}{
2\sqrt{\xi d}\,L\bar{\Lambda}_D
},
\frac{t_{\star}(\xi)}{\xi}
\right\}
$
\\[3mm]

\bottomrule
\end{tabular*}

\vspace{2mm}

\begin{minipage}{0.96\textwidth}
\raggedright
\small
For the geodesic AVF method,
$m_\theta=\mu-\rho_\theta^\star$ corresponds to the optimal choice of
$\rho$ for each fixed admissible $\theta$.
For the Itoh--Abe method, the optimal auxiliary parameter is
$\rho_\star=\mu/2$.
\end{minipage}

\end{table*}

\end{appendices}


\bibliography{sn-bibliography}

@inproceedings{Alimisis2020,
  title     = {A Continuous-time Perspective for Modeling Acceleration in Riemannian Optimization},
  author    = {Alimisis, Foivos and Orvieto, Antonio and B{\'e}cigneul, Gary and Lucchi, Aurelien},
  booktitle = {Proceedings of the Twenty Third International Conference on Artificial Intelligence and Statistics},
  pages     = {1297--1307},
  year      = {2020},
  volume    = {108},
  series    = {Proceedings of Machine Learning Research},
  publisher = {PMLR},
  address   = {Online}
}

@book{Lee2018,
  author    = {Lee, John M.},
  title     = {Introduction to Riemannian Manifolds},
  edition   = {2},
  series    = {Graduate Texts in Mathematics},
  volume    = {176},
  publisher = {Springer},
  address   = {Cham},
  year      = {2018},
  doi       = {10.1007/978-3-319-91755-9}
}

@inproceedings{KimYang2022,
  author    = {Kim, Jungbin and Yang, Insoon},
  title     = {Accelerated Gradient Methods for Geodesically Convex Optimization:
               Tractable Algorithms and Convergence Analysis},
  booktitle = {Proceedings of the 39th International Conference on Machine Learning},
  series    = {Proceedings of Machine Learning Research},
  volume    = {162},
  pages     = {11255--11282},
  year      = {2022},
  publisher = {PMLR},
  address   = {Baltimore, Maryland, USA}
}

@inproceedings{Ushiyama2023,
  author    = {Kansei Ushiyama and Shun Sato and Takayasu Matsuo},
  title     = {A Unified Discretization Framework for Differential Equation
               Approach with Lyapunov Arguments for Convex Optimization},
  booktitle = {Advances in Neural Information Processing Systems},
  volume    = {36},
  pages     = {26092--26120},
  year      = {2023},
  doi       = {10.52202/075280-1136}
}

@article{Celledoni2018,
  author  = {Elena Celledoni and S{\o}lve Eidnes and Brynjulf Owren
             and Torbj{\o}rn Ringholm},
  title   = {Dissipative Numerical Schemes on Riemannian Manifolds
             with Applications to Gradient Flows},
  journal = {SIAM Journal on Scientific Computing},
  volume  = {40},
  number  = {6},
  year    = {2018},
  doi     = {10.1137/18M1190628}
}

@article{Celledoni2020,
  author  = {Elena Celledoni and S{\o}lve Eidnes and Brynjulf Owren
             and Torbj{\o}rn Ringholm},
  title   = {Energy-Preserving Methods on Riemannian Manifolds},
  journal = {Mathematics of Computation},
  volume  = {89},
  number  = {322},
  pages   = {699--716},
  year    = {2020},
  doi     = {10.1090/mcom/3470}
}

@article{feng2025riemannian,
  title={A Riemannian Accelerated Proximal Gradient Method},
  author={Feng, Shuailing and Jiang, Yuhang and Huang, Wen and Ying, Shihui},
  journal={arXiv preprint arXiv:2509.21897},
  year={2025}
}

@inproceedings{zhang2016first,
  title={First-order methods for geodesically convex optimization},
  author={Zhang, Hongyi and Sra, Suvrit},
  booktitle={Conference on learning theory},
  pages={1617--1638},
  year={2016},
  organization={PMLR}
}

@inproceedings{zhang2018estimate,
  title={An estimate sequence for geodesically convex optimization},
  author={Zhang, Hongyi and Sra, Suvrit},
  booktitle={Conference On Learning Theory},
  pages={1703--1723},
  year={2018},
  organization={PMLR}
}

@article{liu2017accelerated,
  title={Accelerated first-order methods for geodesically convex optimization on Riemannian manifolds},
  author={Liu, Yuanyuan and Shang, Fanhua and Cheng, James and Cheng, Hong and Jiao, Licheng},
  journal={Advances in Neural Information Processing Systems},
  volume={30},
  year={2017}
}

\end{document}